\newcommand{\OI}{[0,1]}
\begin{document}
\title[Diophantine properties of measures]{Diophantine properties of measures invariant with respect to the Gauss map}

\author{Lior Fishman}
\address{University of North Texas, Department of Mathematics, 1155
  Union Circle \#311430, Denton, TX 76203-5017, USA} 
\email{Lior.Fishman@unt.edu}

\author{David Simmons}
\address{University of North Texas, Department of Mathematics, 1155
  Union Circle \#311430, Denton, TX 76203-5017, USA} 
\email{DavidSimmons@my.unt.edu}

\author{Mariusz Urba\'nski}
\address{University of North Texas, Department of Mathematics, 1155
  Union Circle \#311430, Denton, TX 76203-5017, USA} 
\email{urbanski@unt.edu  \newline \hspace*{0.3cm} Web:\!\!\!
www.math.unt.edu/$\sim$urbanski}

\begin{abstract}
Motivated by the work of D. Y. Kleinbock, E. Lindenstrauss, G. A. Margulis, and B. Weiss \cite{KLW, KM}, we explore the Diophantine properties of probability measures invariant under the Gauss map. Specifically, we prove that every such measure which has finite Lyapunov exponent is extremal, i.e. gives zero measure to the set of very well approximable numbers. We show on the other hand that there exist examples where the Lyapunov exponent is infinite and the invariant measure is not extremal. Finally, we construct a family of Ahlfors regular measures and prove a Khinchine-type theorem for these measures. The series whose convergence or divergence is used to determine whether or not $\mu$-almost every point is $\psi$-approximable is different from the series used for Lebesgue measure, so this theorem answers in the negative a question posed by Kleinbock, Lindenstrauss, and Weiss \cite{KLW}.
\end{abstract}
\maketitle

\section{Introduction} \label{sectionintroduction}

\begin{definition}
Let $\psi:\N\rightarrow(0,\infty)$ be any function. We recall that an irrational $x\in\OI$ is \emph{$\psi$-approximable} if there exist infinitely many $p/q\in \Q$ such that
\begin{equation}
\label{approximable}
\left|x - \frac{p}{q}\right| \leq \psi(q).
\end{equation}

We recall the following facts and definitions from the classical theory of Diophantine approximation:
\begin{itemize}
\item Every $x$ is $\psi$-approximable when $\psi(q) = q^{-2}$.
\item $x$ is \emph{badly approximable} if there exists $\varepsilon > 0$ such that $x$ is not $\psi$-approximable when $\psi(q) = \varepsilon q^{-2}$. The set of badly approximable numbers has Hausdorff dimension one but Lebesgue measure zero.
\item $x$ is \emph{very well approximable} if there exists $c > 0$ such that $x$ is $\psi$-approximable when $\psi(q) = q^{-(2 + c)}$. The set of very well approximable numbers has Hausdorff dimension one but Lebesgue measure zero.
\item $x$ is a \emph{Liouville} number if for all $c > 0$ the number $x$ is $\psi$-approximable when $\psi(q) = q^{-c}$. The set of Liouville numbers has Hausdorff dimension zero.
\end{itemize}
\end{definition}

{\bf Convention.} The symbols $\lesssim$, $\gtrsim$, and $\asymp$ will denote multiplicative asymptotics. For example, $A\lesssim B$ means that there exists a constant $C > 0$ (the \emph{implied constant}), such that $A\leq C B$.

\subsection{Extremal measures}
A measure\footnote{In this paper all measures are assumed to be Borel and locally finite.} $\mu$ on $\R$ is said to be \emph{extremal} if the set of very well approximable numbers is null with respect to $\mu$. In other words, $\mu$ behaves like Lebesgue measure with respect to very well approximable numbers. This definition was introduced by D. Y. Kleinbock, E. Lindenstrauss, and B. Weiss in \cite{KLW}, as a generalization of the notion of an extremal manifold, which was defined by V. Sprind\v{z}uk. B. Weiss \cite{Weiss} proved that measures which satisfy a certain decay condition, called \emph{absolutely decaying}, are extremal.

\begin{definition}
For $\alpha > 0$, a measure $\mu$ on $\R$ is said to be \emph{absolutely $\alpha$-decaying} if there exists $C > 0$
such that for all $x\in\R$, for all $0 < r \leq 1$ and for all $0 <
\varepsilon \leq 1$ we have 
\begin{equation}
\label{decay}
\mu(B(x,\varepsilon r)) \leq C\varepsilon^\alpha \mu(B(x,r)).
\end{equation}
It is said to be \emph{absolutely decaying} if it is absolutely $\alpha$-decaying for some $\alpha > 0$.
\end{definition}
We recall also that for $\delta > 0$, a measure $\mu$ on $\R$ is \emph{Ahlfors $\delta$-regular} if there exist positive constants $C_1$ and $C_2$ such that
\[
C_1 r^\delta \leq \mu(B(x,r)) \leq C_2 r^\delta
\]
for all $x$ in the topological support of $\mu$ and for all $0 < r \leq 1$. Examples of Ahlfors regular measures include Lebesgue measure and the Hausdorff measure on certain fractals such as the Cantor set. Clearly, any Ahlfors $\delta$-regular measure on $\R$ is automatically absolutely $\delta$-decaying.

Generalizations of Weiss's result to higher dimensions have been considered by Kleinbock, Lindenstrauss, and Weiss \cite{KLW}. However, for the purposes of this paper we will consider only Weiss's original result and not the higher dimensional generalizations.

Let $G:\OI\rightarrow\OI$ be the Gauss map, i.e. 
\begin{equation}
\label{Gauss}
G(x)=\begin{cases}
\frac1{x}-\lfloor 1/x\rfloor & x > 0\\
0 & x = 0
\end{cases},
\end{equation}
where $\lfloor x\rfloor$ is the integer part of $x$. A measure $\mu$ is \emph{invariant} with respect to the Gauss map if $\mu\circ G^{-1} = \mu$. In this paper we consider the extremality of probability measures invariant with respect to the Gauss map. Specifically, we show that if an invariant measure $\mu$ has finite Lyapunov exponent, then $\mu$ is extremal.
\begin{definition}
If $\mu$ is a probability measure on $\OI$ invariant with respect to the Gauss map, then the integral
\[
\chi_\mu(G)=\int\log|G'|d\mu
\]
is called the \emph{Lyapunov exponent} of the measure $\mu$ with respect to the Gauss map $G$.
\end{definition}

\begin{reptheorem}{theoremextremal}
If $\mu$ is a probability measure on $\OI\butnot\Q$ invariant with respect to the Gauss map $G$ with finite Lyapunov exponent $\chi_\mu(G)$, then $\mu$ is extremal.
\end{reptheorem}

The assumption that $\chi_\mu(G) < \infty$ is a very reasonable assumption which is satisfied for a large class of dynamically defined measures; see Section~\ref{sectionconformalextremal}. In particular there exist measures which satisfy this assumption but are not absolutely decaying. It is also a necessary assumption, as seen from the following:

\begin{reptheorem}{theoremcounterexample}
There exists a measure $\mu$ invariant with respect to the Gauss map which gives full measure to the Liouville numbers. In particular, $\mu$ is not extremal.
\end{reptheorem}

\subsection{A question about absolutely decaying measures}
In \cite{KLW}, Kleinbock, Lindenstrauss, and Weiss asked the following question:\footnote{Actually, Kleinbock, Lindenstrauss, and Weiss's question was about friendly measures on $\R^d$. When restricted to one dimension, friendly measures are the same as absolutely decaying measures (see Lemma 2.2 in \cite{KLW}).}
\begin{question}[Question 10.1 of \cite{KLW}]\label{questionKLW}
Suppose that $\mu$ is an absolutely decaying measure on $\R$.
\begin{itemize}
\item[(a)] Is it true that for any decreasing function $\psi:\N\rightarrow(0,+\infty)$, either the set of $\psi$-approximable numbers or its complement has $\mu$-measure $0$?
\item[(b)] Is it true that for all $\psi$ as in \textup{(a)}, $\mu$-almost every point is $\psi$-approximable if and only if
\begin{equation}
\label{KLW}
\sum_{q = 1}^\infty q\psi(q) = \infty ?\footnote{In this formula we have replaced $\psi(q)$ by $q\psi(q)$ due to a difference in the definition of $\psi$-approximability.}
\end{equation}
\end{itemize}
\end{question}
We answer this question in the negative by constructing a family of measures on $\R$ which are Ahlfors regular (and in particular absolutely decaying) and yet do not satisfy either (a) or (b).

To construct these measures, we fix a set $I\subset\N$ and let
\[
J_I = \{x\in\OI\butnot\Q:\text{ the continued fraction entries of $x$ lie in $I$}\}.
\]
\begin{reptheorem}{theoremconversestrong}
Fix an infinite set $I\subset\N$, and let $h$ be the Hausdorff dimension of $J_I$. Assume that the $h$-dimensional Hausdorff measure $\HH^h$ restricted to $J_I$ is Ahlfors $h$-regular. Let $\mu = \HH^h\given_{J_I}$, and let $\psi:\N\rightarrow(0,+\infty)$ be a function such that the function $q\mapsto q^2\psi(q)$ is nonincreasing. Then $\mu$-almost every (resp. $\mu$-almost no) point is $\psi$-approximable, assuming that the series
\begin{equation}
\tag{\ref{weiss}}
\sum_{q = 1}^\infty q^{2\alpha - 1}\psi(q)^\alpha
\end{equation}
diverges (resp. converges).
\end{reptheorem}
We note that the convergence case of Theorem \ref{theoremconversestrong} is a theorem of Weiss \cite{Weiss}, which he proved for any absolutely decaying measure $\mu$ and for any function $\psi:\N\rightarrow(0,+\infty)$.

Note that when $I = \N$, then $\mu$ is Lebesgue measure on $[0,1]$, and Theorem \ref{theoremconversestrong} reduces to the classical Khinchine theorem.

It appears that the only easy example of a set $I$ satisfying the hypotheses of Theorem \ref{theoremconversestrong} is the set $I = \N$. Nevertheless we will demonstrate the following:

\begin{reptheorem}{theoremexistence}
For every $0 < \delta \leq 1$ there exists an infinite set $I\subset\N$ such that $\HD(J_I) = \delta$ and such that $\HH^\delta\given_{J_I}$ is Ahlfors $\delta$-regular.\footnote{Here and from now on $\HD(S)$ denotes the Hausdorff dimension of a set $S$. $\HH^\delta(S)$ and $\PP^\delta(S)$ denote its $\delta$-dimensional Hausdorff and packing measure, respectively.}
\end{reptheorem}

Combining Theorems \ref{theoremconversestrong} and \ref{theoremexistence} in the obvious way yields the following corollary:

\begin{corollary}
\label{corollaryconverse}
For every $0 < \alpha \leq 1$, there exists an Ahlfors $\alpha$-regular, and therefore absolutely $\alpha$-decaying, measure $\mu$ such that for any function $\psi:\N\rightarrow(0,+\infty)$ such that the function $q\mapsto q^2\psi(q)$ is nonincreasing, then $\mu$-almost every (resp. $\mu$-almost no) point is $\psi$-approximable, assuming that the series \textup{(\ref{weiss})} diverges (resp. converges).
\end{corollary}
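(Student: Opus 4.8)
The plan is to deduce the corollary directly from Theorems~\ref{theoremconversestrong} and~\ref{theoremexistence}, so there is essentially no new work to do; I would simply assemble the logical chain carefully. Fix $0 < \alpha \leq 1$. First I would apply Theorem~\ref{theoremexistence} with $\delta = \alpha$ to produce an infinite set $I \subset \N$ with $\HD(J_I) = \alpha$ and such that $\HH^\alpha\given_{J_I}$ is Ahlfors $\alpha$-regular. Setting $\mu = \HH^\alpha\given_{J_I}$, the remark following the definition of Ahlfors regularity (that any Ahlfors $\delta$-regular measure on $\R$ is automatically absolutely $\delta$-decaying) shows that $\mu$ is absolutely $\alpha$-decaying, which is the first assertion of the corollary.

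For the second assertion, note that $\mu$ is of exactly the form considered in Theorem~\ref{theoremconversestrong}: the set $I$ is infinite, the Hausdorff dimension $h$ of $J_I$ equals $\alpha$, and $\HH^h\given_{J_I}$ is Ahlfors $h$-regular by construction. Hence Theorem~\ref{theoremconversestrong} applies verbatim to this $\mu$ and to any function $\psi:\N\rightarrow(0,+\infty)$ for which $q\mapsto q^2\psi(q)$ is nonincreasing --- which is precisely the hypothesis imposed on $\psi$ in the corollary --- and yields that $\mu$-almost every (resp. $\mu$-almost no) point is $\psi$-approximable according as the series~(\ref{weiss}), namely $\sum_{q=1}^\infty q^{2\alpha - 1}\psi(q)^\alpha$, diverges (resp. converges). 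This is the desired conclusion.

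The only points worth a second glance are bookkeeping ones: that the exponent of Ahlfors regularity coincides with the Hausdorff dimension of $J_I$ (so that ``$h$'' in Theorem~\ref{theoremconversestrong} and ``$\alpha$'' in Theorem~\ref{theoremexistence} and in the series~(\ref{weiss}) all denote the same number), and that the monotonicity hypothesis on $\psi$ is literally identical in the corollary and in Theorem~\ref{theoremconversestrong}. Neither presents any obstacle, so I do not expect a hard step here. All of the genuine difficulty resides in the proofs of Theorems~\ref{theoremconversestrong} and~\ref{theoremexistence} themselves, which I would establish separately; granting those, the corollary is immediate.
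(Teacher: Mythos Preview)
Your proposal is correct and matches the paper's own approach exactly: the paper states that the corollary follows by ``combining Theorems~\ref{theoremconversestrong} and~\ref{theoremexistence} in the obvious way'' without giving a separate proof, which is precisely the assembly you describe. Your added remarks about the bookkeeping (matching $h = \alpha$ and the identical monotonicity hypothesis) are accurate and, if anything, make the deduction more explicit than the paper does.
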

In the case $\alpha = 1$, the measure is simply Lebesgue measure.

\begin{remark}
It appears that (when $\alpha < 1$) this is the first example of a measure $\mu$ which is neither atomic nor absolutely continuous to Lebesgue for which a complete criterion has been given for when the set of $\psi$-approximable numbers is $\mu$-null or $\mu$-full.
\end{remark}

\begin{corollary}
\label{corollarynegativeanswer}
The answer to Question \ref{questionKLW} is negative (for both parts \textup{(a)} and \textup{(b)}).
\end{corollary}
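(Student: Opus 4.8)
The plan is to extract both failures from the family of measures furnished by Corollary~\ref{corollaryconverse}. Fix $\alpha\in(0,1)$, let $I$ be the infinite set and $\mu=\HH^\alpha\given_{J_I}$ the Ahlfors $\alpha$-regular (hence absolutely decaying) measure supplied by Theorem~\ref{theoremexistence}, and write $W(\psi)$ for the set of $\psi$-approximable numbers. The essential point is that the series $(\ref{weiss})$, $\sum_q q^{2\alpha-1}\psi(q)^\alpha$, which governs $\psi$-approximability for $\mu$, has different convergence behaviour from the series $(\ref{KLW})$, $\sum_q q\psi(q)$, as soon as $\alpha<1$.

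For part~\textup{(b)}, choose $s$ with $1<s<1/\alpha$ --- possible precisely because $\alpha<1$ --- and put $\psi(q)=q^{-2}(1+\log q)^{-s}$. Then $\psi$ is decreasing and $q\mapsto q^2\psi(q)=(1+\log q)^{-s}$ is nonincreasing, so Corollary~\ref{corollaryconverse} applies to $\psi$. Now
\[
\sum_{q}q\psi(q)=\sum_q q^{-1}(1+\log q)^{-s}<\infty
\]
since $s>1$, whereas
\[
\sum_q q^{2\alpha-1}\psi(q)^\alpha=\sum_q q^{-1}(1+\log q)^{-\alpha s}=\infty
\]
since $\alpha s<1$. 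By Corollary~\ref{corollaryconverse} the divergence of $(\ref{weiss})$ forces $\mu$-almost every point to be $\psi$-approximable, while the criterion in Question~\ref{questionKLW}\textup{(b)} would predict that $\mu$-almost no point is; so the equivalence there fails for this decreasing $\psi$. Running the same computation for varying $\alpha$ shows moreover that no normalising series $\sum_q c(q)\psi(q)$ independent of $\mu$ can repair this, so part~\textup{(b)} has a negative answer.

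For part~\textup{(a)} one must produce a decreasing $\psi$ with $0<\mu(W(\psi))<1$; note that the hypothesis of Theorem~\ref{theoremconversestrong} that $q\mapsto q^2\psi(q)$ be nonincreasing is now dropped, and this is what makes room for a counterexample. The construction takes $\psi$ decreasing but with $w(q):=q^2\psi(q)$ oscillating (slow multiplicative rises, sharp drops --- the only oscillation a decreasing $\psi$ allows): $w$ is comparable to a fixed small constant on a very sparse family of ``plateau'' windows of denominators $q$ and is pushed essentially to $0$ on the long ``valleys'' between them, chosen so deep that no rational at all $\psi$-approximates a point whose relevant denominator lies in a valley. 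For $x\in J_I$, a Legendre-type reduction to convergents together with $|x-p_n/q_n|\asymp q_n^{-2}a_{n+1}^{-1}$ turns $x\in W(\psi)$ into the condition that $a_{n+1}(x)\gtrsim 1/w(q_n(x))$ for infinitely many $n$ --- equivalently, that the orbit $\bigl(\log q_n(x)\bigr)_n$, which grows linearly in $n$ by the Birkhoff theorem applied to $\log|G'|$ (finite here since, by Ahlfors regularity, $\mu\{a_1=m\}\asymp m^{-2\alpha}$, whence $\chi_\mu(G)\asymp\sum_{m\in I}m^{-2\alpha}\log m<\infty$), enters the plateau windows infinitely often with a sufficiently large following partial quotient; the conditional probabilities of these events are estimated from the quasi-Bernoulli property and Ahlfors regularity of $\mu$.

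The main obstacle --- and the real content of this part --- is to place and shape the plateaus so that the resulting limsup set has $\mu$-measure strictly inside $(0,1)$. For such a $\psi$ the ratio $\psi(2q)/\psi(q)$ fails to stay comparable to $1$, so $W(\psi)$ is not, modulo $\mu$-null sets, a tail event; consequently the zero--one laws underlying Theorem~\ref{theoremconversestrong} no longer apply, and there is room to calibrate $\mu(W(\psi))$ through the interplay between the geometry (spacing and width) of the plateaus, the Lyapunov exponent $\chi_\mu(G)$, and the fluctuations of $\log q_n$ along $\mu$-typical orbits. Arranging the requisite correlations so that this measure is bounded away from both $0$ and $1$ is the delicate step. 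With parts~\textup{(a)} and~\textup{(b)} in hand, Corollary~\ref{corollarynegativeanswer} follows.
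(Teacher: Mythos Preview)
Your treatment of part~\textup{(b)} is correct and essentially identical to the paper's: you exhibit a decreasing $\psi$ for which the Khinchine series~(\ref{KLW}) converges while the series~(\ref{weiss}) diverges, and invoke Corollary~\ref{corollaryconverse}. The paper uses $\psi(q)=q^{-2}\log^{-1/\alpha}(q)$; your choice $\psi(q)=q^{-2}(1+\log q)^{-s}$ with $1<s<1/\alpha$ works for the same reason.

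Your treatment of part~\textup{(a)}, however, has a genuine gap. You attempt to produce, for the single Ahlfors regular measure $\mu=\HH^\alpha\given_{J_I}$, a decreasing $\psi$ with $0<\mu(W(\psi))<1$, by letting $q^2\psi(q)$ oscillate between plateaus and deep valleys. But you never carry out the calibration: the sentence ``Arranging the requisite correlations so that this measure is bounded away from both $0$ and $1$ is the delicate step'' is an admission that the crucial estimate is missing. Nothing in your outline rules out that every such $\psi$ still yields $\mu(W(\psi))\in\{0,1\}$; the observation that $W(\psi)$ is not a priori a tail event only removes one obstruction, it does not produce an example.

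The paper's argument for part~\textup{(a)} avoids this difficulty entirely by changing the measure rather than the function. Take the same $\psi$ as in part~\textup{(b)}, and for a Lebesgue-random $y\in\R$ set $\nu=\mu\circ(x\mapsto x+y)^{-1}$. As observed in \cite{KLW}, such a $\nu$ obeys the Lebesgue Khinchine dichotomy, so $\nu(W(\psi))=0$; meanwhile $\mu(W(\psi))=1$ by part~\textup{(b)}. The measure $\mu+\nu$ is still Ahlfors $\alpha$-regular (hence absolutely decaying), and for it neither $W(\psi)$ nor its complement is null. This two-line trick replaces your entire proposed construction; the idea you are missing is that Question~\ref{questionKLW}(a) asks about \emph{absolutely decaying} measures, not ergodic or $G$-invariant ones, so a sum of two translated copies is a legitimate counterexample.
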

\begin{proof}
Fix $0 < \alpha < 1$ and let $\mu$ be the measure guaranteed by Corollary \ref{corollaryconverse}. To see that the answer to (b) is negative, we merely note the existence of a function $\psi$ for which (\ref{KLW}) converges but (\ref{weiss}) diverges. For example,
\[
\psi(q) = \frac{1}{q^2\log^{1/\alpha}(q)}\cdot
\]

To see that the answer to (a) is negative, let $y\in\R$ be chosen at random with respect to Lebesgue measure. As noted in \cite{KLW} (see the paragraph immediately following Question 10.1), the measure $\nu := \mu\circ (x\mapsto x + y)^{-1}$ does satisfy (b) of Question \ref{questionKLW}. But then the measure $\mu + \nu$ is also Ahlfors regular, but does not satisfy (a); indeed, for the function $\psi$ given above, $\mu$-almost every point but $\nu$-almost no point is $\psi$-approximable.
\end{proof}

\ignore{
\begin{remark}
In addition to answering Question \ref{questionKLW}, Corollary \ref{corollaryconverse} can also be viewed as demonstrating the optimality of Theorem \ref{theoremweiss}. Indeed, the theorem shows that the series (\ref{weiss}) cannot be replaced by any series which converges for a larger class of functions $\psi$, even if the hypothesis of absolute $\alpha$-decay is replaced by the stronger hypothesis of Ahlfors $\alpha$-regularity.
\end{remark}

Weiss's result mentioned above regarding the extremality of absolutely decaying measures is in fact a corollary of the following Khinchine-type theorem:

\begin{theorem}[Weiss \cite{Weiss}] \label{theoremweiss}
Fix $0 < \alpha \leq 1$, and suppose that $\mu$ is an $\alpha$-absolutely decaying measure on $\R$. If $\psi:\N\rightarrow(0,+\infty)$ and if the series
\begin{equation}
\label{weiss}
\sum_{q = 1}^\infty q^{2\alpha - 1}\psi(q)^\alpha
\end{equation}
converges, then $\mu$-almost every point of $\R$ is not $\psi$-approximable.
\end{theorem}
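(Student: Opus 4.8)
The plan is to prove this by a Borel--Cantelli argument organized dyadically in the denominator, so that the spacing of the rationals is matched to the absolute decay of $\mu$. Since $\mu$ is locally finite and $\R$ is a countable union of bounded intervals, it suffices to fix a bounded interval and show that $\mu$-almost every point of it fails to be $\psi$-approximable; I therefore assume that $\mathrm{supp}(\mu)$ lies in a fixed bounded set of finite total mass. Writing the set of $\psi$-approximable points as the $\limsup$ of the resonant blocks
\[
E_k \ :=\ \bigcup_{2^k\le q<2^{k+1}}\ \bigcup_{\gcd(p,q)=1} B\!\left(\tfrac pq,\psi(q)\right),
\]
I observe that a ball contributes nothing unless it meets $\mathrm{supp}(\mu)$, and that every $\psi$-approximable point lies in infinitely many of the $E_k$. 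Hence it is enough to prove $\sum_k \mu(E_k) < \infty$ and invoke Borel--Cantelli.

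The core of the argument is the estimate on a single block. The crucial geometric input is that two distinct reduced fractions with denominators below $2^{k+1}$ are separated by at least $1/(qq') \gtrsim 4^{-k}$, so all centers occurring in $E_k$ are $\gtrsim 4^{-k}$-separated. On a block where $\psi(q) \le 4^{-k}$ I apply absolute $\alpha$-decay to pull each resonant ball up from radius $\psi(q)$ to the separation scale $4^{-k}$: after enlarging each ball by a factor of two so that it is centered at a point $y \in \mathrm{supp}(\mu)$, taking $r = 4^{-k}$ and $\varepsilon = 4^k\psi(q) \asymp q^2\psi(q) \le 1$ in \eqref{decay} gives
\[
\mu\!\left(B\!\left(\tfrac pq,\psi(q)\right)\right)\ \lesssim\ \bigl(q^2\psi(q)\bigr)^{\alpha}\,\mu\!\left(B\!\left(y,4^{-k}\right)\right).
\]
Because the enlarged balls $B(y,4^{-k})$ are centered at $\gtrsim 4^{-k}$-separated points, they have bounded overlap, so $\sum \mu(B(y,4^{-k})) \lesssim \mu(\R) \lesssim 1$ regardless of how many rationals are involved. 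Summing the displayed inequality over the block, and writing $\psi_k$ for the common size of $\psi(q)$ on $[2^k,2^{k+1})$, yields $\mu(E_k) \lesssim (4^k\psi_k)^{\alpha}$. To compare with the target series, note that the block contains $\asymp 2^k$ values of $q$, each contributing $\asymp 2^{k(2\alpha-1)}\psi_k^{\alpha}$ to $\sum_q q^{2\alpha-1}\psi(q)^{\alpha}$, so that block's total contribution is $\asymp (2^{2k}\psi_k)^{\alpha} = (4^k\psi_k)^{\alpha}$, exactly matching the bound on $\mu(E_k)$. Thus $\sum_k \mu(E_k) \lesssim \sum_q q^{2\alpha-1}\psi(q)^{\alpha} < \infty$, as required. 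Finitely many blocks on which $4^k\psi_k \gtrsim 1$ are harmless, since a finite union does not affect the $\limsup$, and convergence of the series forces these to be finite in number. When $\alpha = 1$ this reduces to the classical Khinchine convergence argument, with $\mathrm{Leb}(E_k) \asymp \sum_{q\sim 2^k} q\psi(q)$.

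The main obstacle is that the theorem is asserted for \emph{arbitrary} $\psi$, while the amortization above is efficient only when $\psi$ is roughly constant across each dyadic block. If $\psi$ varies wildly, the far pull-out to scale $4^{-k}$ is justified for a given $q$ only if its cost is shared among all $\asymp 4^k$ reduced fractions of that block; a single large value $\psi(q)$ carried by a sparse denominator gets pulled out too far, producing a factor $(q^2\psi(q))^{\alpha} \asymp q^{2\alpha}\psi(q)^{\alpha}$ rather than the desired $q^{2\alpha-1}\psi(q)^{\alpha}$, a loss of a full power of $q$. One cannot repair this by replacing $\psi$ with a monotone majorant, since an isolated large term forces the majorant to be $\gtrsim \psi(q_0)(q_0/q)^2$ for all $q \le q_0$ and thereby inflates the series by a factor $\asymp q_0\log q_0$. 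Reconciling the per-denominator separation scale $1/q$ with the per-block scale $1/q^2$ for non-monotone $\psi$ is therefore the hard part.

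To handle the general case I would pass, as Weiss does, through the Dani correspondence, reinterpreting $\psi$-approximability of $x$ in terms of excursions of the orbit of the associated unimodular lattice under the diagonal flow into a family of shrinking cusp neighborhoods. The absolute $\alpha$-decay of $\mu$ then feeds into a quantitative nondivergence estimate controlling the $\mu$-measure of the set of excursions up to a given time, and the Borel--Cantelli sum emerges in the scale-invariant form $\sum_q q^{-1}\bigl(q^2\psi(q)\bigr)^{\alpha}$, which is precisely \eqref{weiss}. The advantage of this formulation is that the logarithmic ($dq/q$) weighting is intrinsic to the flow parameter and accommodates arbitrary $\psi$ automatically, bypassing the monotonicity needed by the elementary dyadic argument. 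The remaining work is to verify that the quantitative nondivergence input is available under the hypothesis of absolute $\alpha$-decay alone and to track the exponent $\alpha$ through the cusp-measure estimates so that it lands on $q^{2\alpha-1}\psi(q)^{\alpha}$ and not on the weaker $q^{\alpha}\psi(q)^{\alpha}$.
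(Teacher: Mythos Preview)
The paper does not contain its own proof of this statement: the theorem is attributed to Weiss and merely cited, and in fact the statement you were given sits inside an \verb|\ignore{...}| block that was excised from the final text. So there is nothing in the paper to compare your argument against.

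On the substance of your proposal: the dyadic Borel--Cantelli argument you give is correct and is the standard route. Using the $4^{-k}$ separation of reduced fractions with denominators in $[2^k,2^{k+1})$, the absolute $\alpha$-decay pulls each resonant ball up to the separation scale, the bounded-overlap estimate controls the sum of the enlarged balls, and Cauchy condensation matches the resulting bound to the series \eqref{weiss}. This argument is complete once one assumes $\psi$ (or $q\mapsto q^2\psi(q)$) is nonincreasing, which is the only case the paper actually uses (namely the convergence half of Theorem~\ref{theoremconversestrong}). You have also correctly isolated why the amortization fails for arbitrary $\psi$.

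Where your proposal goes wrong is in the last paragraph: no argument---Dani correspondence or otherwise---can rescue the statement for \emph{arbitrary} $\psi$, because as written it is false. Lebesgue measure on $[0,1]$ is absolutely $\alpha$-decaying for every $0<\alpha\le 1$ (indeed $\varepsilon\le\varepsilon^\alpha$ for $\varepsilon\le 1$). Take $\alpha=\tfrac12$, let $S$ be a sparse set of primes with $\sum_{q\in S}q^{-1/2}<\infty$ (e.g.\ $S=\{p_{2^n}\}$), and set $\psi(q)=1/q$ for $q\in S$ and $\psi(q)=2^{-q}$ otherwise. Then
\[
\sum_{q\ge 1} q^{2\alpha-1}\psi(q)^\alpha \;=\; \sum_{q\ge 1}\psi(q)^{1/2}\;<\;\infty,
\]
yet for each prime $q\in S$ every $x\in(0,1)$ lies within $1/q=\psi(q)$ of some reduced $p/q$, so \emph{every} point is $\psi$-approximable. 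Thus the obstruction you diagnosed is not a technicality to be overcome by passing to the flow picture; it reflects a genuine failure of the theorem without a monotonicity hypothesis (or, alternatively, with the weaker series $\sum_q q^{\alpha}\psi(q)^{\alpha}$ in place of \eqref{weiss}). Your speculation that Weiss's dynamical argument ``accommodates arbitrary $\psi$ automatically'' is therefore mistaken; what survives for general $\psi$ is only the cruder per-denominator estimate $\mu(\bigcup_p B(p/q,\psi(q)))\lesssim (q\psi(q))^\alpha$, which is enough for extremality but not for the sharper series \eqref{weiss}.
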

Note that when $\mu$ is Lebesgue measure and $\alpha = 1$, then this theorem corresponds to the convergence part of the classical Khinchine theorem.

Fix $0 < \alpha \leq 1$, a measure $\mu$, and a function $\psi:\N\rightarrow(0,\infty)$. We shall say that \emph{the converse to Theorem~\ref{theoremweiss} holds} for $\alpha$, $\mu$, and $\psi$ if the divergence of the series (\ref{weiss}) implies that $\mu$-almost every point of $\R$ is $\psi$-approximable. Then the divergence part of the classical Khinchine theorem says precisely that if $\alpha = 1$, if $\mu$ is Lebesgue measure, and if $\psi$ is nonincreasing, then the converse to Theorem~\ref{theoremweiss} holds.

Let us note that it is possible to have an absolutely $\alpha$-decaying measure $\mu$ and a nonincreasing function $\psi$ for which the converse to Theorem~\ref{theoremweiss} does not hold. Indeed, any absolutely $\alpha$-decaying measure is also absolutely $\alpha'$-decaying for any $0 < \alpha' < \alpha$, but it is easy to find a function $\psi$ such that the series (\ref{weiss}) converges with $\alpha = \alpha$ and diverges with $\alpha = \alpha'$. Thus, if we are to find a converse to Theorem~\ref{theoremweiss}, then $\mu$ being absolutely $\alpha$-decaying is not the right hypothesis.
\begin{question}[Weiss, private communication] \label{questionweiss}
What is the right hypothesis?
\end{question}

To clarify the question further, we give the following answer, which was also known to Weiss:
\begin{answer}
\label{answertoweiss}
The right hypothesis cannot be a geometrical hypothesis.
\end{answer}
By a ``geometrical'' hypothesis, we mean a hypothesis which is invariant under translations, i.e. if $\mu$ satisfies the hypothesis then any translated version of $\mu$ also satisfies the hypothesis. For example, the hypothesis that $\mu$ is absolutely $\alpha$-decaying and the hypothesis that $\mu$ is Ahlfors $\alpha$-regular are both geometrical hypotheses. Answer~\ref{answertoweiss} can be restated more formally as follows:
\begin{theorem}
\label{theoremanswer1}
Let $\nu$ be a measure on $\R$ and fix $0 < \alpha < 1$. Then there exists $y\in\R$ and a nonincreasing function $\psi$ such that if
\[
\mu = \nu\circ(x\mapsto x + y)^{-1}
\]
then the converse to Theorem~\ref{theoremweiss} does not hold.
\end{theorem}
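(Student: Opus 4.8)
The plan is to exploit the fact that $\psi$-approximability of a point is extremely sensitive to translating the point, whereas the two arithmetic properties of $\psi$ that we will need---monotonicity together with simultaneous divergence of (\ref{weiss}) and convergence of $\sum_q q\psi(q)$---are insensitive to translating $\mu$. Accordingly I will choose $\psi$ depending only on $\alpha$, and only afterwards choose $y$ (depending on $\nu$ and $\psi$).

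\emph{Step 1: choosing $\psi$.} Because $\alpha<1$ we may fix an exponent $c$ with $1<c\le 1/\alpha$, and set $\psi(q)=q^{-2}(\log q)^{-c}$ for $q\ge 2$, with $\psi(1):=\psi(2)$; then $\psi$ is nonincreasing. Here $q\psi(q)=q^{-1}(\log q)^{-c}$, so $\sum_q q\psi(q)<\infty$ since $c>1$; on the other hand the series (\ref{weiss}) is $\sum_q q^{2\alpha-1}\psi(q)^{\alpha}=\sum_q q^{-1}(\log q)^{-c\alpha}$, which diverges since $c\alpha\le 1$. Thus the hypothesis ``(\ref{weiss}) diverges'' in the definition of ``the converse to Theorem~\ref{theoremweiss} holds'' is in force for \emph{every} measure $\mu$ and this fixed $\psi$, and the whole task reduces to exhibiting one translate $\mu$ of $\nu$ such that $\mu$-almost every point fails to be $\psi$-approximable.

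\emph{Step 2: choosing the translate.} Let $W_\psi\subset\R$ be the (Borel) set of $\psi$-approximable real numbers. Applying Theorem~\ref{theoremweiss} with $\alpha=1$ to Lebesgue measure $\lambda$---which is absolutely $1$-decaying---and using $\sum_q q\psi(q)<\infty$, we get $\lambda(W_\psi)=0$ (equivalently, this is just the convergence half of Khinchine's theorem via Borel--Cantelli). Assume $\nu\ne 0$ (for $\nu=0$ the statement is vacuous) and fix a compact interval $[a,b]$ with $0<\nu([a,b])<\infty$, available because $\nu$ is nonzero and locally finite. For $\mu=\nu\circ(x\mapsto x+y)^{-1}$ one has $\mu(A)=\nu(A-y)$, hence $\mu(\R\setminus W_\psi)=\nu\big(\R\setminus(W_\psi-y)\big)$, so it is enough to find $y$ with $\nu\big((W_\psi-y)\cap[a,b]\big)=0$. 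By Tonelli applied to the nonnegative Borel function $(x,y)\mapsto\mathbf 1_{W_\psi}(x+y)$ on $[a,b]\times\R$,
\[
\int_\R \nu\big((W_\psi-y)\cap[a,b]\big)\,dy=\int_{[a,b]}\Big(\int_\R \mathbf 1_{W_\psi}(x+y)\,dy\Big)d\nu(x)=\int_{[a,b]}\lambda(W_\psi)\,d\nu(x)=0 ;
\]
therefore $\nu\big((W_\psi-y)\cap[a,b]\big)=0$ for Lebesgue-a.e.\ $y$, and for any such $y$ we get $\mu(\R\setminus W_\psi)\ge\nu([a,b])>0$.

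\emph{Conclusion, and where the difficulty lies.} For this pair $(\mu,\psi)$ the series (\ref{weiss}) diverges while a set of positive $\mu$-measure consists of non-$\psi$-approximable numbers; in particular it is \emph{not} true that $\mu$-almost every point is $\psi$-approximable, so the converse to Theorem~\ref{theoremweiss} fails for $\alpha$, $\mu$, and $\psi$. I do not anticipate a genuine obstacle: the only things needing care are routine---that $\psi$ is honestly nonincreasing on all of $\N$ (the $q=1$ value handles this), the measurability justifying Tonelli, and the observation that $\alpha<1$ is used essentially (when $\alpha=1$ there is no $c\in(1,1/\alpha]$, in agreement with the fact that the converse to Theorem~\ref{theoremweiss} \emph{does} hold for Lebesgue measure). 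The real content is conceptual and lives in Step 1: a ``geometrical'' hypothesis on $\mu$ is translation-invariant, while the validity of the converse is not, so no geometrical hypothesis can be the ``right'' one.
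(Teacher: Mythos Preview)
Your proof is correct and follows essentially the same route as the paper's: choose $\psi(q)=q^{-2}(\log q)^{-c}$ so that the series \eqref{weiss} diverges at the given $\alpha$ while $\sum_q q\psi(q)$ converges, invoke the convergence half of Khinchine's theorem to get $\lambda(W_\psi)=0$, and then use Fubini/Tonelli to find a good translate. The paper takes $c=1/\alpha$ exactly, whereas you allow any $c\in(1,1/\alpha]$; you are also more careful about the $q=1$ value of $\psi$ and about localizing $\nu$ to a compact interval to justify Tonelli, which the paper glosses over.
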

\begin{proof}
Let $\psi$ be any function for which (\ref{weiss}) converges at $\alpha = 1$ but diverges at $\alpha = \alpha$. For example,
\[
\psi(q) = \frac{1}{q^2\log^{1/\alpha}(q)}\cdot
\]
By Khinchine's theorem, almost no point with respect to Lebesgue measure is $\psi$-approximable. By Fubini's theorem, for Lebesgue almost every $y\in\R$ and for $\nu$-almost every $x$, the point $x + y$ is $\psi$-approximable. This implies the existence of a number $y$ as stated in the theorem (in fact it implies the full measure of such $y$s).
\end{proof}
On the other hand, we give a positive answer to Question \ref{questionweiss} by exhibiting a large class of measures for which the converse to Theorem~\ref{theoremweiss} holds. To construct these, 
}

In the process of proving Theorems \ref{theoremconversestrong} and \ref{theoremexistence}, we establish the following criterion for determining whether $\HH^\alpha\given_{J_I}$ is Ahlfors regular. This improves more complicated criteria which can be found in \cite{MU1}.
\begin{reptheorem}{theoremcombinatorialcharacterization}[Abridged]
Fix an infinite set $I\subset\N$, and let $h = \HD(J_I)$. The following are equivalent:
\begin{itemize}
\item[(a)] $\HH^h(J_I) > 0$ and $\PP^h(J_I) < \infty$.
\item[(b1)] $\HH^h\given_{J_I}$ is Ahlfors $h$-regular.
\item[(c1)] For all $y\in I$ and $r\geq 1$
\[
\#(B(y,r)\cap I) \asymp r^h.
\]
\end{itemize}
\end{reptheorem}
Thus the Ahlfors regularity of $J_I$ is equivalent to the ``dual Ahlfors regularity'' of the generating set $I$.

Note that it is possible for (c1) to be satisfied for some $h\neq\HD(J_I)$. In such a case, the set $J_I$ is not Ahlfors regular.

The structure of the paper is as follows: In Section~\ref{sectiontheoremextremal}, we will prove Theorem~\ref{theoremextremal}. In Section~\ref{sectionIFS}, we will recall some basic definitions and theorems from the theory of conformal iterated function systems, which are needed to prove Theorems \ref{theoremcounterexample}, \ref{theoremcombinatorialcharacterization}, \ref{theoremconversestrong}, and \ref{theoremexistence}. In Section~\ref{sectionconformalextremal}, we will give some examples of measures which satisfy the hypotheses of Theorem~\ref{theoremextremal}, and we shall prove Theorem~\ref{theoremcounterexample}. In Section~\ref{sectioncombinatorialcharacterization} we will discuss various characterizations of Ahlfors regularity and semiregularity of $J_I$, and we shall prove Theorem~\ref{theoremcombinatorialcharacterization}. In Section~\ref{sectiontheoremconverse} we shall prove Theorem~\ref{theoremconversestrong} and in Section \ref{sectiontheoremexistence} we shall prove Theorem~\ref{theoremexistence}.

The interdependence of the sections is as follows: Section~\ref{sectionconformalextremal} depends on Sections \ref{sectiontheoremextremal} and \ref{sectionIFS}; Section~\ref{sectioncombinatorialcharacterization} depends on Section~\ref{sectionIFS}; Section~\ref{sectiontheoremconverse} depends on Sections \ref{sectiontheoremextremal}, \ref{sectionIFS}, and \ref{sectioncombinatorialcharacterization}; Section~\ref{sectiontheoremexistence} depends on \ref{sectionIFS} and \ref{sectioncombinatorialcharacterization}.

{\bf{Acknowledgments}}: The authors would like to thank both D. Y. Kleinbock and B. Weiss for reading the manuscript and making helpful suggestions. The third-named author was supported in part by the NSF Grant DMS 1001874.
\section{Proof of Theorem~\ref{theoremextremal}} \label{sectiontheoremextremal}
In this section we will prove the following theorem:

\begin{theorem}
\label{theoremextremal}
If $\mu$ is a probability measure on $\OI\butnot\Q$ invariant with respect to the Gauss map $G$ with finite Lyapunov exponent $\chi_\mu(G)$, then $\mu$ is extremal.
\end{theorem}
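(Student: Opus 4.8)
The plan is to exploit the connection between continued fractions and Diophantine approximation: an irrational $x$ is very well approximable if and only if its continued fraction denominators $q_n$ grow faster than exponentially in a suitable sense, i.e. $\limsup_n \frac{\log q_{n+1}}{\log q_n} > 1$, which is equivalent to $\limsup_n \frac{\log a_{n+1}}{\log q_n} > 0$ where $a_{n+1}$ is the $(n+1)$-st partial quotient. Since $|G'(x)| = 1/x^2$ and the $n$-th return of $x$ under $G$ has derivative comparable to $q_n^2$ (the standard estimate $|(G^n)'(x)| \asymp q_n^2$), the Lyapunov exponent controls precisely the exponential growth rate of $q_n$. So the first step is to recall or derive the relation $\frac1n \log q_n \to \frac12 \chi_\mu(G)$ for $\mu$-almost every $x$, via the Birkhoff ergodic theorem applied to $\log|G'|$ — but since $\mu$ need not be ergodic, I would instead apply Birkhoff directly to get that $\frac1n \sum_{k=0}^{n-1}\log|G'(G^k x)|$ converges $\mu$-a.e. to a finite limit $\chi(x)$ with $\int \chi \, d\mu = \chi_\mu(G) < \infty$ (using the $L^1$ hypothesis). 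Equivalently $\frac1n \log|(G^n)'(x)|$ converges a.e. to a finite value, hence $\frac1n \log q_n$ is bounded $\mu$-a.e.

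The second step is to translate "very well approximable" into a statement about the growth of a single partial quotient relative to the denominator. The key classical fact: writing $x = [0;a_1,a_2,\dots]$ with convergents $p_n/q_n$, we have $\left|x - \frac{p_n}{q_n}\right| \asymp \frac{1}{q_n q_{n+1}} \asymp \frac{1}{a_{n+1} q_n^2}$, and moreover every sufficiently good rational approximation to $x$ is a convergent. Therefore $x$ is very well approximable (say with exponent $2+c$) iff $a_{n+1} \gtrsim q_n^{c}$ infinitely often, i.e. iff $\limsup_n \frac{\log a_{n+1}}{\log q_n} > 0$. The plan is then to show this $\limsup$ is $0$ for $\mu$-a.e. $x$. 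Since $a_{n+1} = \lfloor 1/G^n(x)\rfloor$ and $\log a_{n+1} \leq \log(1/G^n(x)) + O(1) \leq \tfrac12\log|G'(G^n x)| + O(1)$, it suffices to show $\frac{1}{\log q_n}\log|G'(G^n x)| \to 0$ $\mu$-a.e. — but $\log q_n \asymp n$ a.e. (from step one, provided $\chi_\mu(G) > 0$; the case $\chi_\mu(G)=0$ needs separate handling, though in fact $\chi_\mu(G) \geq 2\log\varphi$ by a classical lower bound so this is moot), so this reduces to showing $\frac1n \log|G'(G^n x)| \to 0$ $\mu$-a.e.

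The third step is therefore the crux: proving $\frac1n f(G^n x) \to 0$ $\mu$-a.e. where $f = \log|G'| \in L^1(\mu)$, $f \geq 0$. This is a standard consequence of the Borel–Cantelli lemma combined with invariance: for any $\epsilon > 0$, $\sum_n \mu(\{x : f(G^n x) > \epsilon n\}) = \sum_n \mu(\{f > \epsilon n\})$ by $G$-invariance, and $\sum_n \mu(\{f > \epsilon n\}) \leq \frac1\epsilon \int f\,d\mu < \infty$; hence by Borel–Cantelli, for $\mu$-a.e. $x$ one has $f(G^n x) \leq \epsilon n$ for all large $n$, and letting $\epsilon \to 0$ through a sequence gives $\frac1n f(G^n x) \to 0$ a.e. I expect this Borel–Cantelli argument to be the main conceptual ingredient, and it is exactly the place where finiteness of the Lyapunov exponent (i.e. $f \in L^1$) is used — without it the tail sum diverges and partial quotients can grow fast enough to produce Liouville numbers, consistent with Theorem~\ref{theoremcounterexample}. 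The remaining work is routine bookkeeping: assembling steps one through three to conclude that for $\mu$-a.e. $x$, $\frac{\log q_{n+1}}{\log q_n} \to 1$, hence $\left|x - p_n/q_n\right| \geq q_n^{-(2+c)}$ for all large $n$ and any fixed $c>0$, and then invoking the fact that non-convergent rationals approximate $x$ even worse, so $x$ is not very well approximable.
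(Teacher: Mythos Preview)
Your proposal is correct and follows essentially the same strategy as the paper: characterize very well approximable numbers via the growth of partial quotients relative to denominators, then use the $L^1$ hypothesis on $\log|G'|$ together with ergodic-theoretic input to show this growth is too slow almost everywhere. The only technical differences are that the paper first passes to the ergodic case by decomposition and then deduces $\frac{1}{n}\eta(G^n x)\to 0$ as an immediate consequence of Birkhoff convergence of the averages, whereas you avoid the ergodic reduction and obtain the same conclusion via the Borel--Cantelli argument; both routes are standard and equally short.
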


In the proof of Theorem~\ref{theoremextremal}, we will make use of a relation between the continued fraction expansion of an irrational $x\in\OI$ with its Diophantine properties, which we present as follows:

\begin{definition}
For a function $\psi:\N\rightarrow(0,\infty)$, let us say that $x$ is \emph{$\psi$-well approximable} if $x$ is $\varepsilon\psi$-approximable for every $\varepsilon > 0$.
\end{definition}
\begin{remark}
$\psi$-well approximability implies $\psi$-approximability but not vice-versa; for example, if $x$ is a badly approximable number and $\psi(q) = 1/q^2$ then $x$ is $\psi$-approximable but not $\psi$-well approximable.
\end{remark}

\begin{lemma}
\label{lemmaba}
Fix an irrational $x\in\OI$ and let $[0;\omega_0,\omega_1,\ldots]$ be the continued fraction expansion of $x$. Let $(p_n/q_n)_{n = 0}^\infty$ be the convergents of $x$. Let $\psi:\N\rightarrow(0,\infty)$ be a function satisfying
\[
\psi(q)\leq \frac{1}{q^2}
\]
for all $q$. Then $x$ is $\psi$-well approximable if and only if for every $K > 0$ there exist infinitely many $n\in\N$ such that
\begin{equation}
\label{badlyapproximable2}
\omega_n \geq K\phi(q_n),
\end{equation}
where
\[
\phi(q) := \frac{1}{q^2\psi(q)} \geq 1.
\]
\end{lemma}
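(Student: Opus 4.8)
The plan is to show that, for each $\varepsilon\in(0,1/2)$, the only rationals that can witness $\varepsilon\psi$-approximability of $x$ are the convergents of $x$, and then to translate the inequality $|x-p_n/q_n|\leq\varepsilon\psi(q_n)$ into an inequality on $\omega_n$ via the standard estimate $|x-p_n/q_n|\asymp 1/(\omega_n q_n^2)$.

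First I would record the reduction to convergents. Since $\psi(q)\leq 1/q^2$, any reduced fraction $p/q$ with $|x-p/q|\leq\varepsilon\psi(q)$ satisfies $|x-p/q|<\frac{1}{2q^2}$ whenever $\varepsilon<1/2$, and hence is a convergent of $x$ by the classical theorem on best approximations. Consequently, $x$ is $\psi$-well approximable if and only if for every $\varepsilon\in(0,1/2)$ there are infinitely many $n$ with $|x-p_n/q_n|\leq\varepsilon\psi(q_n)$: one direction is immediate (convergents are rationals), and for the other, distinct witnessing fractions correspond to distinct indices $n$, while $\varepsilon\psi$-approximability for all small $\varepsilon$ trivially gives it for all $\varepsilon>0$.

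Next I would invoke the elementary continued-fraction estimates. From the recursion $q_{n+1}=\omega_n q_n+q_{n-1}$ one gets $\omega_n q_n^2\leq q_n q_{n+1}\leq 2\omega_n q_n^2$, and combining this with $\frac{1}{q_n(q_n+q_{n+1})}<|x-p_n/q_n|<\frac{1}{q_n q_{n+1}}$ yields
\[
\frac{1}{4\,\omega_n q_n^2}<\left|x-\frac{p_n}{q_n}\right|<\frac{1}{\omega_n q_n^2}.
\]
Since $\psi(q_n)=1/(q_n^2\phi(q_n))$, the inequality $|x-p_n/q_n|\leq\varepsilon\psi(q_n)$ forces $\omega_n>\phi(q_n)/(4\varepsilon)$, whereas $\omega_n\geq\phi(q_n)/\varepsilon$ forces $|x-p_n/q_n|<\varepsilon\psi(q_n)$.

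Finally I would assemble the two implications. For the forward direction, given $K>0$ (and assuming, as we may, $K\geq 1$), apply the reduction with $\varepsilon=1/(4K)<1/2$ to obtain infinitely many $n$ with $|x-p_n/q_n|\leq\varepsilon\psi(q_n)$, hence $\omega_n>K\phi(q_n)$. For the reverse direction, given $\varepsilon\in(0,1/2)$, apply the hypothesis with $K=1/\varepsilon$ to obtain infinitely many $n$ with $\omega_n\geq\phi(q_n)/\varepsilon$, hence $|x-p_n/q_n|<\varepsilon\psi(q_n)$; by the reduction step this is exactly what is needed for $\psi$-well approximability. The only real work is the bookkeeping of constants and of the indexing convention tying $\omega_n$ to the pair $(q_n,q_{n+1})$ (and the standing convention that $p/q$ is written in lowest terms, which is what makes the ``witness $\Rightarrow$ convergent'' step clean); there is no substantive obstacle.
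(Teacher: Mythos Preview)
Your proposal is correct and follows essentially the same route as the paper: reduce to convergents via the $|x-p/q|<1/(2q^2)$ criterion, use the recursion $q_{n+1}=\omega_n q_n+q_{n-1}$ to get $|x-p_n/q_n|\asymp 1/(\omega_n q_n^2)$, and then trade $\varepsilon\leftrightarrow 1/K$. The only difference is that you track explicit constants (the factor $4$) where the paper writes the estimate as an asymptotic and absorbs the constant into~$\varepsilon$.
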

\begin{remark}
It is possible to deduce Lemma \ref{lemmaba} from Theorem 8.5 of \cite{Dlecturenotes}, which is proved in a similar manner, but we include the proof for completeness. The ideas of this proof may also be found in the proof of Theorem 32 in \cite{Khi}.
\end{remark}
\begin{proof}[Proof of Lemma \ref{lemmaba}]
Fix an irrational $x\in\OI$. We recall the following well-known facts (see e.g. \cite{Khi} or \cite{Bugeaud}):
\begin{itemize}
\item[(i)] If $p/q$ is a rational approximation of $x$ such that $|x - p/q| < 1/(2q^2)$, then $p/q$ is a convergent of $x$.
\item[(ii)] For every $n\in\N$ we have
\begin{equation}
\label{xpqbounds}
\frac{1}{q_n(q_n + q_{n + 1})} < \left|x - \frac{p_n}{q_n}\right| < \frac{1}{q_n q_{n + 1}}
\end{equation}
and
\begin{equation}
\label{qnrecursion}
q_{n + 1} = \omega_n q_n + q_{n - 1}.
\end{equation}
\end{itemize}
From (i), it follows that for any $0 < \varepsilon\leq 1/2$, (\ref{approximable}) cannot be satisfied for any $p/q$ which is not a convergent. Thus, we may restrict our attention to approximations of $x$ which are convergents. Fix $n\in\N$, and note that by (\ref{qnrecursion}) we have
\[
q_{n + 1} \asymp \omega_n q_n, \footnote{Here and from now on $\asymp$ denotes a multiplicative asymptotic.}
\]
and thus (\ref{xpqbounds}) implies
\[
\left|x - \frac{p_n}{q_n}\right| \asymp \frac{1}{q_n^2 \omega_n}\cdot
\]
Thus, $x$ is $\psi$-well approximable if and only if for every $\varepsilon > 0$ there exist infinitely many $n\in\N$ such that
\[
\frac{1}{q_n^2 \omega_n} \geq \varepsilon\psi(q_n) = \frac{\varepsilon}{q_n^2\phi(q_n)}\cdot
\]
(We are using the ``$\varepsilon$'' to absorb the constant coming from the asymptotic.) Rearranging and letting $K = 1/\varepsilon$ yields the lemma.
\end{proof}

\ignore{

\begin{corollary}\label{lemmabacor1} 
Fix an irrational $x\in\OI$. Suppose that $\phi$ is \emph{very slowly varying},
i.e. there exists $K>0$ such that for all $t>0$, we have
$\phi(t^2)\leq K\phi(t)$. Then $x$ is badly approximable with respect
to $\phi$ if and 
only if there exists $\Delta < \infty$ such that for all $n\in\N$, 
\[
\eta(G^n(x))\le\Delta + \Phi\left(\sum_{j = 0}^{n - 1}\eta(G^j(x))\right),
\]
where $\Phi(t) = \log(\phi(e^t))$.
\end{corollary}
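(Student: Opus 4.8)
The plan is to translate the analytic inequality in the corollary into a purely combinatorial statement about the partial quotients $\omega_n$ and the denominators $q_n$, after which it becomes a restatement of (the negation of) Lemma~\ref{lemmaba}. Throughout I write $\eta(x) = \log(1/x) = -\log x$, so that $\eta(G^n x) = \log(1/G^n x)$ and $\sum_{j=0}^{n-1}\eta(G^j x) = -\log\prod_{j=0}^{n-1}G^j x$. Recall that ``$x$ is badly approximable with respect to $\phi$'' means precisely that $x$ is \emph{not} $\psi$-well approximable, where $\phi(q) = 1/(q^2\psi(q))$; by Lemma~\ref{lemmaba} this negation says that there exists $K > 0$ with $\omega_n \le K\phi(q_n)$ for all $n$. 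Call this condition (A), and denote by (B) the condition in the corollary, namely the existence of $\Delta < \infty$ with $\eta(G^n x) \le \Delta + \Phi\big(\sum_{j=0}^{n-1}\eta(G^j x)\big)$ for all $n$. I would prove (A)$\Longleftrightarrow$(B).

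The main work is a dictionary consisting of two estimates, each valid up to a bounded additive error. First, since $1/G^n x = \omega_n + G^{n+1}x \in (\omega_n,\omega_n + 1)$, taking logarithms gives
\[
\eta(G^n x) = \log\omega_n + O(1),
\]
with error in $[0,\log 2]$. Second --- and this is the crux of the dictionary --- I would use the classical identity $\prod_{j=0}^{n-1}G^j x = |q_{n-1}x - p_{n-1}|$ (a one-line induction from $x = (p_n + G^n x\,p_{n-1})/(q_n + G^n x\,q_{n-1})$; see also \cite{Khi}), together with (\ref{xpqbounds}) multiplied by $q_{n-1}$, which yields $q_n < 1/|q_{n-1}x - p_{n-1}| \le 2q_n$. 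Hence
\[
S_n := \sum_{j=0}^{n-1}\eta(G^j x) = -\log|q_{n-1}x - p_{n-1}| = \log q_n + O(1),
\]
again with error in $[0,\log 2]$. Combined with the definition $\Phi(t) = \log\phi(e^t)$, which gives $\Phi(\log q_n) = \log\phi(q_n)$, these two estimates reduce both (A) and (B) to comparisons between $\log\omega_n$ and $\Phi(\log q_n)$.

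With the dictionary in hand both implications are short. For (A)$\Rightarrow$(B): from $\omega_n \le K\phi(q_n)$ one gets $\eta(G^n x) \le \log(2K) + \Phi(\log q_n)$, and since $\log q_n \le S_n$ and $\Phi$ is nondecreasing, $\Phi(\log q_n) \le \Phi(S_n)$; taking $\Delta = \log(2K)$ gives (B). For (B)$\Rightarrow$(A): from (B), $\log\omega_n \le \eta(G^n x) \le \Delta + \Phi(\log q_n + O(1))$. Here I would invoke very slow variation, which in terms of $\Phi$ reads $\Phi(2t) \le \log K_0 + \Phi(t)$. For all large $n$ we have $\log q_n + O(1) \le 2\log q_n$, so monotonicity and very slow variation give $\Phi(\log q_n + O(1)) \le \Phi(2\log q_n) \le \log K_0 + \log\phi(q_n)$. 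Thus $\omega_n \le e^{\Delta}K_0\,\phi(q_n)$ for all large $n$, and the finitely many remaining $n$ are absorbed by enlarging the constant, giving (A).

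The step I expect to be the genuine obstacle is pushing the bounded additive errors of the dictionary \emph{through} the function $\Phi$, which a priori may vary rapidly: an $O(1)$ shift in the argument of $\Phi$ could in principle change its value by an unbounded amount. This is exactly what the ``very slowly varying'' hypothesis is designed to prevent --- it forces doubling (hence any bounded additive shift) of the argument to cost at most an additive constant --- and it is the only place that hypothesis enters. I note that the argument also uses that $\Phi$ (equivalently $\phi$) is nondecreasing; this holds automatically in the cases of interest, where $\phi(q) = 1/(q^2\psi(q))$ with $q\mapsto q^2\psi(q)$ nonincreasing, and I would either impose it as a standing assumption or fold it into the meaning of ``very slowly varying''.
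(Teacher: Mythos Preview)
Your approach is correct and is precisely the intended one: the paper states this as a direct corollary of Lemma~\ref{lemmaba} without giving a separate proof (indeed, the statement is commented out of the final version), and your translation via $\omega_n \le K\phi(q_n)$ together with a comparison of $\sum_{j<n}\eta(G^j x)$ with $\log q_n$ is exactly how the paper handles the parallel Corollaries~\ref{corollaryvwa} and~\ref{corollaryliouville}.

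One point to flag: you redefine $\eta(x) = -\log x$, whereas the paper (Definition~\ref{definitionxieta}) takes $\eta = \log(1+\xi)$. These agree only up to $O(1)$ \emph{termwise}, so your sharp estimate $S_n = \log q_n + O(1)$, obtained from the product identity $\prod_{j<n} G^j x = |q_{n-1}x - p_{n-1}|$, does \emph{not} hold for the paper's $\eta$; with the paper's definition one only has the looser two-sided bound~(\ref{qnbounds}), namely $\log q_n \le S_n \le 2\log q_n + \log 2$. This does not damage the argument: in the direction (B)$\Rightarrow$(A) you already invoke very slow variation to pass from $\Phi(2\log q_n)$ to $\Phi(\log q_n)+O(1)$, and the same step (applied once more if necessary) absorbs the factor of~$2$ coming from~(\ref{qnbounds}). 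So the proof goes through verbatim with the paper's $\eta$ after this small adjustment; your route via the product identity is a legitimate and slightly slicker alternative to the paper's combinatorial bounds on $q_n$, but you should either note the discrepancy in definitions or rewrite the dictionary step using~(\ref{qnbounds}).
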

\noindent For example, for every $c > 0$ the function $\phi(t) = \log^c(t)$ is
very slowly varying.

}
\begin{definition}
\label{definitionxieta}
For $x\in\OI$, let
\[\xi(x) = \lfloor 1/x\rfloor\]
be the first entry in the continued fraction expansion of $x$, so that $\xi(G^n(x)) = \omega_n$ for all $n$. Let
\[\eta = \log(1 + \xi).\]
\end{definition}

\begin{corollary}
\label{corollaryvwa}
\noindent Fix an irrational $x\in\OI$ and let $[0;\omega_0,\omega_1,\ldots]$ be the continued fraction expansion of $x$. Then the following are equivalent:
\begin{itemize}
\item[(i)] The number $x$ is very well approximable. 
\item[(ii)] There exists $c > 0$ such that for infinitely many $n\in\N$,
\begin{equation}
\label{omegacomega}
\log(1 + \omega_n) \geq c\sum_{j = 0}^{n - 1}\log(1 + \omega_j),
\end{equation}
or equivalently,
\begin{equation}
\label{etaceta}
\eta(G^n(x)) \geq c\sum_{j = 0}^{n - 1}\eta(G^j(x)).
\end{equation}
\end{itemize}
\end{corollary}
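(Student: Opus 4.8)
The plan is to deduce the corollary from Lemma~\ref{lemmaba}, applied to the one-parameter family $\psi_c(q) = q^{-(2+c)}$ with $c > 0$, together with the elementary fact that the convergent denominators satisfy $\log q_n \asymp \sum_{j=0}^{n-1}\eta(G^j(x))$.

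First I would record that $x$ is very well approximable if and only if $x$ is $\psi_c$-well approximable for some $c > 0$. The implication ``$\Leftarrow$'' is trivial, since $\psi_c$-well approximability gives $\psi_c$-approximability on taking $\varepsilon = 1$. For ``$\Rightarrow$'', suppose $x$ is $\psi_c$-approximable and put $c' = c/2$; then any $p/q$ with $|x - p/q| \le q^{-(2+c)}$ also satisfies $|x - p/q| \le q^{-c'}q^{-(2+c')} \le \varepsilon q^{-(2+c')}$ as soon as $q \ge \varepsilon^{-1/c'}$, and infinitely many of the witnessing $p/q$ have arbitrarily large $q$ (only finitely many rationals with bounded denominator lie within distance $1$ of $x$), so $x$ is $\varepsilon\psi_{c'}$-approximable for every $\varepsilon > 0$, i.e. $\psi_{c'}$-well approximable.

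Next I would apply Lemma~\ref{lemmaba} to $\psi_c$: here $\phi(q) = 1/(q^2\psi_c(q)) = q^c \ge 1$, so the hypothesis is met, and the lemma says $x$ is $\psi_c$-well approximable iff for every $K > 0$ there are infinitely many $n$ with $\omega_n \ge Kq_n^c$. Combined with the previous paragraph, $x$ is very well approximable iff there is $c > 0$ with this property. A short quantifier manipulation then removes the ``$\forall K$'': if $c$ works for all $K$, then in particular $\omega_n \ge q_n^c$ infinitely often; conversely, if $\omega_n \ge q_n^c$ for infinitely many $n$, then $\omega_n \ge q_n^{c/2}\cdot q_n^{c/2} \ge Kq_n^{c/2}$ for those $n$ with $q_n$ large enough (using $q_n \to \infty$), so $c/2$ works for every $K$. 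Taking logarithms, $x$ is very well approximable iff there is $c > 0$ such that $\log\omega_n \ge c\log q_n$ for infinitely many $n$.

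Finally I would convert $\log q_n$ into $S_n := \sum_{j=0}^{n-1}\eta(G^j(x)) = \sum_{j=0}^{n-1}\log(1+\omega_j)$. From $q_{n+1} = \omega_n q_n + q_{n-1}$, the monotonicity $q_{n-1}\le q_n$, and $1 \le \omega_n$ (so $1+\omega_n \le 2\omega_n$), one gets $\frac{1}{2}(1+\omega_n)q_n \le q_{n+1}\le (1+\omega_n)q_n$, hence $2^{-n}\prod_{j<n}(1+\omega_j) \lesssim q_n \le \prod_{j<n}(1+\omega_j)$, i.e. $\log q_n \le S_n \le \log q_n + n\log 2 + O(1)$; since $q_{n+1}\ge q_n + q_{n-1}$ forces geometric (Fibonacci-type) growth of $q_n$, we have $n = O(\log q_n)$, and therefore $S_n \asymp \log q_n$ for all large $n$ with an absolute implied constant. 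Moreover, for large $n$ the inequality $\log\omega_n \ge c\log q_n$ forces $\omega_n \ge 2$, and then $\log\omega_n \le \log(1+\omega_n) \le 2\log\omega_n$, while conversely $\log(1+\omega_n) \ge c'S_n \ge c'\log q_n$ together with $\log\omega_n \ge \log(1+\omega_n) - \log 2$ gives $\log\omega_n \ge \frac{c'}{2}\log q_n$ for large $n$. Assembling these comparisons shows that ``$\log\omega_n \ge c\log q_n$ for infinitely many $n$'' is equivalent to ``$\log(1+\omega_n) \ge c'S_n$ for infinitely many $n$'' (with $c'$ depending only on $c$ and absolute constants), which is exactly (\ref{omegacomega}); and (\ref{etaceta}) is merely (\ref{omegacomega}) rewritten using $\eta = \log(1+\xi)$ and $\xi(G^n(x)) = \omega_n$. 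The only real work will be bookkeeping: tracking the absolute constants, discarding the finitely many small $n$ (harmless since the statement is about infinitely many $n$), and checking that the additive $O(n)$ and $O(1)$ errors are absorbed thanks to $n = O(\log q_n)$; I do not expect any substantive obstacle beyond this.
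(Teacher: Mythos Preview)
Your proposal is correct and follows essentially the same route as the paper: reduce very well approximability to $\psi_c$-well approximability, invoke Lemma~\ref{lemmaba} with $\phi(q)=q^c$, and then replace $\log q_n$ by $S_n=\sum_{j<n}\log(1+\omega_j)$ via a two-sided comparison. The only cosmetic difference is in that last comparison: the paper gets the lower bound $q_n\ge \tfrac{1}{\sqrt 2}\prod_{j<n}\sqrt{1+\omega_j}$ by an even/odd pairing trick, yielding directly $\tfrac12 S_n - \log\sqrt 2 \le \log q_n \le S_n$, whereas you obtain $S_n - n\log 2 \lesssim \log q_n \le S_n$ and then absorb the $n\log 2$ term using the Fibonacci-type growth $n=O(\log q_n)$; both arguments are valid and lead to the same conclusion.
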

Formula (\ref{etaceta}) will be more useful than (\ref{omegacomega}) for our ergodic theory purposes.
\begin{proof}
We first give some bounds for $q_n$ in terms of the continued fraction entries $\omega_0,\ldots,\omega_{n - 1}$. The upper bound is easy: the
recursion equation (\ref{qnrecursion}) implies that
\[
q_n \leq \prod_{j = 0}^{n - 1} (\omega_j + 1).
\]
In the other direction, we divide into cases according to whether $n$
is even or odd. If $n = 2k$ then 
\[
q_n \geq \prod_{j = 0}^{k - 1}(\omega_{2j} \omega_{2j + 1} + 1) \geq
\prod_{j = 0}^{n - 1} \sqrt{\omega_j + 1} 
\]
and if $n = 2k + 1$ then
\[
q_n \geq \omega_{n - 1} q_{n - 1} \geq \frac{1}{\sqrt 2}\prod_{j =
  0}^{n - 1} \sqrt{\omega_j + 1}. 
\]
Let $t\ge 0$. Taking logarithms, we can
rewrite the above inequalities as 
\begin{equation}
\label{qnbounds}
\frac{1}{2}\sum_{j = 0}^{n - 1} \eta(G^j(x)) - \log(\sqrt{2}) \leq
\log(q_n) \leq \sum_{j = 0}^{n - 1}\eta(G^j(x)).
\end{equation}
Now the corollary follows immediately from (\ref{qnbounds}) together with Lemma \ref{lemmaba}, and the following characterization of the set $\VWA$ of very well approximable numbers:
\begin{quote}
An irrational $x\in\OI$ is very well approximable if and only if there exists $c > 0$ such that it is $\psi$-well approximable where $\psi(q) = q^{-(2 + c)}$.
\end{quote}
\end{proof}

Using (\ref{qnbounds}) and Lemma~\ref{lemmaba}, we also deduce the following:
\begin{corollary}
\label{corollaryliouville}  
Fix an irrational $x\in\OI$. Then the following are equivalent:
\begin{itemize}
\item[(i)] $x$ is a Liouville number. 
\item[(ii)] For all $c > 0$, there exist infinitely many $n\in\N$ such that \textup{(\ref{etaceta})} holds.
\end{itemize}
\end{corollary}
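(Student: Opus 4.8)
The plan is to deduce Corollary~\ref{corollaryliouville} from Lemma~\ref{lemmaba} in exactly the same way that Corollary~\ref{corollaryvwa} was deduced, replacing the characterization of $\VWA$ by the analogous characterization of the Liouville numbers. Recall that $x$ is a Liouville number if and only if for every $c > 0$ the number $x$ is $\psi$-approximable with $\psi(q) = q^{-c}$; since for $c \geq 2$ the function $q^{-c} \leq q^{-2}$ already forces (by fact (i) in the proof of Lemma~\ref{lemmaba}) all relevant approximants to be convergents, and since being $\psi$-approximable for $\psi(q) = q^{-c}$ with large $c$ implies the same for smaller $c$, one checks that $x$ is Liouville if and only if for every $c > 0$ the number $x$ is $\psi$-\emph{well} approximable where $\psi(q) = q^{-(2+c)}$ (the passage from $\psi$-approximable to $\psi$-well approximable is free here because we get to let $c$ range over all positive reals). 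This is the Liouville analogue of the boxed characterization of $\VWA$ used in the previous proof.

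With this characterization in hand, fix $c > 0$ and apply Lemma~\ref{lemmaba} with $\psi(q) = q^{-(2+c)}$, so that $\phi(q) = q^c$ and $\log\phi(q_n) = c\log q_n$. The lemma says $x$ is $\psi$-well approximable if and only if for every $K > 0$ there are infinitely many $n$ with $\omega_n \geq K\,q_n^{c} = K e^{c\log q_n}$, i.e. with $\eta(G^n(x)) = \log(1+\omega_n) \geq \log K + c\log q_n$. Now substitute the two-sided bound (\ref{qnbounds}), namely $\tfrac12\sum_{j=0}^{n-1}\eta(G^j(x)) - \log\sqrt2 \leq \log q_n \leq \sum_{j=0}^{n-1}\eta(G^j(x))$. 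The upper bound on $\log q_n$ shows that (\ref{etaceta}) holding for some constant $c'$ infinitely often implies $\eta(G^n(x)) \geq c'\log q_n$ infinitely often, which is more than enough for $\psi$-well approximability at exponent parameter slightly below $c'$; the lower bound on $\log q_n$ shows the converse, up to adjusting the constant and absorbing the additive $\log\sqrt2$ and $\log K$ terms. Since in the definition of Liouville we quantify over \emph{all} $c > 0$, the multiplicative loss of a factor of $2$ (coming from the lower bound in (\ref{qnbounds})) and all additive constants are irrelevant: ``for all $c$, infinitely often $\eta(G^n(x)) \geq c\log q_n$'' is equivalent to ``for all $c$, infinitely often (\ref{etaceta}) holds''. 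This gives the equivalence of (i) and (ii).

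The only mildly delicate point — and the step I would expect to require the most care in writing — is handling the additive error terms and the factor $2$ when converting between the three quantities $\eta(G^n(x))$, $c\log q_n$, and $c\sum_{j=0}^{n-1}\eta(G^j(x))$, together with making sure the ``for all $c>0$'' quantifier genuinely lets us pass from $\psi$-approximable to $\psi$-well approximable (so that Lemma~\ref{lemmaba}, which is stated for $\psi$-well approximability, applies). Once one observes that every constant appearing is either multiplicative and bounded (hence absorbable by shrinking $c$) or additive (hence negligible against $c\sum_{j=0}^{n-1}\eta(G^j(x))\to\infty$ as $n\to\infty$, since the sum is unbounded unless $x$ is rational), the argument is a routine rephrasing. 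I would present it compactly, in parallel with the proof of Corollary~\ref{corollaryvwa}, rather than repeating the full computation.
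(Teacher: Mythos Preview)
Your proposal is correct and follows exactly the approach the paper takes: the paper simply states that the corollary is deduced from (\ref{qnbounds}) and Lemma~\ref{lemmaba}, and you have spelled out precisely that deduction, including the correct observation that the universal quantifier over $c>0$ absorbs the factor of $2$, the additive constants, and the passage from $\psi$-approximable to $\psi$-well approximable. There is nothing to add.
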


\begin{remark}
\label{remarkba}
The well-known fact that an irrational $x\in\OI$ is badly approximable if and only if its continued fraction entries are bounded is also a corollary of Lemma~\ref{lemmaba}.
\end{remark}

\begin{proof}[Proof of Theorem~\ref{theoremextremal}]
We first of all note that it suffices to consider the case where $\mu$ is ergodic with respect to $G$, since if $\chi_\mu$ is finite, then $\chi_\nu$ is finite for almost all measures $\nu$ in the ergodic decomposition of $\mu$. Since the set $\VWA$ is invariant with respect to the Gauss map, it follows that any ergodic measure must give either zero or full measure to $\VWA$.

Let $\mu$ be an ergodic invariant measure whose Lyapunov exponent $\chi_\mu = \int\log|G'|\dee\mu$ is finite. Let $\eta$ be as in Definition~\ref{definitionxieta}. Since $\eta(x)\asymp -2\log(x) = \log|G'(x)|$, it follows that $\int\eta\dee\mu$ is also finite. On the other hand, $\eta$ is a strictly positive function and so we have
\begin{equation}\label{intetabounds}
0 < \int\eta\dee\mu < \infty. 
\end{equation}
We claim that $\mu$ is extremal. Suppose to the contrary that $\mu$-almost every number $x\in\OI$ is
very well approximable. It then follows from
Corollary~\ref{corollaryvwa}, \eqref{intetabounds}, and the Birkhoff
ergodic theorem that for $\mu$-almost all such numbers $x$, we have that
\[
\liminf_{n\to\infty}\frac1n\sum_{j= 0}^{n - 1}\eta(G^j(x))
\le \frac1{c_x}\limsup_{n\to\infty}\frac1n\eta(G^n(x))
=0
\]
with some $c_x>0$ coming from Corollary~\ref{corollaryvwa}.
Invoking the Birkhoff ergodic theorem again, we conclude that
$\int\eta\dee\mu\le 0$. This contradiction finishes the proof.
\end{proof}

\section{Iterated function systems and conformal measures} \label{sectionIFS}
Our main example of a measure invariant with respect to the Gauss map will be the unique invariant probability measure absolutely continuous to a conformal measure associated with an iterated function system consisting of inverse branches of the Gauss map. In this section we recall the definitions and main theorems. All theorems in this section except for those in Subsection~\ref{subsectiontwolemmas} were proven first in \cite{MU1} and also in a more general context in \cite{MU2}.

\subsection{IFSs and continued fractions}
For each $i\in\N$, consider the map $g_i:[0,1]\rightarrow[0,1]$ defined by
\[
g_i(x) = \frac{1}{i + x}\cdot
\]
The map $g_i$ is an inverse branch of the Gauss map $G$. For any set $I\subset\N$, the collection of maps $\SS_I = \{g_i\}_{i\in I}$ is a conformal iterated function system (see \cite{MU1} or \cite{MU2} for the definition).

Given
$\omega=\omega_0\omega_1\omega_2\ldots\omega_{n - 1}\in\N^n$, let 
\[
g_\omega
:=g_{\omega_0}\circ g_{\omega_1}\ldots\circ g_{\omega_{n - 1}}:\OI\to\OI,
\]
so that
\[
g_\omega(x)
=\cfrac1{\omega_0+\cfrac1{\omega_1+\cfrac1{
  \ddots+\cfrac1{\omega_{n - 1}+x}}}}\;.
\]
In particular,
\[
g_\omega(0) = [0;\omega_0,\omega_1,\ldots,\omega_{n - 1}].
\]
Let
\[
J_I = \bigcap_{n\in\N}\bigcup_{\omega\in I^\N}g_\omega([0,1]).
\]
The set $J_I$ is called the \emph{limit set} of the IFS $\SS_I$.
It coincides with the set of all irrational numbers in $\OI$ whose continued
fraction entries all lie in $I$. If $I$ is infinite, then the set $\cl{J_I}\butnot J_I$ consists of the set of all rational numbers whose continued fraction entries all lie in $I$. Moreover, $J_I$ is forward invariant under
the Gauss map $G$, i.e.
\[
G(J_I)=J_I.
\]

\subsection{A formula for the Hausdorff dimension of $J_I$}
Fix $I\subset\N$. A famous formula of R. Bowen relates the Hausdorff dimension of $J_I$ to an invariant of the IFS $\SS_I$ called the topological pressure. Below we describe this invariant and state Bowen's formula.

Given $t\ge 0$, the following limit exists and is called the \emph{topological pressure} of the IFS $\SS_I$ at the parameter $t$:
\[
P_I(t):=\lim_{n\to\infty}\frac1n\log\sum_{\omega\in I^n}\|g_\omega'\|_\infty^t.
\]

\begin{theorem}[Bowen's formula; Theorem 4.2.13 of \cite{MU2}]\label{BowenFormula}
For any set $I\subseteq\N$,
\[
\HD(J_I) = \inf\{t\ge 0:P_I(t)\le 0\}.
\]
In particular, $\HD(J_I)$ is the unique zero of $P_I$ if such a zero exists.
\end{theorem}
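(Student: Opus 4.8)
The plan is to establish the two inequalities $\HD(J_I)\le h$ and $\HD(J_I)\ge h$ separately, where $h:=\inf\{t\ge 0:P_I(t)\le 0\}$, and then to read off the ``in particular'' clause. For the latter, the chain rule gives the clean identity $\|(g_i\circ g_j)'\|_\infty=(ij+1)^{-2}\le 1/4$ for all $i,j$, so grouping the factors of $(g_\omega)'$ into consecutive pairs yields the uniform contraction estimate $\|g_\omega'\|_\infty\le 4^{-\lfloor|\omega|/2\rfloor}$ for every finite word $\omega$. Consequently, for $0\le s<t$ with $P_I(s)<\infty$ we have $\sum_{\omega\in I^n}\|g_\omega'\|_\infty^{t}\le 4^{-\lfloor n/2\rfloor(t-s)}\sum_{\omega\in I^n}\|g_\omega'\|_\infty^{s}$, hence $P_I(t)\le P_I(s)-(t-s)\log 2$; thus $P_I$ is strictly decreasing wherever finite (and everywhere non-increasing and convex), so a zero of $P_I$, if it exists, is unique and equals $h$.

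For the upper bound, fix $t>h$, so that $P_I(t)<0$; then there are $\delta>0$ and $N$ with $\sum_{\omega\in I^n}\|g_\omega'\|_\infty^t\le e^{-\delta n}$ for all $n\ge N$. For each $n$ the cylinders $\{g_\omega([0,1])\}_{\omega\in I^n}$ cover $J_I$, and since $\SS_I$ is conformal and enjoys bounded distortion we have $\mathrm{diam}\,g_\omega([0,1])\asymp\|g_\omega'\|_\infty$; by the contraction estimate these diameters tend to $0$ uniformly in $\omega$ as $n\to\infty$, so for large $n$ the cover is admissible for $\HH^t_\varepsilon$. Hence
\[
\HH^t(J_I)\lesssim\liminf_{n\to\infty}\sum_{\omega\in I^n}\|g_\omega'\|_\infty^t\le\liminf_{n\to\infty}e^{-\delta n}=0,
\]
so $\HD(J_I)\le t$; letting $t\downarrow h$ gives $\HD(J_I)\le h$.

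For the lower bound I would first reduce to finite subsystems. Since $J_F\subseteq J_I$ we have $\HD(J_I)\ge\HD(J_F)$ for every $F\subseteq I$. Now fix $t<h$, so $P_I(t)>0$; as $a_m:=\sum_{\omega\in I^m}\|g_\omega'\|_\infty^t$ is submultiplicative, $P_I(t)=\inf_m\frac1m\log a_m$, so $a_m\ge e^{mP_I(t)}$ for all $m$ and hence $a_m$ exceeds any prescribed bound for a suitable $m$. By monotone convergence $\sum_{\omega\in F^m}\|g_\omega'\|_\infty^t\uparrow a_m$ as $F\uparrow I$, so some finite $F\subseteq I$ makes $\sum_{\omega\in F^m}\|g_\omega'\|_\infty^t$ large; and because bounded distortion makes $n\mapsto\sum_{\omega\in F^n}\|g_\omega'\|_\infty^t$ supermultiplicative up to a multiplicative constant, a single sufficiently large level forces $P_F(t)>0$, whence $h_F:=\inf\{s:P_F(s)\le0\}>t$ (for finite $F$, $P_F$ is finite, continuous and strictly decreasing). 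It therefore suffices to prove $\HD(J_F)\ge h_F$ for finite $F$ with $\#F\ge 2$ (if $\#F\le1$ then $J_F$ is empty or a single point and there is nothing to show). Here $P_F(0)=\log\#F>0$ and $P_F(s)\to-\infty$, so $h_F$ is the unique zero of $P_F$; by the classical Bowen--Ruelle thermodynamic formalism for finite conformal iterated function systems there is an $h_F$-conformal measure $m$ on $J_F$, i.e. $m(g_\omega(A))=\int_A|g_\omega'|^{h_F}\,dm$ for all Borel $A$ and all finite words $\omega$ over $F$. Bounded distortion gives $m(g_\omega([0,1]))\asymp\|g_\omega'\|_\infty^{h_F}\asymp(\mathrm{diam}\,g_\omega([0,1]))^{h_F}$, and the standard device of covering a ball $B(x,r)$ centred on $J_F$ by a uniformly bounded number of cylinders of diameter $\asymp r$ upgrades this to $m(B(x,r))\lesssim r^{h_F}$ for all small $r$. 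The mass distribution principle then yields $\HH^{h_F}(J_F)\gtrsim m(J_F)>0$, so $\HD(J_F)\ge h_F$. Thus for every $t<h$ there is a finite $F$ with $\HD(J_I)\ge\HD(J_F)\ge h_F>t$, hence $\HD(J_I)\ge h$.

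I expect the main obstacle to be the lower bound in the infinite case, and more precisely the two places where one must rule out a ``jump'' while exhausting $I$ by finite subalphabets: (i) that $P_I(t)>0$ is already witnessed by a finite subsystem, which rests on the near-supermultiplicativity of $\sum_{\omega\in F^n}\|g_\omega'\|_\infty^t$ furnished by bounded distortion; and (ii) for each finite $F$, constructing the conformal measure $m$ and verifying the upper regularity bound $m(B(x,r))\lesssim r^{h_F}$, which is exactly where the separation and bounded-distortion geometry of the continued-fraction cylinders enters. The upper bound, by contrast, requires nothing beyond the crude covering estimate and the uniform contraction.
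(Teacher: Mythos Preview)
The paper does not prove this theorem; it is quoted from \cite{MU2} (Theorem~4.2.13 there) and used as a black box. Your sketch is essentially the standard argument carried out in \cite{MU1,MU2}: a natural-cover estimate for the upper bound, and a reduction to finite subalphabets followed by the construction of a conformal measure and the mass distribution principle for the lower bound. The details you supply are correct---in particular the identity $\|(g_i\circ g_j)'\|_\infty=(ij+1)^{-2}\le 1/4$ and the resulting strict decrease $P_I(t)\le P_I(s)-(t-s)\log 2$---and the two obstacles you flag (passing from $P_I(t)>0$ to $P_F(t)>0$ for a finite $F$, and the upper regularity $m(B(x,r))\lesssim r^{h_F}$ of the conformal measure for finite $F$) are exactly the substantive steps in the published proof. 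One simplification worth noting within the present paper: the passage from $P_I(t)>0$ to $P_F(t)>0$ for some finite $F\subseteq I$ is immediate from Theorem~\ref{continuityofpressure}, which dispenses with your supermultiplicativity detour.
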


\noindent We will need also the following theorem:

\begin{theorem}[Theorem 2.1.5 of \cite{MU2}]\label{continuityofpressure}
Given $t\ge 0$, for each set $I\subseteq\N$,
\[
P_I(t)=\lim_{N\to\infty}P_{I\cap\{1,\ldots,N\}}(t).
\]
\end{theorem}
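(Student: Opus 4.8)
The plan is to deduce the statement from the (sub/super\nobreakdash-)multiplicativity of the partition functions. Fix $t\ge 0$ and, for $n\ge 1$ and $I\subseteq\N$, write
\[
Z_n(I):=\sum_{\omega\in I^n}\|g_\omega'\|_\infty^t,\qquad I_N:=I\cap\{1,\dots,N\}.
\]
Two elementary facts will drive everything. First, the chain rule gives $\|g_{\omega\tau}'\|_\infty\le\|g_\omega'\|_\infty\|g_\tau'\|_\infty$ for finite words $\omega,\tau$, whence $Z_{m+n}(I)\le Z_m(I)Z_n(I)$ and in particular $Z_n(I)\le Z_1(I)^n$. Second, the bounded distortion property of the Gauss IFS --- part of the definition of a conformal IFS, see \cite{MU1,MU2} --- provides a constant $C=C(t)\ge 1$ with
\[
C^{-1}Z_m(I)Z_n(I)\le Z_{m+n}(I)\qquad\text{for all }m,n\ge 1\text{ and all }I\subseteq\N .
\]
Iterating this lower bound gives $Z_{kn}(F)\ge C^{-(k-1)}Z_n(F)^k$ for every $k\ge 1$ whenever $Z_n(F)<\infty$; applying $\frac1{kn}\log(\cdot)$ and letting $k\to\infty$ (the limit defining $P_F(t)$ exists, hence is computed by this subsequence) yields the key inequality
\[
P_F(t)\ \ge\ \frac1n\log Z_n(F)-\frac{\log C}{n}\qquad\text{for every }n\ge 1 ,
\]
valid in particular for $F=I_N$.

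The inequality $\lim_{N\to\infty}P_{I_N}(t)\le P_I(t)$ is immediate, since $I_N\subseteq I$ gives $Z_n(I_N)\le Z_n(I)$ for all $n$, hence $P_{I_N}(t)\le P_I(t)$; as this sequence is nondecreasing in $N$, the limit $L:=\lim_{N\to\infty}P_{I_N}(t)$ exists and $L\le P_I(t)$. For the reverse inequality I would first note that $I_N\uparrow I$ forces $I_N^n\uparrow I^n$, so that for each fixed $n$ monotone convergence of series gives $Z_n(I_N)\uparrow Z_n(I)$ as $N\to\infty$ (with value $+\infty$ allowed). Feeding this into the key inequality with $F=I_N$,
\[
L\ =\ \lim_{N\to\infty}P_{I_N}(t)\ \ge\ \lim_{N\to\infty}\Bigl(\frac1n\log Z_n(I_N)-\frac{\log C}{n}\Bigr)\ =\ \frac1n\log Z_n(I)-\frac{\log C}{n}
\]
for every $n\ge 1$.

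Now let $n\to\infty$. If $P_I(t)<\infty$ then $Z_1(I)<\infty$ (otherwise $Z_n(I)=\infty$ for all $n$ by the lower multiplicativity bound, forcing $P_I(t)=\infty$), hence $Z_n(I)\le Z_1(I)^n<\infty$ for all $n$ and $\frac1n\log Z_n(I)\to P_I(t)$; passing to the limit in the last display gives $L\ge P_I(t)$. If $P_I(t)=+\infty$ then $Z_1(I)=+\infty$ (since $\frac1n\log Z_n(I)\le\log Z_1(I)$ would bound $P_I(t)$), and taking $n=1$ in the last display gives $L=+\infty=P_I(t)$. In either case $L=P_I(t)$, which is the assertion. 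There is no deep obstacle here --- the only genuine input is the bounded distortion of the Gauss IFS, already built into the conformal\nobreakdash-IFS framework recalled above --- but some care is required to interchange the two limits ($N\to\infty$ first, then $n\to\infty$) correctly and not to overlook the regime of small $t$ with $I$ infinite, where $P_I(t)=+\infty$ and $Z_1(I)=+\infty$; that is the part of the write\nobreakdash-up most prone to slips.
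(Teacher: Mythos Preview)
The paper does not supply its own proof of this statement; it is quoted verbatim as Theorem~2.1.5 of \cite{MU2} and used as a black box. Your argument is correct and is in fact the standard one: the two ingredients are submultiplicativity of $n\mapsto Z_n(I)$ (chain rule) and supermultiplicativity up to a bounded-distortion constant (the inequality $\|g_{\omega\tau}'\|_\infty\ge K^{-1}\|g_\omega'\|_\infty\|g_\tau'\|_\infty$, which follows from the distortion bound $\max|g_\omega'|/\min|g_\omega'|\le K$ recorded in the paper with $K=4$). From these, Fekete-type reasoning gives $P_F(t)\ge \tfrac1n\log Z_n(F)-\tfrac{\log C}{n}$ for finite $F$, and then monotone convergence in $N$ followed by the limit in $n$ handles both the finite-pressure and infinite-pressure cases cleanly, exactly as you wrote. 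This is essentially the proof one finds in \cite{MU1,MU2}; there is no genuine methodological difference to report.
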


\subsection{Conformal measures}
Conformal measures are an important tool for understanding the geometry of the limit set $J_I$. In many cases they coincide with either the normalized Hausdorff measure or the normalized packing measure.
\begin{definition}
\label{definitionconformal}
Fix $t\ge 0$ and $I\subset\N$. A probability measure $m$ on $\OI$ is called \emph{$t$-conformal} with respect to the iterated function system $\SS_I$ if $m(J_I)=1$ and if
\[
m(g_i(A))=\int_A|g_i'|^t\dee m
\]
for every Borel set $A\subseteq \OI$ and for every $i\in I$.
\end{definition}
\begin{definition}
\label{definitionregular}
Fix $I\subset\N$. The system $\SS_I$ is said to be \emph{regular} if there exists $t\geq 0$ such that $P_I(t) = 0$.
\end{definition}
\begin{proposition}[Theorem 4.2.9 of \cite{MU2}]
\label{propositionregularequivalent}
Fix $I\subset\N$. The following are equivalent:
\begin{itemize}
\item[(a)] The IFS $\SS_I$ is regular.
\item[(b)] There exists a measure $m$ and $t\geq 0$ such that $m$ is $t$-conformal.
\end{itemize}
Furthermore, in this case $m$ and $t$ are both unique, and $P(t) = 0$.
\end{proposition}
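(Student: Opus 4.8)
The plan is to realize the $t$-conformal measure as a fixed point of the dual of a transfer (Perron--Frobenius) operator, obtained by truncation from the classical finite-alphabet theory, and then to pin down both $t$ and $m$ via the pressure function. For $t\ge 0$ let $\mathcal{L}_t$ act on bounded continuous functions on $\OI$ by
\[
\mathcal{L}_t f(x)=\sum_{i\in I}|g_i'(x)|^t\,f(g_i(x)),
\]
and let $\mathcal{L}_t^*$ be its dual, acting on probability measures. Testing against functions supported in a single cylinder $g_i(\OI)$ decouples the defining sum, so that unwinding Definition~\ref{definitionconformal} shows: a probability measure $m$ with $m(J_I)=1$ is $t$-conformal if and only if $\mathcal{L}_t^*m=m$. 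Hence the task is to build a fixed point of $\mathcal{L}_t^*$ supported on $J_I$, and then to see it is unique.

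For (a)$\Rightarrow$(b): assume $P_I(t)=0$. First treat the finite subsystems $I_N:=I\cap\{1,\dots,N\}$, where $\mathcal{L}_t$ acts on $C(\OI)$ as a positive operator with a H\"older potential obeying the usual bounded-distortion estimates; the Ruelle--Perron--Frobenius theorem then yields a probability eigenmeasure $m_N$ with $\mathcal{L}_{t,I_N}^*m_N=\lambda_N m_N$ and $\lambda_N=e^{P_{I_N}(t)}$. Pass to a weak-$*$ subsequential limit $m$ of $(m_N)$. By Theorem~\ref{continuityofpressure}, $\lambda_N\to e^{P_I(t)}=1$; and since $P_I(t)=0<\infty$ forces $\sum_{i\in I}\|g_i'\|_\infty^t<\infty$, the tails $\sum_{i>N}|g_i'|^t$ are uniformly small, so one may pass to the limit in the eigen-equation and obtain $\mathcal{L}_t^*m=m$, which is the conformality identity. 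I expect the main obstacle to be the verification that $m(J_I)=1$, i.e.\ that no mass escapes from the limit set; this is exactly where regularity (in particular finiteness of the pressure) is needed. Iterating $\mathcal{L}_t^*m=m$ and using bounded distortion gives $m(g_\omega(\OI))\asymp\|g_\omega'\|_\infty^t$ for $\omega\in I^n$, so $\sum_{\omega\in I^n}m(g_\omega(\OI))\asymp\sum_{\omega\in I^n}\|g_\omega'\|_\infty^t$, a quantity bounded above and below uniformly in $n$ because $P_I(t)=0$; as the depth-$n$ cylinders have bounded overlap, this forces $m\!\left(\bigcap_n\bigcup_{\omega\in I^n}g_\omega(\OI)\right)=1$, and the countably many boundary rationals in $\cl{J_I}\butnot J_I$ carry no mass (each is a decreasing limit of finite unions of cylinders whose total $m$-mass, comparable to a tail of the convergent series $\sum_{i\in I}\|g_i'\|_\infty^t$, tends to $0$), so $m(J_I)=1$.

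For (b)$\Rightarrow$(a): run the cylinder computation in reverse. Given a $t$-conformal $m$, bounded distortion gives $m(g_\omega(\OI))=\int_{\OI}|g_\omega'|^t\,\dee m\asymp\|g_\omega'\|_\infty^t$ for every $\omega\in I^n$; the cylinders $\{g_\omega(\OI):\omega\in I^n\}$ have pairwise disjoint interiors and cover $J_I$, and their shared endpoints are rational hence $m$-null, so $\sum_{\omega\in I^n}m(g_\omega(\OI))=m(J_I)=1$ and therefore $\sum_{\omega\in I^n}\|g_\omega'\|_\infty^t\asymp1$. Applying $\tfrac1n\log(\cdot)$ and letting $n\to\infty$ gives $P_I(t)=0$, which is simultaneously the regularity of $\SS_I$ and the final assertion $P(t)=0$.

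For uniqueness: the computation just given shows that any exponent $t'$ admitting a $t'$-conformal measure satisfies $P_I(t')=0$, and since $t\mapsto P_I(t)$ is strictly decreasing on the interval where it is finite, $t$ is unique. Given $t$, if $m_1,m_2$ are both $t$-conformal then both obey $m_j(g_\omega(\OI))\asymp\|g_\omega'\|_\infty^t$ with the same implied constants, so by a density (Vitali) argument $m_1$ and $m_2$ are mutually absolutely continuous with Radon--Nikodym derivative bounded above and below; since the $t$-conformal measure is ergodic with respect to the shift (a standard consequence of the construction, inherited from the spectral gap of $\mathcal{L}_t$ on the finite subsystems), that derivative is constant, whence $m_1=m_2$.
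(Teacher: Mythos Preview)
The paper does not prove this proposition at all: it is quoted verbatim as Theorem~4.2.9 of \cite{MU2}, and the surrounding text of Section~\ref{sectionIFS} explicitly says that all results there (outside Subsection~\ref{subsectiontwolemmas}) are imported from \cite{MU1} and \cite{MU2}. So there is no ``paper's own proof'' to compare your attempt against.

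That said, your sketch is a reasonable outline of the standard thermodynamic-formalism argument that one finds in \cite{MU2}: build eigenmeasures for the transfer operator on finite subsystems via Ruelle--Perron--Frobenius, pass to a weak-$*$ limit using Theorem~\ref{continuityofpressure}, and control mass escape via bounded distortion and the summability $\sum_i\|g_i'\|_\infty^t<\infty$ forced by $P_I(t)<\infty$. The (b)$\Rightarrow$(a) direction and the uniqueness of $t$ via strict monotonicity of the pressure are correct. One point to tighten: your uniqueness argument for $m$ appeals to ergodicity ``inherited from the spectral gap of $\mathcal{L}_t$ on the finite subsystems,'' but a spectral gap does not in general survive a weak-$*$ limit to an infinite alphabet; in \cite{MU2} ergodicity (indeed exactness) of the conformal measure is established directly for the infinite system, and that is what one should invoke here.
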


\begin{corollary}\label{corollaryconformal}
Fix $t\geq 0$ and $I\subset\N$. Then if $P(t) = 0$, then there exists a measure $m$ which is $t$-conformal.
\end{corollary}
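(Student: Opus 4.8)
The plan is to deduce this directly from Proposition~\ref{propositionregularequivalent} together with the uniqueness clause in Bowen's formula (Theorem~\ref{BowenFormula}). First I would observe that the hypothesis $P_I(t)=0$ is, by Definition~\ref{definitionregular}, precisely the statement that the IFS $\SS_I$ is regular. Hence the equivalence (a)$\Leftrightarrow$(b) of Proposition~\ref{propositionregularequivalent} applies, and it furnishes a probability measure $m$ and a parameter $t'\geq 0$ such that $m$ is $t'$-conformal with respect to $\SS_I$; moreover the ``furthermore'' part of that proposition tells us that $t'$ is the unique such parameter and that it satisfies $P_I(t')=0$.

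The only point that genuinely requires an argument is that the parameter $t'$ produced this way coincides with the $t$ given in the hypothesis. For this I would invoke Theorem~\ref{BowenFormula}: the function $P_I$ has a zero (at $t'$, and also at $t$ by hypothesis), so that theorem guarantees the zero is unique and equals $\HD(J_I)$. Consequently $t=t'=\HD(J_I)$, and therefore the measure $m$ obtained from Proposition~\ref{propositionregularequivalent} is in fact $t$-conformal, which is exactly the desired conclusion.

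There is essentially no serious obstacle here; the corollary is a bookkeeping consequence of results already stated, the one subtlety being the matching of the parameter $t$ in the hypothesis with the parameter that Proposition~\ref{propositionregularequivalent} produces, handled by the uniqueness of the zero of $P_I$. If one preferred to avoid quoting the uniqueness-of-zero clause of Bowen's formula explicitly, one could instead recall that $t\mapsto P_I(t)$ is non-increasing and strictly decreasing on the range where it is finite, which again forces the zero to be unique; but since Theorem~\ref{BowenFormula} already records this fact, the shortest route is simply to cite it.
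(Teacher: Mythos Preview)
Your argument is correct and follows essentially the same route as the paper: observe that $P_I(t)=0$ makes $\SS_I$ regular, invoke Proposition~\ref{propositionregularequivalent} to obtain a $t'$-conformal measure with $P_I(t')=0$, and then conclude $t=t'$ from uniqueness of the zero of $P_I$. The only cosmetic difference is that the paper cites the strict monotonicity of $P_I$ (Proposition~4.2.8(b) of \cite{MU2}) for this uniqueness, whereas you cite the uniqueness clause of Bowen's formula---which you yourself note is interchangeable with the monotonicity argument.
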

\begin{proof}
Since $P(t) = 0$, it follows that the IFS $\SS_I$ is regular, so by Proposition \ref{propositionregularequivalent}, there exists a measure $m$ and a number $t'\geq 0$ such that $m$ is $t'$-conformal and $P(t') = 0$. But since $P$ is strictly decreasing (part (b) of Proposition 4.2.8 of \cite{MU2}), we have $t = t'$, so $m$ is $t$-conformal.
\end{proof}

\begin{proposition}\label{propositioninvariantmeasure}
Fix $I\subset\N$, and suppose that the IFS $\SS_I$ is regular. Let $m_I$ be the unique conformal measure, and let $h = \HD(J_I)$, so that $m_I$ is $h$-conformal. Then there exists a unique Borel probability $G$-invariant measure $\mu_I$ on $J_I$ absolutely continuous with respect to $m_I$. This measure is ergodic and equivalent to $m_I$. The logarithm of the corresponding Radon-Nikodym derivative is a bounded function on $J_I$.
\end{proposition}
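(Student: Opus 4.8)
The plan is to realize $\mu_I$ as the invariant density of the Ruelle transfer operator associated to $\SS_I$ and then to read off all of its properties from the bounded distortion of the branches $g_\omega$. I would first introduce the operator $\mathcal{L} = \mathcal{L}_h$ acting on bounded Borel functions on $\cl{J_I}$ by
\[
\mathcal{L}f(x) = \sum_{i\in I}|g_i'(x)|^h f(g_i(x)),
\]
and observe that the defining equation of the $h$-conformal measure $m_I$ is equivalent to $\int\mathcal{L}f\dee m_I = \int f\dee m_I$ for all bounded Borel $f$ — summing over the essentially disjoint cylinders $g_i(\OI)$, whose union is $m_I$-conull — and, more generally, to $\int (f\circ G)\psi\dee m_I = \int f\,\mathcal{L}\psi\dee m_I$. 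In particular $(G^k)_*m_I$ has $m_I$-density $\mathcal{L}^k\mathbf{1}$, and a measure $\rho\,m_I$ is $G$-invariant precisely when $\mathcal{L}\rho = \rho$. So the problem reduces to producing a continuous $\rho$ on $\cl{J_I}$ with $\mathcal{L}\rho = \rho$, $\int\rho\dee m_I = 1$, and $0 < c \leq \rho \leq C < \infty$: then $\mu_I := \rho\,m_I$ is a $G$-invariant probability measure, equivalent to $m_I$, whose Radon--Nikodym derivative $\rho$ has bounded logarithm.

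The engine of the construction is the \emph{bounded distortion lemma}: there is $K \geq 1$, depending only on the Gauss system, with $|g_\omega'(x)| \leq K|g_\omega'(y)|$ for every $n$, every $\omega\in I^n$, and all $x,y\in\OI$. (This is standard for continued-fraction IFSs: $\bigl|\tfrac{\dee}{\dee x}\log|g_i'(x)|\bigr| \leq 2$ and $g_i\circ g_j$ contracts by a definite factor, so the telescoping sum estimating $\log|g_\omega'(x)| - \log|g_\omega'(y)|$ converges geometrically, uniformly in $n$.) Two consequences drive the argument. First, taking $f = \mathbf{1}$ above gives $\int\mathcal{L}\mathbf{1}\dee m_I = 1$, so $\mathcal{L}\mathbf{1} < \infty$ $m_I$-a.e.; but $\mathcal{L}\mathbf{1}(x) = \sum_{i\in I}(i+x)^{-2h}$ would be identically $+\infty$ if $\sum_{i\in I}i^{-2h} = \infty$, so in fact $\sum_{i\in I}\|g_i'\|_\infty^h < \infty$ — hence $\mathcal{L}$ maps $C(\cl{J_I})$ into itself with $\|\mathcal{L}f\|_\infty \leq \|\mathcal{L}\mathbf{1}\|_\infty\|f\|_\infty$. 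Second, bounded distortion gives $\mathcal{L}^n\mathbf{1}(x) \asymp \mathcal{L}^n\mathbf{1}(y)$ uniformly in $n,x,y$, which with $\int\mathcal{L}^n\mathbf{1}\dee m_I = 1$ forces $K^{-h} \leq \mathcal{L}^n\mathbf{1} \leq K^h$ for all $n$, and the same estimate term by term shows each $\mathcal{L}^n\mathbf{1}$ is Lipschitz with a constant independent of $n$. I would then form the Cesàro averages $\rho_n := \tfrac1n\sum_{k=0}^{n-1}\mathcal{L}^k\mathbf{1}$, which are uniformly bounded and uniformly Lipschitz, hence precompact in $C(\cl{J_I})$ by Arzelà--Ascoli. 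Any limit point $\rho$ satisfies $K^{-h} \leq \rho \leq K^h$ and $\int\rho\dee m_I = 1$, and $\|\mathcal{L}\rho_n - \rho_n\|_\infty = \tfrac1n\|\mathcal{L}^n\mathbf{1} - \mathbf{1}\|_\infty \to 0$ together with sup-norm continuity of $\mathcal{L}$ gives $\mathcal{L}\rho = \rho$. This produces $\mu_I$.

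Ergodicity and uniqueness remain. It suffices that $m_I$ be ergodic, since equivalent measures have the same invariant sets. Given a $G$-invariant Borel set $A$, the conformality relation and bounded distortion give $m_I(g_\omega(A)) \asymp \|g_\omega'\|_\infty^h\,m_I(A)$ and $m_I(g_\omega(\OI)) \asymp \|g_\omega'\|_\infty^h$, uniformly in $\omega$; since $G^{|\omega|}$ maps $g_\omega(\OI)$ onto $\OI$ and $A$ is $G$-invariant, $g_\omega(A) = A\cap g_\omega(\OI)$, so $m_I(A\cap g_\omega(\OI))/m_I(g_\omega(\OI))$ is comparable to $m_I(A)$ for \emph{every} cylinder. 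Running the Lebesgue-density (Knopp/Rényi) argument over the nested cylinders through a density point of $A^c$ then forces $m_I(A) \in \{0,1\}$. Hence $m_I$, and therefore $\mu_I$, is ergodic. Finally, if $\nu = \rho'\,m_I$ is any $G$-invariant probability absolutely continuous with respect to $m_I$, then $\mathcal{L}\rho' = \rho'$, so for each $t > 0$ the set $\{\rho' < t\rho\}$ is $G$-invariant and hence $m_I$-null or $m_I$-conull; letting $t$ range shows $\rho'/\rho$ is $m_I$-a.e. constant, and $\int\rho'\dee m_I = \int\rho\dee m_I = 1$ forces $\nu = \mu_I$ (in particular $\rho$ was unique, so the $\rho_n$ actually converge).

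I expect the main obstacle to be the non-compactness of $J_I$ caused by the infinite alphabet: in the finite-alphabet case the whole scheme is the classical Ruelle--Perron--Frobenius theorem, and the real content here is checking that the uniform bounds and equicontinuity needed for the Cesàro argument survive. They do, thanks to bounded distortion together with the point extracted from regularity that $\sum_{i\in I}\|g_i'\|_\infty^h < \infty$, which keeps $\mathcal{L}$ acting continuously on $C(\cl{J_I})$ with uniformly small tails; the same non-compactness must be watched in the density argument for ergodicity, but the cylinders still behave well under $m_I$ because of bounded distortion.
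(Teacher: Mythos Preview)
Your sketch is correct; it is the standard Ruelle--Perron--Frobenius construction for infinite conformal IFSs. The paper, however, does not prove this proposition at all: its entire proof is a citation to Theorem~2.4.3 and Corollary~2.7.5(c) of Mauldin--Urba\'nski's monograph \cite{MU2}. What you have written is, in outline, exactly the argument carried out there: build the transfer operator $\mathcal{L}_h$, use bounded distortion to get uniform two-sided bounds and equicontinuity for $\mathcal{L}_h^n\mathbf{1}$, extract a fixed density $\rho$ by Arzel\`a--Ascoli and Ces\`aro averaging, and obtain ergodicity by the R\'enyi-type density-point argument on cylinders. So you have not taken a different route from the paper; you have reconstructed the content of the reference the paper defers to.

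Two small remarks. First, your deduction that $\sum_{i\in I}\|g_i'\|_\infty^h < \infty$ from the existence of $m_I$ is fine, but it is even quicker from regularity itself: $P_I(h)=0$ forces the first-level sum $\sum_{i\in I}\|g_i'\|_\infty^h$ to be finite, since otherwise $P_I(h)=+\infty$. Second, in your uniqueness step the claim that $\{\rho' < t\rho\}$ is $G$-invariant is correct but deserves one line: writing $\nu = (\rho'/\rho)\,\mu_I$ with both $\nu$ and $\mu_I$ $G$-invariant forces $\rho'/\rho$ to be $G$-invariant $\mu_I$-a.e., which is what you use.
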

\begin{proof}
See Theorem 2.4.3 and part (c) of Corollary 2.7.5 in \cite{MU2}.
\end{proof}

\begin{proposition}
\label{propositionHausdorffpacking}~
\begin{itemize}
\item[(a)] If the $h_I$-dimensional Hausdorff measure of $J_I$ is positive (it is always finite), then the system $S_I$ is regular and we have
\[
m_I = \frac{\HH^{h_I}\given_{J_I}}{\HH^{h_I}(J_I)}\cdot
\]
\item[(b)] If the $h_I$-dimensional packing measure of $J_I$ is finite (it is always positive), then the system $S_I$ is regular and we have
\[
m_I = \frac{\PP^{h_I}\given_{J_I}}{\PP^{h_I}(J_I)}\cdot
\]
\end{itemize}
\end{proposition}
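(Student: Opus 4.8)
The plan is to reduce both parts to Proposition~\ref{propositionregularequivalent}: under the respective hypothesis I will exhibit a Borel probability measure on $J_I$ which is $h_I$-conformal with respect to $\SS_I$, and then that proposition yields at once that $\SS_I$ is regular, that the conformal measure $m_I$ exists and is unique, and hence (by uniqueness) that $m_I$ coincides with the measure I produced. In part (a) the candidate is the normalized Hausdorff measure $\nu := \HH^{h_I}\given_{J_I}/\HH^{h_I}(J_I)$, and in part (b) the normalized packing measure $\nu := \PP^{h_I}\given_{J_I}/\PP^{h_I}(J_I)$. In each case the stated hypothesis together with the parenthetical fact guarantees $0 < \HH^{h_I}(J_I) < \infty$ (resp. $0 < \PP^{h_I}(J_I) < \infty$), so that $\nu$ is a well-defined Borel probability measure carried by $J_I$.

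I would first dispose of the two parenthetical assertions. That $\HH^{h_I}(J_I) < \infty$ always and that $\PP^{h_I}(J_I) > 0$ always are standard properties of conformal iterated function systems with bounded distortion: the Hausdorff bound follows from covering $J_I$ by the generation-$n$ cylinders $g_\omega(\OI)$, whose diameters are comparable to $\|g_\omega'\|_\infty$, together with the fact that $P_I(h_I)\le 0$ coming from Bowen's formula (Theorem~\ref{BowenFormula}); the packing bound follows from producing a measure on $J_I$ satisfying $\nu(B(x,r))\lesssim r^{h_I}$ and invoking the mass distribution principle. In the infinite-alphabet case one first argues with the finite subsystems $\SS_{I\cap\{1,\ldots,N\}}$ and passes to the limit via Theorem~\ref{continuityofpressure}. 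Since all of this is recorded in \cite{MU1,MU2}, I would simply cite it rather than reprove it.

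The heart of the matter is to verify the conformality relation $\nu(g_i(A)) = \int_A |g_i'|^{h_I}\,\dee\nu$ for every $i\in I$ and every Borel $A\subseteq\OI$. Two observations do the work. The first is combinatorial: since $i\in I$, for an irrational $x = [0;x_1,x_2,\ldots]$ we have $g_i(x) = [0;i,x_1,x_2,\ldots]$, so $g_i(x)\in J_I$ if and only if $x\in J_I$; that is, $g_i^{-1}(J_I) = J_I$, and since $g_i$ is injective this gives $g_i(A)\cap J_I = g_i(A\cap J_I)$. The second is the conformal transformation rule for Hausdorff and for packing measure: each $g_i$ is a real-analytic (in particular $C^1$) diffeomorphism of an interval with nowhere-vanishing derivative, so $\HH^{s}(g_i(E)) = \int_E |g_i'|^{s}\,\dee\HH^{s}$ and $\PP^{s}(g_i(E)) = \int_E |g_i'|^{s}\,\dee\PP^{s}$ for every Borel $E\subseteq\OI$ and every $s>0$. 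Applying the appropriate rule with $E = A\cap J_I$, dividing by the normalizing constant, and using that $\nu$ is carried by $J_I$ yields the desired identity in both cases. Moreover the exponent is necessarily $h_I = \HD(J_I)$, since for any other $s$ the measures $\HH^s\given_{J_I}$ and $\PP^s\given_{J_I}$ are either $0$ or $\infty$ and could not have been normalized to a probability measure in the first place.

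With $\nu$ shown to be $h_I$-conformal, Proposition~\ref{propositionregularequivalent} closes both parts simultaneously: $\SS_I$ is regular, $m_I$ is the unique conformal measure, and $m_I = \nu$, which is exactly the asserted formula. The step I expect to demand the most care is the conformal transformation rule, particularly its packing-measure version: although it is standard for $C^1$ conformal maps of the line, it is the one place where genuine real analysis (rather than the soft formalism of Proposition~\ref{propositionregularequivalent}) enters, and essentially the same estimates underlie the parenthetical finiteness and positivity facts. By contrast, the identity $g_i^{-1}(J_I) = J_I$ and the bookkeeping with normalizing constants are routine.
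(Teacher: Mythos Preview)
Your proposal is correct and follows essentially the same route as the paper: verify that the normalized Hausdorff (resp.\ packing) measure is $h_I$-conformal via the change-of-variables formula for $C^1$ maps together with $g_i^{-1}(J_I)=J_I$, and then invoke Proposition~\ref{propositionregularequivalent} to conclude regularity and identify $m_I$. The paper's proof is simply a two-sentence sketch of exactly this argument, so your expanded version is a faithful elaboration rather than a different approach.
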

\begin{proof}
An argument analogous to the proof of the change of variables formula demonstrates that both of the above expressions are $h_I$-conformal. The proposition therefore follows from Proposition \ref{propositionregularequivalent}.
\end{proof}

\subsection{Regularity properties of the IFS $\SS_I$}
Fix $I\subset\N$. In this subsection we discuss properties of the IFS $\SS_I$ that are stronger than just regularity.

Let 
\[
\theta_I := \inf\{t\ge 0:P_I(t) < +\infty\}.
\]
We have the following simple characterization of the number $\theta_I$:

\begin{proposition}\label{theta}
\[
\theta_I
=\inf\{t\ge 0:\sum_{i\in I}\|g_i'\|_\infty^t<+\infty\}
=\inf\{t\ge 0:\sum_{i\in I}i^{-2t}<+\infty\}.
\]
\end{proposition}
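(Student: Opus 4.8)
The plan is to prove the two stated equalities by comparing the three sets
$\{t\ge 0:P_I(t)<+\infty\}$, $\{t\ge 0:\sum_{i\in I}\|g_i'\|_\infty^t<+\infty\}$, and $\{t\ge 0:\sum_{i\in I}i^{-2t}<+\infty\}$, and checking that they coincide; taking infima then gives the claim. The equality of the last two sets is immediate from the explicit form of the branches: since $g_i(x)=1/(i+x)$ we have $|g_i'(x)|=(i+x)^{-2}$, which is decreasing on $\OI$, so $\|g_i'\|_\infty\asymp i^{-2}$ (with equality when the supremum is over $\OI$), whence $\|g_i'\|_\infty^t\asymp i^{-2t}$ and the two series converge for exactly the same values of $t$. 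So the real content is to show that for each fixed $t\ge 0$, the series $\Sigma(t):=\sum_{i\in I}i^{-2t}$ converges if and only if $P_I(t)<+\infty$.

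For the "only if'' direction I would set $S_n(t):=\sum_{\omega\in I^n}\|g_\omega'\|_\infty^t$ and use the chain rule. Writing $g_\omega=g_{\omega_0}\circ g_{\sigma\omega}$, where $\sigma$ is the shift, and noting $g_{\sigma\omega}(\OI)\subseteq\OI$, an easy induction gives $\|g_\omega'\|_\infty\le\prod_{j=0}^{n-1}\|g_{\omega_j}'\|_\infty$. Hence $S_n(t)\le\bigl(\sum_{i\in I}\|g_i'\|_\infty^t\bigr)^n\asymp\Sigma(t)^n$, so if $\Sigma(t)<+\infty$ then $\tfrac1n\log S_n(t)$ is bounded above, and therefore $P_I(t)\le\log\Sigma(t)+O(1)<+\infty$.

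For the "if'' direction I would produce a matching lower bound on $S_n(t)$. Evaluating $g_\omega'$ at a single point $x_0\in\OI$ and applying the chain rule again, $|g_\omega'(x_0)|=\prod_{j=0}^{n-1}|g_{\omega_j}'(y_j)|$ for suitable points $y_j\in\OI$, and since $|g_{\omega_j}'(y)|=(\omega_j+y)^{-2}\ge(\omega_j+1)^{-2}$ for $y\in\OI$, we get $\|g_\omega'\|_\infty\ge\prod_{j=0}^{n-1}(\omega_j+1)^{-2}$. Summing over $\omega\in I^n$ gives $S_n(t)\ge\bigl(\sum_{i\in I}(i+1)^{-2t}\bigr)^n$. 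By the limit comparison test, $\sum_{i\in I}(i+1)^{-2t}$ diverges whenever $\Sigma(t)$ does, so in that case $S_n(t)=+\infty$ for every $n\ge1$, and hence the defining limit for $P_I(t)$ is $+\infty$. Combining the two directions shows the three sets coincide, and taking infima finishes the proof.

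The only mildly delicate point is the lower bound. The standard bounded-distortion estimate for a conformal IFS would only give $\|g_\omega'\|_\infty\asymp\prod_{j}\|g_{\omega_j}'\|_\infty$ up to a multiplicative constant of the form $K^n$, which is worthless for computing the exponential growth rate $P_I(t)$. We sidestep this entirely by using the explicit formula $|g_i'(x)|=(i+x)^{-2}$ for the continued-fraction branches, which yields a constant-free per-letter lower bound $(\omega_j+1)^{-2}$; and in fact we only ever need this crude one-sided bound, since all we have to detect is the dichotomy "$S_n(t)$ finite'' versus "$S_n(t)=+\infty$''. Everything else is routine bookkeeping.
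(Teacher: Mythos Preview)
Your argument is correct. The paper's own proof is essentially a citation: the first equality is quoted as part (a) of Proposition~4.2.8 of \cite{MU2} (a general fact about conformal iterated function systems), and the second equality is immediate from $\|g_i'\|_\infty = i^{-2}$. You instead give a self-contained proof of the first equality, exploiting the explicit formula $|g_i'(x)|=(i+x)^{-2}$ to get the two-sided bounds $\prod_j(\omega_j+1)^{-2}\le\|g_\omega'\|_\infty\le\prod_j\omega_j^{-2}$ directly. This buys independence from the general theory in \cite{MU2}, at the cost of being specific to the continued-fraction system; the paper's approach is shorter but relies on an external reference.

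One small remark: your closing discussion overstates the delicacy of the lower bound. Even the generic bounded-distortion estimate $\|g_\omega'\|_\infty\ge K^{-n}\prod_j\|g_{\omega_j}'\|_\infty$ would suffice here, since you only need to distinguish $S_n(t)<\infty$ from $S_n(t)=+\infty$: if $\sum_i(i+1)^{-2t}$ diverges then $S_n(t)=+\infty$ regardless of any finite multiplicative constant, and if it converges then a factor $K^n$ only shifts $\tfrac1n\log S_n(t)$ by $\log K$. So the constant-free per-letter bound you extract from the explicit formula is convenient but not essential.
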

\begin{proof}
The first equation is part (a) of Proposition 4.2.8 of \cite{MU2}; the second follows from the fact that $\|g_i'\|_\infty = i^{-2}$ for all $i\in\N$.
\end{proof}

\begin{definition}
Fix $I\subset\N$. The system $\SS_I$ is said to be \emph{strongly regular} if there exists $t\ge 0$ such that $0 < P_I(t) < +\infty$, and it is called \emph{cofinitely regular} (or \emph{hereditarily regular}) if $P_I(\theta_I) = +\infty$.
\end{definition}
We have the following:

\begin{proposition}[Theorem 4.3.5 of \cite{MU2}]\label{propositionregularity}~
\begin{itemize}
\item[(a)] Every cofinitely regular system is strongly regular and every strongly regular system is regular.
\item[(b)] For each strongly regular system $\SS_I$, we have $\HD(J_I) > \theta_I$.
\end{itemize}
\end{proposition}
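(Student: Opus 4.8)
The plan is to read everything off Bowen's formula (Theorem~\ref{BowenFormula}) together with the qualitative behaviour of the pressure function $P_I$ on $[0,\infty)$: it is non-increasing, and strictly decreasing on the region where it is finite (part (b) of Proposition 4.2.8 of \cite{MU2}); it is convex, hence continuous on the interior $(\theta_I,\infty)$ of its domain of finiteness; and — the delicate point — it is right-continuous at the left endpoint $\theta_I$ in the extended sense that
\[
\lim_{t\to\theta_I^+}P_I(t)=P_I(\theta_I)\in(-\infty,+\infty].
\]
I would establish this last fact first, since convexity and monotonicity alone would permit a downward jump at $\theta_I$. Put $I_N=I\cap\{1,\dots,N\}$; each $\SS_{I_N}$ is a finite system, so $P_{I_N}$ is finite and continuous on $[0,\infty)$, and the functions $P_{I_N}$ increase pointwise in $N$. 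Hence by Theorem~\ref{continuityofpressure} we have $P_I=\sup_N P_{I_N}$, so $P_I$ is lower semicontinuous on $[0,\infty)$, which gives $\liminf_{t\to\theta_I}P_I(t)\ge P_I(\theta_I)$; on the other hand monotonicity gives $P_I(t)\le P_I(\theta_I)$ for all $t>\theta_I$, hence $\limsup_{t\to\theta_I^+}P_I(t)\le P_I(\theta_I)$. Combining the two inequalities yields the claimed right-continuity.

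Granting this, part (a) is short. If $\SS_I$ is cofinitely regular then $P_I(\theta_I)=+\infty$, so right-continuity forces $P_I(t)\to+\infty$ as $t\downarrow\theta_I$; thus for $t$ slightly larger than $\theta_I$ we have simultaneously $P_I(t)<+\infty$ (because $t>\theta_I$) and $P_I(t)>0$, so $\SS_I$ is strongly regular. Conversely, suppose $\SS_I$ is strongly regular and pick $t_0$ with $0<P_I(t_0)<+\infty$; then $t_0\ge\theta_I$, and if $t_0=\theta_I$ right-continuity lets us enlarge $t_0$ slightly while keeping $0<P_I(t_0)<+\infty$, so we may assume $t_0>\theta_I$. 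Since $J_I\subseteq\OI$ we have $\HD(J_I)\le 1$, so Bowen's formula gives $P_I(2)\le 0$, and monotonicity together with $P_I(t_0)>0$ then forces $t_0<2$. As $P_I$ is continuous on $[t_0,2]\subseteq(\theta_I,\infty)$ with $P_I(t_0)>0\ge P_I(2)$, the intermediate value theorem produces $t^\ast\in(t_0,2)$ with $P_I(t^\ast)=0$; hence $\SS_I$ is regular.

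Part (b) then requires no extra work: in the preceding paragraph, strong regularity produced a zero $t^\ast$ of $P_I$ with $t^\ast>t_0>\theta_I$, and by the uniqueness clause of Bowen's formula this zero is exactly $\HD(J_I)$. Therefore $\HD(J_I)=t^\ast>\theta_I$.

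The only genuine obstacle is the preliminary fact that $P_I$ has no downward jump at $\theta_I$; without it, a system that is strongly regular only by virtue of $P_I(\theta_I)$ being finite and positive while $P_I$ drops to a non-positive value immediately to the right would violate both parts of the proposition. The approximation $P_I=\lim_N P_{I_N}$ supplied by Theorem~\ref{continuityofpressure} is precisely what rules this out, by forcing lower semicontinuity; the rest of the argument is soft, using only Bowen's formula, the monotonicity and convexity of $P_I$, and the trivial bound $\HD(J_I)\le 1$.
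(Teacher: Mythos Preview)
The paper does not prove this proposition; it is quoted verbatim as Theorem 4.3.5 of \cite{MU2}, in line with the blanket remark at the start of Section~\ref{sectionIFS} that all results there outside Subsection~\ref{subsectiontwolemmas} are taken from \cite{MU1,MU2}. So there is no in-paper proof to compare your attempt against.

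That said, your argument is sound and uses only tools the paper makes available. The key technical point you isolate---that $P_I$ cannot jump downward at $\theta_I$---is handled correctly: Theorem~\ref{continuityofpressure} exhibits $P_I$ as the increasing limit of the continuous functions $P_{I_N}$, forcing lower semicontinuity, while monotonicity supplies the matching upper bound from the right. Convexity of $P_I$ (a standard consequence of H\"older's inequality applied to the defining sums) then gives continuity on $(\theta_I,\infty)$, so the intermediate value argument on $[t_0,2]$ goes through. Your invocation of $\HD(J_I)\le 1$ to get $P_I(2)\le 0$ via Bowen's formula is fine; one could equally note that $P_I\le P_\N$ and $P_\N(1)=0$, but your route works. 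Part (b) then falls out exactly as you say, since the zero $t^\ast$ you produce lies strictly to the right of $\theta_I$ and Bowen's formula identifies it with $\HD(J_I)$.
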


\begin{proposition}[Theorem 4.3.4 of \cite{MU2}]\label{propositioncofinitelyregular}
Fix $I\subset\N$. The system $\SS_I$ is cofinitely regular if and only if the series
\[
\sum_{i\in I}\|g_i'\|_\infty^{\theta_I} = \sum_{i\in I}i^{-2\theta_I}
\]
diverges.
\end{proposition}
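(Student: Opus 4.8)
The plan is to prove the sharper, parameter-wise statement that for \emph{every} $t\ge 0$,
\[
P_I(t)=+\infty \quad\Longleftrightarrow\quad \sum_{i\in I}i^{-2t}=+\infty,
\]
and then to read off the proposition by specializing to $t=\theta_I$: by definition $\SS_I$ is cofinitely regular precisely when $P_I(\theta_I)=+\infty$, while the series in the statement is $\sum_{i\in I}i^{-2\theta_I}$ (using $\|g_i'\|_\infty=i^{-2}$, as in the proof of Proposition~\ref{theta}). Writing $Z_n(t):=\sum_{\omega\in I^n}\|g_\omega'\|_\infty^t\in(0,+\infty]$, so that $P_I(t)=\lim_n\frac1n\log Z_n(t)$ and $Z_1(t)=\sum_{i\in I}i^{-2t}$, the displayed equivalence is just the assertion that $P_I(t)<\infty\iff Z_1(t)<\infty$.

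The key input is the bounded distortion property of the conformal IFS $\SS_I$ (classical for the Gauss branches $g_i$, and in general one of the CIFS axioms): there is a constant $K\ge 1$ with
\[
K^{-1}\|g_\omega'\|_\infty\,\|g_\tau'\|_\infty \;\le\; \|g_{\omega\tau}'\|_\infty \;\le\; \|g_\omega'\|_\infty\,\|g_\tau'\|_\infty
\]
for all finite words $\omega,\tau$ over $I$ (the upper bound is the chain rule; the lower bound uses that $\sup|g_\omega'|/\inf|g_\omega'|\le K$). Summing the $t$-th powers over all pairs of words of lengths $n$ and $m$ gives
\[
K^{-t}Z_n(t)Z_m(t)\;\le\;Z_{n+m}(t)\;\le\;Z_n(t)Z_m(t).
\]
From the right inequality, $n\mapsto\log Z_n(t)$ is subadditive; hence if $Z_1(t)<\infty$ then $Z_n(t)\le Z_1(t)^n<\infty$ for all $n$, and by Fekete's lemma $P_I(t)=\inf_n\frac1n\log Z_n(t)\le\log Z_1(t)<\infty$. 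From the left inequality, $n\mapsto\log Z_n(t)-t\log K$ is superadditive, so $P_I(t)=\sup_n\frac1n\bigl(\log Z_n(t)-t\log K\bigr)\ge\log Z_1(t)-t\log K$. Finally, if $Z_1(t)=\infty$ then iterating the left inequality gives $Z_n(t)\ge K^{-(n-1)t}Z_1(t)^n=\infty$ for every $n$, whence $P_I(t)=+\infty$. Combining, $P_I(t)<\infty$ if and only if $Z_1(t)<\infty$, which is the desired equivalence.

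It remains to read off the proposition at $t=\theta_I$. If the series $\sum_{i\in I}i^{-2\theta_I}=Z_1(\theta_I)$ diverges, then $P_I(\theta_I)=+\infty$, so $\SS_I$ is cofinitely regular. Conversely, if that series converges then $Z_1(\theta_I)<\infty$, hence $P_I(\theta_I)\le\log Z_1(\theta_I)<\infty$ and $\SS_I$ is not cofinitely regular. The only real work is the bounded-distortion/subadditivity bookkeeping, and nothing delicate is required there: we only use the two-sided bound $\log Z_1(t)-t\log K\le P_I(t)\le\log Z_1(t)$ to separate finite from infinite, so crude constants suffice, and the one point needing a moment's care is to treat the case $Z_1(t)=+\infty$ separately (where every $Z_n(t)$, and hence $P_I(t)$, is just $+\infty$) rather than inside the Fekete argument. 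Everything else — that $\|g_i'\|_\infty=i^{-2}$, and that $\theta_I$ is exactly the threshold between convergence and divergence of $\sum_{i\in I}i^{-2t}$ — is already recorded in Proposition~\ref{theta}.
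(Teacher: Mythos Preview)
Your argument is correct. The paper itself does not give a proof of this proposition: it simply cites Theorem~4.3.4 of \cite{MU2}. What you have written is essentially the standard proof one finds in that reference, namely the bounded-distortion submultiplicativity/supermultiplicativity of the partition sums $Z_n(t)$, which yields the pointwise equivalence $P_I(t)=+\infty\iff Z_1(t)=+\infty$ and hence the proposition at $t=\theta_I$. Your treatment of the degenerate case $Z_1(t)=+\infty$ separately from the Fekete argument is the right precaution, and the distortion constant (here $K\le 4$, as noted in Lemma~\ref{lemmaSomega}) is indeed all that is needed.
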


Recall from Section~\ref{sectionintroduction} that if $\mu$ is a probability measure on $\OI$ is invariant with respect to the Gauss map $G:\OI\rightarrow\OI$, then the integral
\[
\chi_\mu(G)=\int\log|G'|\dee\mu
\]
is called the \emph{Lyapunov exponent} of the measure $\mu$ with respect to the Gauss map $G$. We have the following:

\begin{proposition}\label{srimpfle}
Fix $I\subseteq\N$. If the system $\SS_I$ is strongly regular, then $\chi_{\mu_I}(G) < +\infty$.
\end{proposition}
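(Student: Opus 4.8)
The plan is to transfer the problem to the conformal measure $m_I$ and then to a numerical series whose convergence is governed by strong regularity. By Proposition~\ref{propositioninvariantmeasure}, $\mu_I$ is equivalent to $m_I$ and the density $\dee\mu_I/\dee m_I$ is bounded above; moreover, with $\eta = \log(1+\xi)$ as in Definition~\ref{definitionxieta}, we have $\log|G'(x)| = -2\log x < 2\eta(x)$ for every irrational $x\in\OI$ (since $1 + \lfloor 1/x\rfloor > 1/x$). Hence
\[
\chi_{\mu_I}(G) = \int\log|G'|\dee\mu_I \;\le\; 2\int\eta\dee\mu_I \;\le\; 2\Bigl\|\tfrac{\dee\mu_I}{\dee m_I}\Bigr\|_\infty\int\eta\dee m_I ,
\]
so it suffices to prove that $\int\eta\dee m_I < \infty$.

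I would next express this integral as a series over the first-level cylinders. Writing $g_i(\OI) = [1/(i+1),\,1/i]$, the sets $g_i(\OI)$ for $i\in I$ have pairwise intersections contained in a single rational point (hence $m_I$-null), and $m_I(g_i(\OI\butnot J_I)) = \int_{\OI\butnot J_I}|g_i'|^h\dee m_I = 0$ because $m_I(\OI\butnot J_I) = 0$; so, modulo $m_I$-null sets, $J_I = \bigsqcup_{i\in I} g_i(J_I)$, and on $g_i(J_I)$ the first continued fraction entry is $i$, so $\eta\equiv\log(1+i)$ there. Using $h$-conformality of $m_I$ (with $h = \HD(J_I)$) and the bound $(i+1)^{-2}\le|g_i'|\le i^{-2}$ on $\OI$, we get $m_I(g_i(J_I)) = \int_{J_I}|g_i'|^h\dee m_I \asymp i^{-2h}$, and therefore
\[
\int\eta\dee m_I \;=\; \sum_{i\in I}\log(1+i)\,m_I(g_i(J_I)) \;\asymp\; \sum_{i\in I}\log(1+i)\,i^{-2h} .
\]

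Finally I would invoke strong regularity. By part (b) of Proposition~\ref{propositionregularity}, strong regularity of $\SS_I$ forces $h = \HD(J_I) > \theta_I$, and by Proposition~\ref{theta}, $\theta_I = \inf\{t\ge 0 : \sum_{i\in I}i^{-2t} < \infty\}$. Choosing $t$ with $\theta_I < t < h$, the series $\sum_{i\in I}i^{-2t}$ converges; since $h - t > 0$ we have $\log(1+i)\,i^{-2(h-t)}\to 0$, hence $\log(1+i)\,i^{-2h} = \bigl(\log(1+i)\,i^{-2(h-t)}\bigr)i^{-2t}\lesssim i^{-2t}$, and summing gives $\sum_{i\in I}\log(1+i)\,i^{-2h} < \infty$. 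By the previous paragraph this yields $\int\eta\dee m_I < \infty$, and hence $\chi_{\mu_I}(G) < \infty$.

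I expect the argument to be essentially bookkeeping; the only steps needing a little care are the null-set justifications of the cylinder decomposition and of the estimate $m_I(g_i(J_I))\asymp i^{-2h}$ in the second paragraph, and the elementary comparison between $\log|G'|$ and $\eta$ in the first. The genuinely substantive input — that $h > \theta_I$ — is supplied verbatim by Proposition~\ref{propositionregularity}(b), so no real obstacle remains.
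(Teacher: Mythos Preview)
Your proof is correct and follows essentially the same approach as the paper: transfer to $m_I$ via the bounded Radon--Nikodym derivative, decompose into first-level cylinders to obtain $\int\eta\,\dee m_I \asymp \sum_{i\in I}\log(1+i)\,i^{-2h}$, and then use $h>\theta_I$ (Proposition~\ref{propositionregularity}(b)) to pick $t\in(\theta_I,h)$ and absorb the logarithm into the gap $i^{-2(h-t)}$. The paper compresses these steps into a single chain of asymptotics, but the argument is the same.
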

\begin{proof}
By part (b) of Proposition~\ref{propositionregularity}, we have $h_I > \theta_I$. Fix $\theta_I < t < h_I$. Then by the definition of $\theta_I$, the series
\[
\sum_{i\in I}i^{-2t}
\]
converges. Now we can estimate the Lyapunov exponent of $\mu_I$ as follows:
\begin{align*}
\int\log|G'|\dee\mu_I \asymp \int\eta\dee m_I
&= \sum_{i\in I}\log(1 + i)m_I(g_i(\OI))\\
&\asymp \sum_{i\in I}\log(1 + i)i^{-2h_I}
\lesssim \sum_{i\in I}i^{-2t} < \infty.
\end{align*}
\end{proof}

\subsection{Two lemmas} \label{subsectiontwolemmas}
Each of the two lemmas in this subsection will be used several times throughout the remainder of the paper.

For every set $I\subseteq \N$ and for every $t\ge 0$ let
\[
\lambda_t(I) = e^{P_I(t)}.
\]

\begin{lemma}[Lemma 4.3 of \cite{KZ}]
\label{lemmaKZ}
Fix $\delta > 0$. Let $i\geq 2$ and let $I$ be a finite subset of $\N\butnot\{i\}$. Then
\begin{equation}
\label{KZ}
\lambda_\delta(I) + \left(\frac{1}{i + 1}\right)^{2\delta}
\leq \lambda_\delta(I\cup\{i\})
\leq \lambda_\delta(I) + \left(\frac{2}{i + 2}\right)^{2\delta}\cdot
\end{equation}
\end{lemma}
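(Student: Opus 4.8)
The statement to prove is Lemma~\ref{lemmaKZ}, the two-sided estimate for how $\lambda_\delta(I) = e^{P_I(\delta)}$ changes when a single index $i$ is adjoined to a finite set $I$. Since this is attributed to Kesseböhmer–Zhu, the cleanest route is to exploit the fact that for a \emph{finite} alphabet $J$, $\lambda_\delta(J)$ is the leading eigenvalue (spectral radius) of the transfer-type operator, or equivalently the limit $\lim_n \big(\sum_{\omega \in J^n}\|g_\omega'\|_\infty^\delta\big)^{1/n}$. The first step is to reduce everything to a combinatorial statement about counting words: I would pass to the generating-function / matrix picture in which $\lambda_\delta(J)$ is characterized by the equation relating the ``partition sums'' $Z_n(J) := \sum_{\omega\in J^n}\|g_\omega'\|_\infty^\delta$ across levels. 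The key algebraic identity is a decomposition of words over $I \cup \{i\}$ according to the positions where the letter $i$ appears: every word in $(I\cup\{i\})^n$ is obtained by interleaving blocks of $I$-words with occurrences of the letter $i$. This should yield a renewal-type equation of the form
\[
\sum_{n\ge 0} Z_n(I\cup\{i\})\, s^n
= \frac{1}{1 - s\,a_i}\cdot\frac{1}{1 - \sum_{n\ge 1} Z_n(I) s^n},
\]
or a suitably corrected version thereof, where $a_i := \|g_i'\|_\infty^\delta \asymp (i+1)^{-2\delta}$, and the poles of this rational function in $s$ govern $\lambda_\delta(I\cup\{i\})$ versus $\lambda_\delta(I)$.

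**Main steps in order.** (1) Establish the renewal/generating-function identity above by the interleaving decomposition, being careful about the role of the empty $I$-block (i.e.\ consecutive $i$'s). (2) From the identity, the number $\lambda := \lambda_\delta(I\cup\{i\})$ is characterized as the reciprocal of the smallest positive pole, hence by an equation of the shape $F_I(1/\lambda) + a_i/\lambda \cdot(\text{something}) = 1$ where $F_I(s) = \sum_{n\ge1} Z_n(I) s^n$ satisfies $F_I(1/\lambda_\delta(I)) = 1$ at the old spectral radius (monotonicity of $F_I$ and the fact that $Z_n(I) \asymp \lambda_\delta(I)^n$ make this rigorous for finite $I$). (3) Compare: plugging $s = 1/\lambda_\delta(I)$ into the new equation shows the new pole is strictly smaller, giving $\lambda_\delta(I\cup\{i\}) > \lambda_\delta(I)$, and a first-order perturbation estimate — controlling $F_I'$ near its value-$1$ point, which for finite $I$ is bounded above and below — converts the size of the added term $a_i \asymp (i+1)^{-2\delta}$ into the claimed additive gap, after absorbing the $2^{2\delta}$-type multiplicative slack into the one-sided constants $(i+1)^{-2\delta}$ versus $(2/(i+2))^{2\delta}$. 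The constants $1/(i+1)$ and $2/(i+2)$ on the two sides presumably come precisely from the standard bounds $(i+1)^{-2} \le \|g_i'\|_\infty$ on part of $[0,1]$ and $\|g_i'(x)\| = (i+x)^{-2} \le (2/(i+2))^2$ type distortion estimates together with the bounded-distortion property of the maps $g_\omega$; I would insert those at the points where $a_i$ is bounded above and below.

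**Alternative cleaner route.** Rather than generating functions, one can argue directly with the pressure: $P_{I\cup\{i\}}(\delta) \ge P_I(\delta)$ is immediate since $(I\cup\{i\})^n \supseteq I^n$, and exponentiating plus a one-step refinement (counting words with exactly one letter equal to $i$) gives $\lambda_\delta(I\cup\{i\}) \ge \lambda_\delta(I) + a_i$ once one checks that the ``extra'' contribution from those words dominates correctly — this uses that for finite $I$ the partition function grows exactly like $\lambda_\delta(I)^n$ with comparable constants, so the derivative of $t \mapsto \lambda_\delta(I\cup\{i\})$ in the ``weight of $i$'' variable is bounded below by $1$. The upper bound $\lambda_\delta(I\cup\{i\}) \le \lambda_\delta(I) + a_i'$ is a sub-multiplicativity/convexity argument: one shows $\lambda_\delta$ is subadditive in the added weight with constant $1$, again via the interleaving decomposition but now only needing the inequality direction. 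I would try this second route first since it avoids analyzing pole locations.

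**Main obstacle.** The delicate point is the \emph{sharpness of the constant $1$} in front of the added term in both directions simultaneously — i.e.\ that the gap is genuinely $\asymp a_i$ with the specific friendly constants $1$ and $2^{2\delta}$, not merely $\asymp a_i$ with unspecified $\delta$-dependent constants. This forces one to control $F_I'$ (equivalently, the expected return time / the derivative of pressure) near the critical point uniformly enough, and to track exactly where the distortion constants of the $g_\omega$ enter. I expect that getting the \emph{clean} lower bound $\lambda_\delta(I) + (i+1)^{-2\delta}$ is the harder of the two inequalities, because it requires showing the interleaving of a single $i$ between otherwise-arbitrary $I$-words contributes at least the full weight $a_i$ with no loss, which is where one must exploit that the $g_i$ are genuine inverse branches of the Gauss map (so the relevant product estimates compose cleanly) rather than abstract contractions.
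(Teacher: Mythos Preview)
The paper does not prove this lemma; it is cited as Lemma~4.3 of \cite{KZ} and used as a black box (the only contribution here is the remark that \eqref{KZ} extends to infinite $I$ via Theorem~\ref{continuityofpressure}). So there is nothing in the present paper to compare your plan against.

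On its own merits, your plan has a real gap, and it is exactly the one you yourself flag. Both routes --- the renewal identity and the direct interleaving count --- decompose a word over $I\cup\{i\}$ into $I$-blocks separated by occurrences of $i$, and then treat the weight $\|g_\omega'\|_\infty^\delta$ as if it were multiplicative across the splitting. For the Gauss system one only has
\[
4^{-\delta}\,\|g_\tau'\|_\infty^\delta\|g_\sigma'\|_\infty^\delta \ \le\ \|g_{\tau\sigma}'\|_\infty^\delta\ \le\ \|g_\tau'\|_\infty^\delta\|g_\sigma'\|_\infty^\delta,
\]
and the lower distortion factor is incurred at \emph{each} splitting point, so your renewal identity degrades to a pair of inequalities whose error compounds with the number of $i$'s. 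Consequently the pole-perturbation step (``controlling $F_I'$ near its value-$1$ point'') yields $\lambda_\delta(I\cup\{i\})-\lambda_\delta(I)\asymp a_i$ with an implied constant that depends on the Perron eigenfunction of $L_{I,\delta}$, hence on $I$; it does not produce the coefficient $1$ uniform in $I$ that \eqref{KZ} asserts. Your side remark that words with exactly one letter $i$ already force the lower bound is also not correct: those words contribute only of order $n\,\lambda_\delta(I)^{n-1}$ to the level-$n$ partition sum and disappear after taking $n$th roots.

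What is missing is an argument that handles all occurrences of $i$ simultaneously without accumulating distortion --- for instance by testing the operator $L_{I\cup\{i\},\delta}=L_{I,\delta}+|g_i'|^\delta(\,\cdot\,\circ g_i)$ against the Perron eigenfunction $h_I$ of $L_{I,\delta}$ and invoking a min--max (Collatz--Wielandt) characterisation of the leading eigenvalue. This reduces \eqref{KZ} to a two-sided bound on $|g_i'(x)|^\delta\,h_I(g_i(x))/h_I(x)$ uniform in $x\in[0,1]$, and it is here that the hypothesis $i\ge 2$ (so $g_i([0,1])\subset(0,\tfrac12]$) together with structural properties of $h_I$ specific to the Gauss system must enter. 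That step is absent from your outline; without it one obtains only $I$-dependent constants, not the explicit $(i+1)^{-2\delta}$ and $(2/(i+2))^{2\delta}$.
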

\begin{remark}
Applying Theorem~\ref{continuityofpressure} to this lemma, we conclude
that (\ref{KZ}) holds in fact for all sets $I\subset \N\butnot\{i\}$. 
\end{remark}

Recall that we have defined $\xi(x) = \lfloor 1/x\rfloor$ to be the first entry of the continued fraction expansion of $x$. For any $\omega\in \N^n$, let
\[
S_\omega := g_\omega(\OI) = \{x\in\OI:\xi(G^j(x)) = \omega_j\all j = 0,\ldots,n - 1\},
\]
i.e. $S_\omega$ is the set of all numbers whose continued fraction expansions begin with the sequence $\omega_0,\ldots,\omega_{n - 1}$. Furthermore, for each $k\in\N$ let
\[
S_{\omega,k}^+=\bigcup_{i\leq k}S_{\omega i}.
\]

\begin{lemma}
\label{lemmaSomega}
Fix $I\subseteq\N$, and suppose that the IFS $\SS_I$ is regular. Let $h_I$ be the Hausdorff dimension of $J_I$ and let $m_I$ be the unique $h_I$-conformal measure of $\SS_I$. Then
\begin{equation}
\label{SomegaiSomega}
\frac{m_I(S_{\omega i})}{m_I(S_\omega)}
\geq \frac{1}{4^{h_I}}i^{-2h_I}
\end{equation}
and
\begin{equation}\label{220120504}
\frac{m_I( S_{\omega,k}^+)}{m_I(S_\omega)} 
\leq 1 - \frac{1}{4^{h_I}}\sum_{\substack{i \in I \\ i > k}}i^{-2h_I}.
\end{equation}
\end{lemma}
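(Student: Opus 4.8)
The plan is to use the conformality of $m_I$ to reduce both inequalities to statements about the measures of cylinders of length one, and then apply the bounded distortion estimate $\|g_i'\|_\infty = i^{-2}$ together with the standard comparison between $\|g_i'\|_\infty$ and the infimum of $|g_i'|$. First I would observe that conformality gives $m_I(S_{\omega i}) = m_I(g_\omega(S_i)) = \int_{S_i}|g_\omega'|^{h_I}\dee m_I$ and $m_I(S_\omega) = m_I(g_\omega(\OI)) = \int_\OI |g_\omega'|^{h_I}\dee m_I$, so that the ratio $m_I(S_{\omega i})/m_I(S_\omega)$ is a weighted average of $|g_\omega'|^{h_I}$ over $S_i$ versus over all of $\OI$. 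The key geometric input is that the distortion of $g_\omega$ is bounded independently of $\omega$: for all $x,y\in\OI$ one has $|g_\omega'(x)|/|g_\omega'(y)|$ bounded, and more precisely $|g_\omega'(x)| \geq \tfrac14\|g_\omega'\|_\infty$ (this is the standard $4$-distortion bound for the Gauss IFS, coming from the fact that each $|g_i'(x)| = (i+x)^{-2}$ varies by a factor at most $((i+1)/i)^2 \leq 4$, and products of such telescope). Granting this, $\int_{S_i}|g_\omega'|^{h_I}\dee m_I \geq \tfrac{1}{4^{h_I}}\|g_\omega'\|_\infty^{h_I}\, m_I(S_i)$ while $\int_\OI |g_\omega'|^{h_I}\dee m_I \leq \|g_\omega'\|_\infty^{h_I}$, so the ratio is at least $\tfrac{1}{4^{h_I}}m_I(S_i)$.

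The remaining point for \eqref{SomegaiSomega} is then to bound $m_I(S_i) = m_I(g_i(\OI))$ from below. By conformality again, $m_I(g_i(\OI)) = \int_\OI |g_i'|^{h_I}\dee m_I \geq \inf_\OI|g_i'|^{h_I} = (i+1)^{-2h_I} \geq 4^{-h_I} i^{-2h_I}$, and combining this with the previous paragraph would give $m_I(S_{\omega i})/m_I(S_\omega) \geq 16^{-h_I}i^{-2h_I}$; to get the stated constant $4^{-h_I}$ I would instead apply the distortion bound only once, e.g. by writing $m_I(S_{\omega i}) = m_I(g_{\omega i}(\OI)) = \int_\OI|g_{\omega i}'|^{h_I}\dee m_I \geq 4^{-h_I}\|g_{\omega i}'\|_\infty^{h_I}$ and $m_I(S_\omega) = \int_\OI|g_\omega'|^{h_I}\dee m_I \leq \|g_\omega'\|_\infty^{h_I}$, then using $\|g_{\omega i}'\|_\infty/\|g_\omega'\|_\infty \asymp \|g_i'\|_\infty = i^{-2}$ — more precisely $\|g_{\omega i}'\|_\infty \geq \|g_\omega'\|_\infty \inf_\OI|g_i'| \geq \|g_\omega'\|_\infty (i+1)^{-2} \geq 4^{-1}\|g_\omega'\|_\infty i^{-2}$, so that only one factor of $4^{-h_I}$ appears after absorbing. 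The precise bookkeeping of which factor of $4$ goes where is the one place where care is needed, but it is routine.

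For \eqref{220120504}, I would pass to complements: $m_I(S_\omega) - m_I(S_{\omega,k}^+) = m_I\big(\bigcup_{i\in I,\, i>k} S_{\omega i}\big) = \sum_{i\in I,\, i>k} m_I(S_{\omega i})$, using that the $S_{\omega i}$ are pairwise disjoint and that $m_I$ is supported on $J_I$ (so indices outside $I$ contribute nothing, and the union over $i\in I$ with $i>k$ captures exactly the part of $S_\omega$ not in $S_{\omega,k}^+$ up to an $m_I$-null set of rationals). Dividing by $m_I(S_\omega)$ and applying \eqref{SomegaiSomega} termwise yields $m_I(S_{\omega,k}^+)/m_I(S_\omega) \leq 1 - 4^{-h_I}\sum_{i\in I,\, i>k} i^{-2h_I}$, which is exactly the claim.

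The main obstacle is really just making the bounded distortion estimate for the Gauss IFS explicit with the constant $4$, and deciding how to distribute that constant so that the final bounds come out with $4^{-h_I}$ rather than a worse power; everything else is a direct unwinding of the definition of a conformal measure. There is also the minor measure-theoretic point in \eqref{220120504} that $m_I(\cl{J_I}\butnot J_I) = 0$ since $m_I$ is $h_I$-conformal with $h_I > 0$ (rationals form a countable set and get zero mass), which justifies the clean decomposition of $S_\omega$.
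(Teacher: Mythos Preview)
Your proposal is correct and follows essentially the same route as the paper: use conformality to express $m_I(S_{\omega i})/m_I(S_\omega)$ as a ratio of integrals of $|g_{\omega i}'|^{h_I}$ and $|g_\omega'|^{h_I}$, apply the uniform distortion bound $\max_{\OI}|g_\omega'|/\min_{\OI}|g_\omega'|\le 4$, and then deduce \eqref{220120504} from \eqref{SomegaiSomega} by complementation and summation. The paper's proof writes the ratio as $\int|g_{\omega i}'|^{h_I}\,dm_I \big/ \int|g_\omega'|^{h_I}\,dm_I$ and bounds it below by $(\min|g_\omega'|/\max|g_\omega'|)^{h_I}\min|g_i'|^{h_I}$, which is exactly your second formulation.

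One small remark: you are right to flag the bookkeeping of the $4$'s. Both your argument and the paper's in fact yield $4^{-h_I}(i+1)^{-2h_I}$ (equivalently $16^{-h_I}i^{-2h_I}$) rather than the stated $4^{-h_I}i^{-2h_I}$, since $\min_{\OI}|g_i'|=(i+1)^{-2}$. The paper simply writes ``which yields \eqref{SomegaiSomega}'' at this point. This discrepancy is harmless for every application in the paper (only the existence of some positive constant matters), so your honest accounting of the constant is if anything an improvement, not a gap.
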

\begin{proof}
It is clear that (\ref{220120504}) follows from (\ref{SomegaiSomega}). To demonstrate (\ref{SomegaiSomega}), note that since $m_I$ is $h_I$-conformal we have
\begin{align*}
\frac{m_I(S_{\omega i})}{m_I(S_\omega)}
= \frac{m_I(g_{\omega i}(\OI))}{m_I(g_\omega(\OI))}
&= \frac{\int |(g_{\omega i}'(x)|^{h_I}\dee m_I(x)}{\int |g_\omega'(x)|^{h_I}\dee m_I(x)}\\
&\geq \frac{\min_{\OI}|g_\omega'|^{h_I}}{\max_{\OI}|g_\omega'|^{h_I}}\min_{\OI}|g_i'|^{h_I}.
\end{align*}
On the other hand, we have (see \cite{MU1}, line -10 of p.4997, or by direct computation)
\[
\frac{\max_{\OI}|g_\omega'|}{\min_{\OI}|g_\omega'|} \leq 4
\]
which yields (\ref{SomegaiSomega}).
\end{proof}

\section{Extremality of conformal measures} \label{sectionconformalextremal}
Fix $I\subset\N$, and suppose that the IFS $\SS_I$ is regular. In this section we discuss the extremality of the measures $m_I$ and $\mu_I$ defined in Section~\ref{sectionIFS}. Note that since $m_I$ and $\mu_I$ are absolutely continuous to each other, $m_I$ is extremal if and only if $\mu_I$ is.

By Theorem~\ref{theoremextremal}, if $\chi_{\mu_I} < + \infty$, then $\mu_I$ is extremal. By Proposition~\ref{srimpfle}, if $\SS_I$ is strongly regular, then $\chi_{\mu_I} < \infty$. The following proposition gives very general sufficient conditions for $\SS_I$ to be strongly regular:

\begin{proposition}\label{scfsr}
If $I\subseteq\N$, then any of the following three conditions entail strong regularity of the iterated function system $\SS_I$, and thus the extremality of the measures $m_I$ and $\mu_I$:
\begin{itemize}
\item[(a)] $I$ is finite.
\item[(b)] The series $\sum_{a\in I}a^{-2\theta_I}$ diverges.
\item[(c)] The Hausdorff dimension of the limit set of the IFS is strictly
  greater than $1/2$. 
\item[(d)] $1,2\in I$.
\end{itemize}
\end{proposition}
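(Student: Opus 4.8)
The plan is to reduce every case to the single statement that $\SS_I$ is \emph{strongly regular}. Granting that, Proposition~\ref{propositionregularity}(a) upgrades it to regularity, so $m_I$ and $\mu_I$ exist and are equivalent (Proposition~\ref{propositioninvariantmeasure}); Proposition~\ref{srimpfle} gives $\chi_{\mu_I}(G)<+\infty$; and Theorem~\ref{theoremextremal}, applied to the $G$-invariant probability measure $\mu_I$ on $J_I\subseteq\OI\butnot\Q$, then shows $\mu_I$ (hence also $m_I$, being equivalent to $\mu_I$) is extremal. Throughout I use the elementary bound $\theta_I\leq 1/2$, valid for all $I\subseteq\N$ since $\sum_{i\in\N}i^{-2t}$ converges for $t>1/2$ (Proposition~\ref{theta}).

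\textbf{(a)} If $\#I\geq 2$ then $P_I(0)=\lim_n\frac1n\log\#(I^n)=\log\#I\in(0,+\infty)$, so $\SS_I$ is strongly regular by definition; the degenerate case $\#I=1$ (in which $J_I$ is a single quadratic irrational, so $\mu_I$ is a Dirac mass at a badly approximable — hence not very well approximable — number, and is trivially extremal) is excluded by the convention that an IFS has at least two maps. \textbf{(b)} By Proposition~\ref{propositioncofinitelyregular} the hypothesis $\sum_{a\in I}a^{-2\theta_I}=+\infty$ says exactly that $\SS_I$ is cofinitely regular, which entails strong regularity by Proposition~\ref{propositionregularity}(a). \textbf{(c)} Since $\theta_I\leq 1/2<\HD(J_I)$, the interval $(\theta_I,\HD(J_I))$ is nonempty; for any $t$ in it, $P_I(t)<+\infty$ (because $t>\theta_I$), and, by Bowen's formula (Theorem~\ref{BowenFormula}) together with $t<\HD(J_I)=\inf\{s:P_I(s)\leq 0\}$, also $P_I(t)>0$. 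Thus $0<P_I(t)<+\infty$ and $\SS_I$ is strongly regular.

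For \textbf{(d)} the heart of the matter is to show $P_{\{1,2\}}(1/2)>0$: since $\{1,2\}\subseteq I$ gives $J_{\{1,2\}}\subseteq J_I$, Bowen's formula then yields $\HD(J_I)\geq\HD(J_{\{1,2\}})>1/2$, and (d) follows from (c). To get $P_{\{1,2\}}(1/2)>0$ I bound $Z_n:=\sum_{\omega\in\{1,2\}^n}\|g_\omega'\|_\infty^{1/2}$ from below with the test function $f(x)=(2+x)^{-1}$ on $\OI$. Using $|g_i'(x)|^{1/2}=(i+x)^{-1}$ and $f(g_i(x))=(i+x)/(2(i+x)+1)$, one computes, for every $x\in\OI$,
\[
\sum_{i\in\{1,2\}}|g_i'(x)|^{1/2}f(g_i(x))
=(2+x)\left(\frac{1}{3+2x}+\frac{1}{5+2x}\right)f(x)
=\left(1+\frac{1}{(3+2x)(5+2x)}\right)f(x)
\geq\frac{36}{35}f(x),
\]
the middle identity being $\frac{a^2}{a^2-1}=1+\frac{1}{a^2-1}$ with $a=2(2+x)$ (so $3+2x=a-1$ and $5+2x=a+1$), and the inequality using $(3+2x)(5+2x)\leq 35$ on $\OI$. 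Iterating this, via the chain rule $|g_{i\omega}'(x)|=|g_i'(g_\omega(x))|\,|g_\omega'(x)|$, gives $\sum_{\omega\in\{1,2\}^n}|g_\omega'(x)|^{1/2}f(g_\omega(x))\geq(36/35)^n f(x)$ for all $n$ and all $x\in\OI$. Evaluating at $x=0$, where $f(0)=1/2=\|f\|_\infty$, and bounding the left side above by $\frac{1}{2}\sum_\omega\|g_\omega'\|_\infty^{1/2}=\frac12 Z_n$, we obtain $Z_n\geq(36/35)^n$, hence $P_{\{1,2\}}(1/2)=\lim_n\frac1n\log Z_n\geq\log(36/35)>0$.

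The only real difficulty lies in \textbf{(d)}: parts (a)--(c) are essentially bookkeeping around the cited structure theory once one observes that $\theta_I\leq 1/2$, whereas (d) rests on the (numerically familiar, $\HD(J_{\{1,2\}})\approx 0.5313$) fact that the two-letter subsystem already has dimension exceeding $1/2$. Making this rigorous and self-contained forces one to exhibit an explicit positive function witnessing positivity of the pressure at the critical parameter $1/2$; guessing the density $f(x)=(2+x)^{-1}$ — for which the relevant sum collapses because $3+2x$ and $5+2x$ sit symmetrically about $2(2+x)$ — is the one step in the argument that is not purely mechanical.
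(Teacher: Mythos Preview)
Your argument is correct. Parts (a)--(c) track the paper's proof closely: the paper likewise uses $0<P_I(0)<\infty$ for (a), Proposition~\ref{propositioncofinitelyregular} plus Proposition~\ref{propositionregularity}(a) for (b), and for (c) invokes a result from \cite{MU2} together with $\theta_I\le\theta_\N=1/2$; your direct verification that $0<P_I(t)<\infty$ for any $t\in(\theta_I,\HD(J_I))$ via Bowen's formula is the same mechanism made explicit.

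Part (d) is where you genuinely diverge. The paper simply quotes Good's 1941 result \cite{Go} that $\HD(J_{\{1,2\}})>1/2$ and reduces to (c). You instead produce a self-contained proof that $P_{\{1,2\}}(1/2)>0$ by exhibiting a subinvariant function for the transfer operator: with $f(x)=(2+x)^{-1}$ the identity $\sum_{i\in\{1,2\}}|g_i'(x)|^{1/2}f(g_i(x))=\bigl(1+\tfrac{1}{(3+2x)(5+2x)}\bigr)f(x)\ge\tfrac{36}{35}f(x)$ iterates to $Z_n\ge(36/35)^n$, and I checked the algebra (via $a=2(2+x)$, so $3+2x=a-1$, $5+2x=a+1$). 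This buys you a proof independent of \cite{Go}, and the explicit constant $\log(36/35)$ as a lower bound for the pressure, at the cost of having to guess the right density; the paper's route is shorter but relies on an external reference. Both are valid.
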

\begin{proof}
Item (a) follows directly from the definition (we have $0 < P_I(0) < \infty$); item (b) follows from (a) of Proposition~\ref{propositionregularity} and Proposition~\ref{propositioncofinitelyregular}; item (c) follows from Theorem 4.3.10 of \cite{MU2} along with the observation that $\theta_I\le\theta_\N=1/2$; item (d) follows from item (c) and the fact, proven in \cite{Go}, that $h_{\{1,2\}} = \HD(J_{\{1,2\}}) > 1/2$.
\end{proof}
\begin{remark}
In case (a), the extremality of $\mu_I$ is obvious since its topological support $J_I$ is contained in the set of badly approximable numbers (see Remark~\ref{remarkba}).
\end{remark}
\begin{remark}
The main result of \cite{Ur}, namely, the extremality part of Theorem 4.5 of that paper, can be deduced from part (b) of Proposition \ref{scfsr}.
\end{remark}
\begin{example}
Fix $a\geq 2$ and let $I$ be the geometric series $I = \{a,a^2,\ldots\}$. Then condition (b) of Proposition~\ref{scfsr} is satisfied. Thus the measure $\mu_I$ is extremal. On the other hand, $\mu_I$ is not absolutely decaying (see below), so the extremality of $\mu_I$ does not follow from Weiss's theorem \cite{Weiss}.
\end{example}
\begin{proof}[Proof that $\mu_I$ is not absolutely decaying]
Fix $n\in\N$, and let $x_n = a^{-n}\in \cl{J_I}$. Then
\[
B\left(x_n,\frac{1}{a^n} - \frac{1}{a^n + 1}\right)\cap J_I = B\left(x_n,\frac{1}{a^n} - \frac{1}{a^{n + 1}}\right)\cap J_I.
\]
If $\mu$ were absolutely $\alpha$-decaying, we would therefore have
\begin{align*}
1 = \frac{\mu\left[B\left(x_n,\frac{1}{a^n} - \frac{1}{a^n + 1}\right)\right]}{\mu\left[B\left(x_n,\frac{1}{a^n} - \frac{1}{a^{n + 1}}\right)\right]}
&\leq C \left(\frac{\frac{1}{a^n} - \frac{1}{a^n + 1}}{\frac{1}{a^n} - \frac{1}{a^{n + 1}}}\right)^\alpha\\
&\asymp \left(\frac{1/a^{2n}}{1/a^n}\right)^\alpha = \frac{1}{a^{n\alpha}},
\end{align*}
which is a contradiction for $n$ large enough.
\end{proof}

The remainder of this section will be devoted to proving the following theorem:
\begin{theorem}
\label{theoremcounterexample}
There exists a measure $\mu$ invariant with respect to the Gauss map which gives full measure to the Liouville numbers. In particular, $\mu$ is not extremal.
\end{theorem}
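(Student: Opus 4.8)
The plan is to realize the required measure as the $G$-invariant measure $\mu_I$ attached, as in Section~\ref{sectionIFS}, to a carefully chosen infinite set $I\subseteq\N$. First one must see which $I$ can work. We need $\SS_I$ to be regular, so that $\mu_I$ and the equivalent conformal measure $m_I$ exist and $\mu_I$ is $G$-invariant (Proposition~\ref{propositioninvariantmeasure}). On the other hand, by Theorem~\ref{theoremextremal} every invariant measure of finite Lyapunov exponent is extremal, hence gives the Liouville numbers measure zero; and by the computation in the proof of Proposition~\ref{srimpfle}, $\chi_{\mu_I}\asymp\int\eta\dee m_I\asymp\sum_{i\in I}i^{-2h_I}\log(1+i)$, where $h_I:=\HD(J_I)$. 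So we are forced to choose $I$ with $\sum_{i\in I}i^{-2h_I}\log i=+\infty$; in particular $\SS_I$ cannot be strongly regular, which for a regular system means $h_I=\theta_I$ and $\sum_{i\in I}i^{-2h_I}<+\infty$, i.e. $P_I(\theta_I)=0$.

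Such a set can be built by hand. Take $I=\bigcup_j P_j$ with $P_j\subseteq[2^j,2^{j+1})$ of cardinality of order $2^j/j^2$. The exponent $2$ places us exactly on the boundary: $\sum_{i\in I}i^{-1}\asymp\sum_j j^{-2}<\infty$, so $\theta_I=1/2$ and (by Proposition~\ref{propositioncofinitelyregular}) $\SS_I$ is not cofinitely regular, while $\sum_{i\in I}i^{-1}\log i\asymp\sum_j j^{-1}=+\infty$. The remaining freedom in the sizes and the exact elements of the blocks $P_j$ is then spent on calibrating $\lambda_{1/2}(I):=e^{P_I(1/2)}$ to equal $1$: by Lemma~\ref{lemmaKZ} (with the remark following it, and Theorem~\ref{continuityofpressure}) adjoining an integer $i$ increases $\lambda_{1/2}$ by an amount lying between $(i+1)^{-1}$ and $2(i+2)^{-1}$, so working with blocks at large scales one can approach $1$ to within any prescribed error and arrange $\lambda_{1/2}(I)=1$ exactly. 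Then $h_I=\theta_I=1/2$, $\SS_I$ is regular, and $\sum_{i\in I}i^{-2h_I}\log i=+\infty$. I expect this calibration to be the main obstacle: it is a genuine (if standard-flavoured) construction, close in spirit to the one behind Theorem~\ref{theoremexistence}, and one must check that the growth constraint on $|P_j|$ really leaves room to hit $P_I(1/2)=0$ on the nose.

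With $I$ fixed, it remains to prove $\mu_I$-almost every $x$ is a Liouville number. Writing $[0;\omega_0,\omega_1,\ldots]$ for the continued fraction expansion of $x$, Corollary~\ref{corollaryliouville} says this is equivalent to: for every $c>0$ there are infinitely many $n$ with $\log(1+\omega_n)\ge c\,\eta_n$, where $\eta_n:=\sum_{j<n}\log(1+\omega_j)$. Since $\mu_I$ is equivalent to $m_I$, it suffices to prove this $m_I$-almost surely, and here the key input is the \emph{uniform} conditional lower bound given by inequality \eqref{220120504} of Lemma~\ref{lemmaSomega}: for every $\omega\in\N^n$ and every $k$,
\[
m_I\bigl(\omega_n>k\ \big|\ \omega_0,\ldots,\omega_{n-1}=\omega\bigr)\ \ge\ 4^{-h_I}\sum_{i\in I,\ i>k}i^{-2h_I}.
\]
Fix $c$, let $\mathcal F_n=\sigma(\omega_0,\ldots,\omega_{n-1})$, and let $A_n=\{\log(1+\omega_n)\ge c\eta_n\}\in\mathcal F_{n+1}$. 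Every continued fraction entry is $\ge1$, so $\eta_n\ge n\log 2$; applying the bound above with $k$ just below $e^{c\eta_n}$ gives
\[
m_I(A_n\mid\mathcal F_n)\ \ge\ 4^{-h_I}\sum_{i\in I,\ i\ge 2^{cn}}i^{-2h_I},
\]
and interchanging the order of summation,
\[
\sum_{n\ge1}m_I(A_n\mid\mathcal F_n)\ \gtrsim\ \sum_{i\in I}i^{-2h_I}\Bigl\lfloor\tfrac{\log_2 i}{c}\Bigr\rfloor\ \asymp\ \tfrac1c\sum_{i\in I}i^{-2h_I}\log i\ =\ +\infty .
\]
By the conditional Borel--Cantelli lemma (for a filtration $(\mathcal F_n)$ and events $A_n\in\mathcal F_{n+1}$, one has $\{A_n\text{ i.o.}\}=\{\sum_n m_I(A_n\mid\mathcal F_n)=\infty\}$ a.s.), $m_I(A_n\text{ i.o.})=1$ for each fixed $c$; intersecting over $c\in\N$ and invoking Corollary~\ref{corollaryliouville} shows that $m_I$-almost every $x$ is Liouville, hence so is $\mu_I$-almost every $x$. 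As $\mu_I$ is a $G$-invariant probability measure, this proves the theorem; the only step requiring real care is the calibration $P_I(\theta_I)=0$ in the construction of $I$.
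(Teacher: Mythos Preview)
Your conditional Borel--Cantelli argument has a genuine gap. From Lemma~\ref{lemmaSomega} you correctly get
\[
m_I(A_n\mid\mathcal F_n)\ \ge\ 4^{-h_I}\sum_{\substack{i\in I\\ i>e^{c\eta_n}}}i^{-2h_I},
\]
but the displayed replacement of the threshold $e^{c\eta_n}$ by $2^{cn}$ goes the wrong way: the lower bound $\eta_n\ge n\log 2$ gives $e^{c\eta_n}\ge 2^{cn}$, which makes the tail sum \emph{smaller}, not larger. Equivalently, after interchanging summation you need $\#\{n:\eta_n<c^{-1}\log i\}\gtrsim\log i$, i.e.\ $\eta_n=O(n)$; but your construction was designed precisely so that $\chi_{\mu_I}=\infty$, hence $\eta_n/n\to\infty$ a.s.\ by Birkhoff. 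What your argument actually needs is $\sum_n 1/\eta_n=\infty$ a.s.\ (since for your $I$ the tail satisfies $\sum_{i\in I,\,i>k}i^{-1}\asymp 1/\log k$, so $m_I(A_n\mid\mathcal F_n)\asymp 1/(c\eta_n)$). Whether this holds for sums of stationary variables with $1/t$ tails is a nontrivial question in its own right, and you have not addressed it; crude Borel--Cantelli bounds on the individual terms $\log(1+\omega_j)$ only give $\eta_n\lesssim n^2(\log n)^2$, which is far too weak.

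The paper sidesteps this difficulty by building $I$ with enormous gaps rather than with blocks at every dyadic scale. It arranges $I=\bigcup_N R_N$ with $R_N\subseteq[m_N,M_N]$ and $\log(1+m_{N+1})\ge N\cdot 4^N\log(1+M_N)$, calibrating $\lambda_\delta(I_N)\to 1$ using Lemma~\ref{lemmaKZ} much as you suggest. The gap structure means that any partial quotient exceeding $M_N$ is automatically $\ge m_{N+1}$, so one only has to show that the first $4^N$ entries are not all $\le M_N$. This is done by a \emph{direct} Borel--Cantelli: using \eqref{220120504} and the calibration $1-\lambda_\delta(I_N)\asymp 2^{-N}$, one gets $m_I(\text{all first }4^N\text{ entries}\le M_N)\le(1-c\,2^{-N})^{4^N}\le e^{-c\,2^N}$, which is summable. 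The huge gaps convert the resulting ``some entry $>M_N$'' into ``$\log(1+\omega_n)\ge N\sum_{j<n}\log(1+\omega_j)$'' immediately, yielding the Liouville condition without ever needing to control the growth of $\eta_n$.
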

The measure $\mu$ will be of the form $\mu_I$ for some $I\subset\N$ defining a regular system $\SS_I$.

Fix $0 < \delta \leq 1/2$, and define a sequence of finite subsets $I_N\subset\N$ recursively in the
following manner:
\begin{itemize}
\item[1.] Let $I_0 = \emptyset$.
\item[2.] Suppose that the set $I_{N - 1}$ has been defined. Let $M_{N - 1} = \max(I_{N - 1})$. (By convention let $\max(\emptyset) = 0$.)
\item[3.] Choose $m_N\in\N$ large enough so that:
\begin{align*}
\log(1 + m_N) &\geq N 4^N \log(1 + M_{N - 1})\\
\left(\frac{2}{m_N + 2}\right)^{2\delta} &\leq 2^{-N}.
\end{align*}
\item[4.] Let $R_N\subset\{m_N,\ldots\}$ be a finite set satisfying:
\begin{equation}
\label{inductionhypothesis}
1 - 2^{-(N - 1)} \leq \lambda_\delta(I_{N - 1}\cup R_N) < 1 - 2^{-N}.
\end{equation}
(The existence of such a set $R_N$ is verified below.)
\item[5.] Let $I_N = I_{N - 1}\cup R_N$ and then go back to step 2.
\end{itemize}
We now check that in step 4, it is always possible to find a set $R_N$ which satisfies (\ref{inductionhypothesis}). We first claim that
\begin{equation}
\label{lambdadeltamN}
\lambda_\delta(I_{N - 1}) < 1 - 2^{-N} < \lambda_\delta(I_{N - 1}\cup \{m_N,\ldots\}).
\end{equation}
Indeed, the left inequality follows from the induction hypothesis (or by direct computation in the case $N = 1$). The right hand side follows from Lemma~\ref{lemmaKZ} and the fact that the series
\[
\sum_{i = m_N}^\infty \left(\frac{1}{i + 1}\right)^{2\delta}
\]
diverges (since $\delta\leq 1/2$).

It follows from (\ref{lambdadeltamN}) that there exists $K\in\{m_N,\ldots\}$ so that
\begin{align*}
\lambda_\delta(I_{N - 1}\cup \{m_N,\ldots,K\}) &< 1 - 2^{-N} \\&\leq \lambda_\delta(I_{N - 1}\cup \{m_N,\ldots,K + 1\}).
\end{align*}
Let $R_N = \{m_N,\ldots,K\}$. By Lemma~\ref{lemmaKZ}, we have
\begin{align*}
\lambda_\delta(I_{N - 1}\cup &\{m_N,\ldots,K\})\geq\\
&\geq \lambda_\delta(I_{N - 1}\cup \{m_N,\ldots,K + 1\}) -
\left(\frac{2}{(K + 1) + 2}\right)^{2\delta}\\ 
&\geq 1 - 2^{-N} - 2^{-N}
= 1 - 2^{-(N - 1)}
\end{align*}
which demonstrates (\ref{inductionhypothesis}).

Let
\[
I = \bigcup_N I_N.
\]
By Theorem~\ref{continuityofpressure}, we have $\lambda_\delta(I) = 1$, and thus $P_I(\delta) = 0$. By Corollary~\ref{corollaryconformal} and Proposition~\ref{propositioninvariantmeasure} we have that $\HD(J_I) = \delta$, and that there exists a $\delta$-conformal measure $m_I$ and an absolutely continuous $G$-invariant measure $\mu_I$.
To complete the proof we need to show that $m_I$, and thus $\mu_I$, gives full measure to the set of Liouville numbers.
To this end, fix $N\in\N$. By Lemma~\ref{lemmaKZ} we have
\begin{equation}\label{120120505}
1 - \lambda_\delta(I\cap\{1,\ldots,M_N\})
\leq \sum_{\substack{i\in I \\ i > M_N}}\left(\frac{2}{2 + i}\right)^{2\delta}
\leq 4^\delta\sum_{\substack{i\in I \\ i > M_N}}\left(\frac{1}{1 + i}\right)^{2\delta}\cdot
\end{equation}
Fix $\omega = (\omega_j)_{j = 0}^{n - 1} \in \N^n$. It then follows from
\eqref{220120504} and \eqref{120120505} that
\[
\frac{m_I(S_{\omega,M_N}^+)}{m_I(S_\omega)} \leq 1 - \frac{1}{16^\delta}(1 - \lambda_\delta(I\cap\{1,\ldots,M_N\})),
\]
where $S_\omega$ and $S_{\omega,k}^+$ are defined as in Lemma~\ref{lemmaSomega}. Invoking (\ref{inductionhypothesis}) gives
\begin{equation}\label{220120505}
\frac{m_I(S_{\omega,M_N}^+)}{m_I(S_{\omega})} \leq 1 - c 2^{-N},
\end{equation}
where $c = 1/16^\delta$. Now for each $n\in\N$ let
\[
S_{n,N} = \{x\in\OI:\xi(G^j(x)) \leq M_N \all j = 0,\ldots,n - 1\}.
\]
Formula \eqref{220120505} yields
\[
\frac{m_I(S_{n + 1,N})}{m_I(S_{n,N})} \leq 1 - c 2^{-N}.
\]
Iterating yields
\[
m_I(S_{n,N}) \leq (1 - c 2^{-N})^n.
\]
Letting $n = 4^N$, we see that
\[
m_I(S_{4^N,N}) \leq e^{-c 2^N}
\]
and thus
\[
\sum_{N = 0}^\infty m_I(S_{4^N,N}) < \infty.
\]
Thus by the Borel-Cantelli lemma, $m_I$-almost every point $x\in J_I$ lies in only finitely many
sets of the form $S_{4^N,N}$. Fix such a point $x$, and we will show
that $x$ is a Liouville number. By Corollary~\ref{corollaryliouville}, it suffices to demonstrate that for all $c > 0$ we have 
\begin{equation}
\label{infinitelymany}
\eta(G^n(x)) \geq c \sum_{j = 0}^{n - 1}\eta(G^j(x))
\end{equation}
for infinitely many $n\in\N$, where $\eta$ is defined as in Definition~\ref{definitionxieta}. Indeed, for all but finitely many
$N\in\N$, we have $x\notin S_{4^N,N}$ and so there exists $n\leq 4^N$
such that $\xi(G^n(x)) > M_N$. Without loss of generality, we may
assume that $n$ is minimal with this property, i.e. $\xi(G^j(x)) \leq
M_N$ for all $j < n$. Now, since $I$ does not contain any numbers
between $M_N$ and $m_{N + 1}$, we have $\xi(G^n(x)) \geq m_{N + 1}$
and thus 
\begin{align*}
\eta(G^n(x)) &\geq \log(1 + m_{N + 1}) = N 4^N \log(1 + M_N)\\
&\geq N\sum_{j = 0}^{n - 1}\log(1 + \xi(G^j(x)))\\
&= N\sum_{j = 0}^{n - 1}\eta(G^j(x))
\end{align*}
which demonstrates that (\ref{infinitelymany}) has infinitely many solutions. Thus the proof of Theorem~\ref{theoremcounterexample} is complete.

\section{Combinatorial characterizations of Ahlfors regularity}\label{sectioncombinatorialcharacterization}
In this section we prove Theorem \ref{theoremcombinatorialcharacterization} which gives a combinatorial characterization of Ahlfors regularity of $J_I$. We begin by recalling the following theorems:

\begin{theorem}[Theorem 4.1 of \cite{MU1}]
\label{theoremlowerregular}
Fix a set $I\subset\N$, and suppose that the IFS $\SS_I$ is regular. Let $h = \HD(J_I)$, and let $m_I$ be an $h$-conformal measure. Then the following are equivalent:
\begin{itemize}
\item[(a)] $\HH^h(J_I) > 0$.
\item[(b)]~
\begin{equation}
\label{lowerregular1}
\sup_{k_1 < k_2}\frac{(k_1 k_2)^h}{(k_2 - k_1)^h}\sum_{\substack{i\in I \\ k_1\leq i\leq k_2}} i^{-2h} < \infty.
\end{equation}
\item[(c)] $m_I$ is Ahlfors $h$-lower regular i.e.
\[
m_I(B(x,r)) \lesssim r^h \all x\in J_I \all r\leq 1.
\]
\end{itemize}
\end{theorem}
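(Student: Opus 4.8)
The plan is to establish the four implications $(c)\Rightarrow(a)$, $(c)\Rightarrow(b)$, $(b)\Rightarrow(c)$, and $(a)\Rightarrow(c)$; together they give the full equivalence, with $(b)\Rightarrow(c)$ carrying essentially all of the geometry and the other three being comparatively soft. Throughout we use the bounded distortion estimate $\max_\OI|g_\omega'|\le 4\min_\OI|g_\omega'|$ recalled in the proof of Lemma~\ref{lemmaSomega}, which gives $\operatorname{diam}(S_\omega)\asymp\|g_\omega'\|_\infty$ and hence, by $h$-conformality of $m_I$, $m_I(S_\omega)\asymp\|g_\omega'\|_\infty^h\asymp\operatorname{diam}(S_\omega)^h$, the implied constants depending only on $h$. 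For a word $\omega$ and $1\le k_1\le k_2\le\infty$ put $U_\omega(k_1,k_2)=\bigcup_{i\in I,\ k_1\le i\le k_2}S_{\omega i}$. Since $S_{\omega i}=g_\omega([\tfrac1{i+1},\tfrac1i])$, the set $U_\omega(k_1,k_2)$ lies in the interval $g_\omega([\tfrac1{k_2+1},\tfrac1{k_1}])$, of length $\asymp\operatorname{diam}(S_\omega)\,\tfrac{k_2-k_1+1}{k_1k_2}$ (read as $\operatorname{diam}(S_\omega)/k_1$ when $k_2=\infty$), while additivity and conformality give $m_I(U_\omega(k_1,k_2))\asymp\operatorname{diam}(S_\omega)^h\sum_{i\in I,\ k_1\le i\le k_2}i^{-2h}$, using $\|g_i'\|_\infty=i^{-2}$.

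Two of the implications are essentially free. For $(c)\Rightarrow(a)$: if $m_I(B(x,r))\lesssim r^h$ for all $x\in J_I$ and $r\le 1$, the mass distribution principle gives $\HH^h(J_I)\gtrsim m_I(J_I)=1>0$. For $(c)\Rightarrow(b)$ we argue contrapositively: if \eqref{lowerregular1} fails, choose $k_1<k_2$ with $\tfrac{(k_1k_2)^h}{(k_2-k_1)^h}\sum_{i\in I,\ k_1\le i\le k_2}i^{-2h}$ as large as we please, let $r$ be the length of $[\tfrac1{k_2+1},\tfrac1{k_1}]$ (note $r<1$), and pick $x\in U_\emptyset(k_1,k_2)\cap J_I$, which is nonempty because the sum is positive; then $U_\emptyset(k_1,k_2)\subseteq B(x,r)$, so $m_I(B(x,r))/r^h\gtrsim\tfrac{(k_1k_2)^h}{(k_2-k_1)^h}\sum_{i\in I,\ k_1\le i\le k_2}i^{-2h}$ is unbounded, contradicting $(c)$.

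The core is $(b)\Rightarrow(c)$. Fix $x=[0;\omega_0,\omega_1,\ldots]\in J_I$ and $0<r\le 1$, and let $n$ be the largest index with $\operatorname{diam}(S_{\omega_0\cdots\omega_{n-1}})\ge r$, which exists since the first diameter is $1$ and diameters tend to $0$. The crux is a localization statement: $B(x,r)\cap J_I$ is covered by countably many pieces, each either a cylinder $S_\tau$ with $\operatorname{diam}(S_\tau)\lesssim r$ or a tail $U_\sigma(k,\infty)$ with $\operatorname{diam}(S_\sigma)/k\lesssim r$, with boundedly many pieces at each dyadic scale and scales decaying geometrically inside each ancestor cylinder of $x$, so that the measures of the pieces form a summable series. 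Granting this, conformality bounds a cylinder piece by $m_I(S_\tau)\asymp\operatorname{diam}(S_\tau)^h\lesssim r^h$, and a tail piece by
\[
m_I\bigl(U_\sigma(k,\infty)\bigr)\ \asymp\ \operatorname{diam}(S_\sigma)^h\!\!\sum_{i\in I,\ i\ge k}\!\! i^{-2h}\ =\ \Bigl(\frac{\operatorname{diam}(S_\sigma)}{k}\Bigr)^{\!h}k^h\!\!\sum_{i\in I,\ i\ge k}\!\! i^{-2h}\ \lesssim\ r^h,
\]
the final step being \eqref{lowerregular1} in the limiting form $\sup_{k\ge1}k^h\sum_{i\in I,\ i\ge k}i^{-2h}<\infty$, obtained by letting $k_2\to\infty$. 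Summing the geometrically controlled contributions gives $m_I(B(x,r))\lesssim r^h$. Proving the localization statement is an elementary but delicate case analysis of the position of $x$ and $B(x,r)$ relative to the cylinders of generations $n-1,n,n+1$ and their sub-cylinders; the feature that makes it nontrivial — and that I expect to be the main obstacle — is that, unlike in a finite IFS, infinitely many sub-cylinders $S_{\sigma i}$ pile up at the point $g_\sigma(0)$, and the part of $B(x,r)$ near such an accumulation point is precisely what yields a tail piece.

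It remains to prove $(a)\Rightarrow(c)$, closing the cycle. Assume $\HH^h(J_I)>0$; since $\HH^h(J_I)<\infty$ always, Proposition~\ref{propositionHausdorffpacking}(a) identifies $m_I=\HH^h\given_{J_I}/\HH^h(J_I)$. If $(c)$ failed there would be balls $B(x_0,r_0)$ with $m_I(B(x_0,r_0))/r_0^h$ arbitrarily large; the conformality relation $m_I(g_\omega(A))=\int_A|g_\omega'|^h\dee m_I$ together with bounded distortion turns any such ball into a ball $B(g_\omega(x_0),\|g_\omega'\|_\infty r_0)$ with comparably large ratio and arbitrarily small radius, and since $m_I$-almost every point of $J_I$ has dense forward $G$-orbit (ergodicity of $m_I$ together with $\operatorname{supp}m_I=J_I$), we deduce — using words $\omega$ that first approximate a given $y$ and then shadow its orbit back toward $x_0$ — that $\limsup_{r\to0}m_I(B(y,r))/r^h=\infty$, hence $\limsup_{r\to0}\HH^h(J_I\cap B(y,r))/r^h=\infty$, for $\HH^h$-almost every $y\in J_I$. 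The density theorem for Hausdorff measures then forces $\HH^h(J_I)=0$, a contradiction. Hence $(c)$ holds, and the equivalence of $(a)$, $(b)$, $(c)$ follows.
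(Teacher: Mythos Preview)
The paper itself does not give a self-contained argument here: it cites \cite{MU1} for $(a)\Leftrightarrow(b)$, extracts $(a)\Rightarrow(c)$ from the proof of Theorem~4.5.3 in \cite{MU2}, and invokes the mass distribution principle for $(c)\Rightarrow(a)$. Your implications $(c)\Rightarrow(a)$ and $(c)\Rightarrow(b)$ are fine, and your density--ergodicity argument for $(a)\Rightarrow(c)$ is a reasonable (if sketchy) variant of what lies behind the cited proof.

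The real issue is in $(b)\Rightarrow(c)$. Your localization statement, as formulated, is not correct, and your argument uses only the \emph{tail} consequence of \eqref{lowerregular1} (the case $k_2\to\infty$), which is strictly weaker than \eqref{lowerregular1}. Concretely: take the level-$n$ word $\sigma=\omega_0\cdots\omega_{n-1}$ with $D:=\operatorname{diam}(S_\sigma)\ge r$, put $j=\omega_n$, and suppose $B(x,r)$ extends from $S_{\sigma j}$ toward the accumulation endpoint $g_\sigma(0)$ but does not reach it. Then $B(x,r)$ fully covers $S_{\sigma i}$ for $j<i\le k_2$ with $k_2<\infty$, and the number of such $i$ is comparable to $j$, all of them lying at the same dyadic scale $\asymp Dj^{-2}$; so ``boundedly many pieces at each dyadic scale'' fails. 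Nor can you absorb this block into a tail: since $g_\sigma(0)\notin B(x,r)$ one has $D/j\gtrsim r$, whence $D/k_1\gtrsim r$ for the relevant $k_1$, violating your tail constraint $\operatorname{diam}(S_\sigma)/k\lesssim r$. What is actually needed is a third type of piece, a finite block $U_\sigma(k_1,k_2)$ of length $\asymp D\,\tfrac{k_2-k_1}{k_1k_2}\lesssim r$, whose measure one bounds using the \emph{full} inequality \eqref{lowerregular1}:
\[
m_I\bigl(U_\sigma(k_1,k_2)\bigr)\ \asymp\ D^h\!\!\sum_{\substack{i\in I\\ k_1\le i\le k_2}}\!\! i^{-2h}\ \lesssim\ D^h\,\frac{(k_2-k_1)^h}{(k_1k_2)^h}\ \lesssim\ r^h.
\]
This is exactly why \eqref{lowerregular1} is stated for all pairs $k_1<k_2$, and why in Theorem~\ref{theoremlowerregularnew} the condition equivalent to $(b)$ is the conjunction of a tail bound \emph{and} a local counting bound $\#(B(y,r)\cap I)\lesssim r^h$. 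Once you add finite-block pieces to your covering and invoke \eqref{lowerregular1} for them, the rest of your outline for $(b)\Rightarrow(c)$ goes through.
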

\begin{proof}
The equivalence of (a) and (b) is proven in Theorem 4.1 of \cite{MU1}. The implication (a)$\Rightarrow$(c) follows from the last line of the proof of the implication (c)$\Rightarrow$(a) of Theorem 4.5.3 of \cite{MU2} (just before the mass distribution principle is applied), and the implication (c)$\Rightarrow$(a) is the mass distrubution principle.
\end{proof}

\begin{theorem}[Theorem 5.1 of \cite{MU1}]
\label{theoremupperregular}
Fix an infinite set $I\subset\N$, and suppose that the IFS $\SS_I$ is regular. Let $h = \HD(J_I)$, and let $m_I$ be an $h$-conformal measure. Then the following are equivalent:
\begin{itemize}
\item[(a)] $\PP^h(J_I) < \infty$.
\item[(b)] Both of the following hold:
\begin{align*}
\inf_{\substack{k_1 < k_2 \\ B\left(\frac{2 k_1 k_2}{k_1 + k_2},1\right)\cap I\neq\emptyset}}\frac{(k_1 k_2)^h}{(k_2 - k_1)^h}\sum_{\substack{i\in I \\ k_1\leq i\leq k_2}} i^{-2h} &> 0\\
\inf_{k\geq 1}k^h \sum_{\substack{i\in I \\ i\geq k}}i^{-2h} &> 0.
\end{align*}
\item[(c)] $m_I$ is Ahlfors $h$-upper regular i.e.
\[
m_I(B(x,r)) \gtrsim r^h \all x\in J_I \all r\leq 1.
\]
\end{itemize}
\end{theorem}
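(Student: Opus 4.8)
The plan is to establish the cycle $(c)\Rightarrow(a)\Rightarrow(b)\Rightarrow(c)$. The implications $(b)\Rightarrow(c)$ and (the contrapositive of) $(a)\Rightarrow(b)$ both rest on a single \emph{renormalization identity} that expresses the $m_I$-measure of an arbitrary ball in terms of a ``base configuration'' governed by the set $I$ near one continued fraction digit; the implication $(c)\Rightarrow(a)$ is a soft consequence of the classical density theorem for packing measures.

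\emph{The renormalization identity.} Fix $x\in J_I$ and $0<r\le 1$, and let $S_\omega$ be the \emph{smallest} cylinder (with $\omega$ an initial segment of the continued fraction expansion of $x$) containing $B(x,r)\cap J_I$. Put $x'=g_\omega^{-1}(x)=G^{|\omega|}(x)\in J_I$ and $\rho=r/\operatorname{diam}(S_\omega)$. Using the bounded--distortion estimate $\max_{\OI}|g_\omega'|\le 4\min_{\OI}|g_\omega'|$ from the proof of Lemma~\ref{lemmaSomega} (so $|g_\omega'|\asymp\operatorname{diam}(S_\omega)$, and, by $h$-conformality, $m_I\circ g_\omega\asymp\operatorname{diam}(S_\omega)^h\,m_I$ on Borel subsets of $\OI$) one obtains $m_I(B(x,r))/r^h\asymp m_I(B(x',\rho'))/(\rho')^h$ for some $\rho'\asymp\rho$. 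By minimality of $S_\omega$, the ball $B(x',\rho)$ meets at least two first--level cylinders $S_j$, namely exactly those with $j$ in an interval $[k_1,k_2]$ ($k_2\le\infty$) containing $\xi(x')\in I$, and $\rho\asymp\frac1{k_1}-\frac1{k_2}$. Since $m_I(S_j)=\int|g_j'|^h\dee m_I\asymp j^{-2h}$, and the two extreme cylinders $S_{k_1},S_{k_2}$ contribute at most a bounded multiple of $\sum_{j\in[k_1,k_2]\cap I}j^{-2h}$ --- this last point being the only genuinely delicate piece of bookkeeping --- we get
\begin{equation}\label{planeq}
\frac{m_I(B(x,r))}{r^h}\ \asymp\
\begin{cases}
\dfrac{(k_1k_2)^h}{(k_2-k_1)^h}\,\displaystyle\sum_{j\in[k_1,k_2]\cap I}j^{-2h}, & k_2<\infty,\\[4mm]
k_1^h\,\displaystyle\sum_{\substack{j\in I\\ j\ge k_1}}j^{-2h}, & k_2=\infty.
\end{cases}
\end{equation}
In the first case $\xi(x')\in[k_1,k_2]\cap I$ lies within a bounded distance of $\tfrac{2k_1k_2}{k_1+k_2}$, so $B\!\big(\tfrac{2k_1k_2}{k_1+k_2},1\big)\cap I\ne\emptyset$; conversely every $(k_1,k_2)$ satisfying this last condition, and every $k_1$ in the tail case, is realized by a genuine choice of $x'\in J_I$ (with first digit $\asymp$ the relevant harmonic mean, resp.\ arbitrarily large) and of $\rho$.

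\emph{$(b)\Rightarrow(c)$ and $(c)\Rightarrow(a)$.} If the two infima in $(b)$ are $\ge\delta>0$, then \eqref{planeq} gives $m_I(B(x,r))\gtrsim\delta\,r^h$ for all $x\in J_I$, $r\le 1$, which is $(c)$. If $(c)$ holds, then $\liminf_{r\to 0}m_I(B(x,r))/r^h\gtrsim 1$ for every $x\in J_I$, and the packing density theorem (a uniform lower $h$-density bound on a set $E$ yields $\PP^h(E)\lesssim m_I(E)$; see e.g.\ Mattila's book) gives $\PP^h(J_I)\lesssim m_I(J_I)=1<\infty$, which is $(a)$.

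\emph{$(a)\Rightarrow(b)$ --- the main obstacle.} We argue by contraposition, and this is where the real work lies. Suppose one of the infima in $(b)$ fails. Running \eqref{planeq} backwards produces genuine points $z_l\in J_I$ and radii $\rho_l\to 0$ with $m_I(B(z_l,\rho_l))/\rho_l^h\to 0$. By ergodicity of $\mu_I$ (hence of the equivalent measure $m_I$) under $G$ --- Proposition~\ref{propositioninvariantmeasure} --- $m_I$-a.e.\ $x$ contains, infinitely often in its continued fraction expansion, an initial segment of the expansion of each $z_l$ long enough to reproduce the bad configuration; let this segment begin at position $n_l$, so that $G^{n_l}(x)$ agrees with $z_l$ through the relevant digits. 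Applying the renormalization identity to the cylinder $S_{x|_{n_l}}$ shows that the ball about $x$ of radius $\rho_l\operatorname{diam}(S_{x|_{n_l}})\to 0$ has density ratio $\asymp m_I(B(z_l,\rho_l))/\rho_l^h\to 0$; a diagonal choice of the $n_l$ then gives $\liminf_{\rho\to 0}m_I(B(x,\rho))/\rho^h=0$ for $m_I$-a.e.\ $x$. Feeding this into the converse direction of the packing density theorem --- a set of positive $m_I$-measure on which the lower $h$-density of $m_I$ vanishes has infinite $h$-dimensional packing measure --- yields $\PP^h(J_I)=\infty$, the negation of $(a)$. The crux is thus \eqref{planeq} itself, specifically the uniform control of the boundary cylinders $S_{k_1},S_{k_2}$: it is exactly this edge effect that forces the packing criterion $(b)$ to split into two infima, carry the harmonic--mean restriction, and include a separate tail term, in contrast with the single supremum of the Hausdorff criterion in Theorem~\ref{theoremlowerregular}.
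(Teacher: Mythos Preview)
Your sketch is essentially correct and, as far as one can tell, follows the same mechanism that underlies the cited references --- but note that the paper itself gives no argument here: it simply records that (a)$\Leftrightarrow$(b) is Theorem~5.1 of \cite{MU1}, that (a)$\Rightarrow$(c) comes from the proof of Theorem~4.5.5 of \cite{MU2}, and that (c)$\Rightarrow$(a) is the mass distribution principle for packing measure. Your (c)$\Rightarrow$(a) matches the paper exactly; your renormalization identity \eqref{planeq} is precisely the computation that drives the arguments in \cite{MU1,MU2}; and your ergodicity route for $\lnot$(b)$\Rightarrow\lnot$(a) --- propagating a single bad base-level density ratio to $m_I$-a.e.\ point and then invoking the converse density theorem --- is the natural way to close the cycle.

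Two places deserve a little more care, though neither is a genuine gap. First, your computation shows $\xi(x')$ lies within a \emph{bounded} distance (roughly~$2$) of $2k_1k_2/(k_1+k_2)$, whereas the stated condition in (b) uses radius~$1$; you should remark that the infimum in (b) is insensitive to replacing $1$ by any fixed constant, since shifting $k_1,k_2$ by $O(1)$ changes both numerator and sum by bounded multiplicative factors. Second, in the ergodicity step you need $G^{n_l}(x)$ to agree with $z_l$ on enough initial digits that $|G^{n_l}(x)-z_l|$ is small relative to $\rho_l$, not merely to match the first digit; since any finite cylinder around $z_l$ has positive $m_I$-measure this is immediate from ergodicity, but the length $L_l$ of the required match should be made explicit before the diagonal choice of $n_l$. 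The ``converse'' realizability claim --- that every admissible $(k_1,k_2)$ arises from some $z\in J_I$ --- similarly holds only up to an $O(1)$ shift in $k_1,k_2$, which is harmless for the same reason as in the first point.
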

\noindent Note that the assumption that $I$ is infinite is necessary in this theorem since any finite IFS satisfies (a) and (c) but not (b).
\begin{proof}
The equivalence of (a) and (b) is proven in Theorem 5.1 of \cite{MU1}. The implication (a)$\Rightarrow$(c) follows from the last line of the proof of the implication (c)$\Rightarrow$(a) of Theorem 4.5.5 of \cite{MU2} (just before the mass distribution principle for packing measure is applied), and the implication (c)$\Rightarrow$(a) is the mass distrubution principle for packing measure.
\end{proof}

We can add new equivalences to Theorems \ref{theoremlowerregular} and \ref{theoremupperregular} as follows:
\begin{theorem}
\label{theoremlowerregularnew}
\textup{(a)-(c)} of Theorem \ref{theoremlowerregular} are equivalent to the following:
\begin{itemize}
\item[(d)] Both \textup{(i)} and \textup{(ii)} hold:
\begin{itemize}
\item[(i)] For all $y\in\N$ and for all $1\leq r\leq y/2$,
\[
\#(B(y,r)\cap I) \lesssim r^h.
\]
\item[(ii)] For all $k\in\N$,
\[
\sum_{\substack{i\in I\\ i > k}}i^{-2h} \lesssim k^{-h}.
\]
\end{itemize}
\end{itemize}
\end{theorem}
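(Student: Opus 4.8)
The plan is to prove the chain (b)~$\Rightarrow$~(d) and (d)~$\Rightarrow$~(b), where (b) is condition~\eqref{lowerregular1} from Theorem~\ref{theoremlowerregular}. Both directions are elementary manipulations of the defining sums, once one unpacks what the counting function $\#(B(y,r)\cap I)$ and the tail sum $\sum_{i\in I,\,i>k} i^{-2h}$ mean in terms of the ``dyadic'' pieces of~$I$. The key heuristic is that on an interval $[k_1,k_2]$ with $k_2\asymp k_1$ (so that $k_2-k_1$ is comparable to the whole interval), each term $i^{-2h}$ is comparable to $k_1^{-2h}\asymp k_2^{-2h}$, so $\sum_{k_1\le i\le k_2,\,i\in I} i^{-2h}\asymp k_1^{-2h}\#([k_1,k_2]\cap I)$; meanwhile the prefactor $(k_1k_2)^h/(k_2-k_1)^h$ is comparable to $k_1^{2h}/r^h$ where $r\asymp k_2-k_1$. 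Thus~\eqref{lowerregular1}, restricted to such ``balanced'' intervals, is exactly the statement $\#(B(y,r)\cap I)\lesssim r^h$ of (d)(i). The content of the argument is to reduce the general (unbalanced) intervals in~\eqref{lowerregular1} to balanced ones plus tail estimates.

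First I would prove (d)~$\Rightarrow$~(b). Given $k_1<k_2$, split into two cases. If $k_2\le 2k_1$, write the sum $\sum_{k_1\le i\le k_2,\,i\in I} i^{-2h}$ over dyadic sub-blocks $[k_1,k_1+2^j(k_2-k_1)]\setminus[k_1,k_1+2^{j-1}(k_2-k_1)]$... actually more simply: cover $[k_1,k_2]$ by $O(1)$ intervals, each of the form $B(y,r)$ with $r=k_2-k_1$ and $y\in[k_1,k_2]$, so $r\le y/2$ is automatic since $k_2-k_1\le k_1\le y$; apply (d)(i) to bound $\#([k_1,k_2]\cap I)\lesssim (k_2-k_1)^h$, and since every $i$ in range satisfies $i^{-2h}\le k_1^{-2h}$, we get $(k_1k_2)^h(k_2-k_1)^{-h}\sum i^{-2h}\lesssim k_1^{2h}(k_2-k_1)^{-h}\cdot k_1^{-2h}(k_2-k_1)^h=1$. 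If instead $k_2>2k_1$, then $k_2-k_1\asymp k_2$, so the prefactor is $\asymp k_1^h$, and we bound $\sum_{k_1\le i\le k_2,\,i\in I} i^{-2h}\le\sum_{i\in I,\,i>k_1-1} i^{-2h}\lesssim k_1^{-h}$ by (d)(ii), giving the bound $\lesssim 1$ again. Taking the sup over all $k_1<k_2$ yields~\eqref{lowerregular1}.

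For the converse (b)~$\Rightarrow$~(d): to get (d)(ii), take $k_2\to\infty$ in~\eqref{lowerregular1} with $k_1=k+1$ fixed; since $(k_1k_2)^h/(k_2-k_1)^h\to k_1^h$ and $\sum_{k_1\le i\le k_2} i^{-2h}\to\sum_{i\ge k_1} i^{-2h}$ (monotone convergence, and finiteness is guaranteed by~\eqref{lowerregular1}), one obtains $k^h\sum_{i\in I,\,i>k} i^{-2h}\lesssim 1$. To get (d)(i), given $y\in\N$ and $1\le r\le y/2$, set $k_1=\lceil y-r\rceil$ and $k_2=\lfloor y+r\rfloor$, so $k_2-k_1\asymp r$ and $k_1\asymp k_2\asymp y$ (using $r\le y/2$); then $B(y,r)\cap I\subseteq[k_1,k_2]\cap I$, every $i\in[k_1,k_2]$ has $i^{-2h}\ge k_2^{-2h}\asymp y^{-2h}$, and~\eqref{lowerregular1} gives $1\gtrsim (k_1k_2)^h(k_2-k_1)^{-h}\sum_{k_1\le i\le k_2} i^{-2h}\gtrsim y^{2h} r^{-h}\cdot y^{-2h}\#(B(y,r)\cap I)=r^{-h}\#(B(y,r)\cap I)$, i.e. $\#(B(y,r)\cap I)\lesssim r^h$.

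The main obstacle, such as it is, is purely bookkeeping: keeping the constants in the $\asymp$ relations between $k_1,k_2,y,r$ uniform, and making sure the case division at $k_2\lessgtr 2k_1$ (or wherever one draws the line) correctly handles the transition between the ``balanced'' regime, where (d)(i) does the work, and the ``long tail'' regime, where (d)(ii) does the work. I would also remark that the restriction $r\le y/2$ in (d)(i) is exactly what makes $k_1\asymp k_2$ in the balanced case; without it the ball $B(y,r)$ could reach down near~$0$ and one would really be in the tail regime covered by (d)(ii) instead. No genuinely hard estimate is involved, since the hard analytic input (the equivalence of (a), (b), (c)) is already supplied by Theorem~\ref{theoremlowerregular}.
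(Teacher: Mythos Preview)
Your proposal is correct and follows essentially the same route as the paper: split the (d)$\Rightarrow$(b) direction into a ``balanced'' case (where $k_1\asymp k_2$ and (d)(i) applies) and a ``long'' case (where $k_2-k_1\asymp k_2$ and (d)(ii) applies), and for (b)$\Rightarrow$(d) specialize $k_1,k_2$ appropriately. The paper in fact only writes out (d)$\Rightarrow$(b), choosing $y\in I\cap[k_1,k_2]$, setting $r=\max(k_2-y,\,y-k_1)$, and casing on $r\le y/2$ rather than on $k_2\le 2k_1$; your direct split on $k_2\le 2k_1$ is equivalent. One small slip: from $k_2-k_1\le k_1\le y$ you only get $r\le y$, not $r\le y/2$, so either draw the line at $k_2\le \tfrac32 k_1$ or cover $[k_1,k_2]$ by two balls of radius $(k_2-k_1)/2$ to make the hypothesis of (d)(i) hold.
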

\begin{theorem}
\label{theoremupperregularnew}
\textup{(a)-(c)} of Theorem \ref{theoremupperregular} are equivalent to the following:
\begin{itemize}
\item[(d)] Both \textup{(i)} and \textup{(ii)} hold:
\begin{itemize}
\item[(i)] For all $y\in I$ and for all $1\leq r\leq y/2$,
\[
\#(B(y,r)\cap I) \gtrsim r^h.
\]
\item[(ii)] For all $k\in\N$,
\[
\sum_{\substack{i\in I\\ i > k}}i^{-2h} \gtrsim k^{-h}.
\]
\end{itemize}
\end{itemize}
\end{theorem}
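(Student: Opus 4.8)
The plan is to prove that condition~(d) is equivalent to condition~(b) of Theorem~\ref{theoremupperregular}; since that theorem already gives (a)$\Leftrightarrow$(b)$\Leftrightarrow$(c), this suffices. The two clauses of~(b) will be matched separately to~(i) and~(ii) of~(d). The equivalence of the second clause of~(b), namely $\inf_{k\ge1}k^h\sum_{i\in I,\,i\ge k}i^{-2h}>0$, with~(d)(ii) is a routine reindexing: one uses $\sum_{i>k}=\sum_{i\ge k+1}$, the comparisons $k^{-h}\asymp(k+1)^{-h}\asymp(k-1)^{-h}$ (valid since $h=\HD(J_I)\le1$), and, for the base case $k=1$ in the direction (d)(ii)$\Rightarrow$(b), the fact that $\sum_{i\in I}i^{-2h}$ is a fixed positive quantity. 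So the real content is the equivalence of the first clause of~(b) with~(d)(i).

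For (b)$\Rightarrow$(d)(i): given $y\in I$ and $1\le r\le y/2$, first dispose of the case where $r$ (hence $y$) is bounded, where $\#(B(y,r)\cap I)\ge1\gtrsim r^h$ trivially. Otherwise set $k_1=y-\lceil r/4\rceil$ and let $k_2$ be the integer for which the harmonic mean $m(k_1,k_2)=2k_1k_2/(k_1+k_2)$ is closest to~$y$; such a $k_2$ exists because, with $k_1$ fixed, $m(k_1,\cdot)$ increases from (the limiting value) $k_1<y$ toward $2k_1>y$ in steps of size at most~$1$, so $|y-m(k_1,k_2)|<1$ and $[k_1,k_2]$ is admissible in the infimum defining the first clause of~(b). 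One checks directly that $k_1,k_2\asymp y$ and $k_2-k_1\asymp r$; hence $i^{-2h}\asymp y^{-2h}$ for $i\in[k_1,k_2]$, so the first clause of~(b) rearranges to $\#([k_1,k_2]\cap I)\gtrsim(k_2-k_1)^h\asymp r^h$. Since $[k_1,k_2]\subseteq B(y,Cr)$ for an absolute constant~$C$, (d)(i) follows after replacing $r$ by $Cr$ (the values $r<C$ being trivial as above).

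For (d)(i)$\Rightarrow$(b), first clause: let $[k_1,k_2]$ be admissible, so there is $y\in I$ with $|y-m|<1$ where $m=2k_1k_2/(k_1+k_2)$; since $m\in(k_1,k_2)$ this forces $y\in[k_1,k_2]\cap I$. I would bound $S:=\sum_{i\in I,\,k_1\le i\le k_2}i^{-2h}$ from below in two complementary ways. First, keeping only the term $i=y$ and using $y\le m+1\le2m$, one obtains $\frac{(k_1k_2)^h}{(k_2-k_1)^h}\,S\ge\frac{(k_1k_2)^h}{(k_2-k_1)^h}\,y^{-2h}\gtrsim\bigl(\tfrac{k_2}{(k_2-k_1)k_1}\bigr)^h$, which is $\gtrsim1$ whenever $k_1$ is bounded (as $\tfrac{k_2}{(k_2-k_1)k_1}\ge\tfrac1{k_1}$) or $k_2-k_1$ is bounded (as $\tfrac{k_2}{(k_2-k_1)k_1}\ge\tfrac1{k_2-k_1}$). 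Second, the ball $B(y,\rho)$ with $\rho=\min(y-k_1,\,k_2-y,\,y/2)$ lies in $[k_1,k_2]$, all of its points are $\le y+\rho\lesssim m$, and $\rho\ge\rho_1-1$ with $\rho_1:=(k_2-k_1)k_1k_2/(k_1+k_2)^2$; so by (d)(i) its mass is $\gtrsim\rho^h$, giving $\frac{(k_1k_2)^h}{(k_2-k_1)^h}\,S\ge\frac{(k_1k_2)^h}{(k_2-k_1)^h}\,\#(B(y,\rho)\cap I)\,(y+\rho)^{-2h}\gtrsim(\rho/\rho_1)^h$. When both $k_1$ and $k_2-k_1$ exceed a suitable absolute constant, one checks $\rho_1\ge\tfrac19\min(k_1,k_2-k_1)$ is large, so $\rho\ge\rho_1/2$ and this bound is $\gtrsim1$. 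In every case the first clause of~(b) holds.

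The step I expect to be the main obstacle is organizing this case analysis cleanly — recognizing that for ``long'' intervals ($k_2\gg k_1$) it is the terms near the left endpoint that control $S$, and verifying that the radius $\rho=\min(y-k_1,k_2-y,y/2)$ of the ball inscribed in $[k_1,k_2]$ and centered at the element of $I$ nearest the harmonic mean is comparable to $\rho_1$ whenever $\rho_1$ exceeds an absolute threshold. The remaining bookkeeping (the additive~$1$'s from $|y-m|<1$ and integrality, the constant rescaling of $r$, and the boundedly many degenerate configurations) is routine; the argument parallels, with all inequalities reversed and with~(d)(i) restricted to centers $y\in I$, that of Theorem~\ref{theoremlowerregularnew}.
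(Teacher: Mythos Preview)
Your proposal is correct. The paper itself proves almost nothing here: it only illustrates the implication (d)$\Rightarrow$(b) for the companion lower-regular theorem (Theorem~\ref{theoremlowerregularnew}) and explicitly leaves all other implications, including the entirety of Theorem~\ref{theoremupperregularnew}, to the reader. You have supplied a complete argument.

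Your route differs from the paper's illustrated one in its organization. In the paper's sketch (lower case), given $k_1<k_2$ one picks $y\in I\cap[k_1,k_2]$, sets $r=\max(k_2-y,y-k_1)$, and splits according to whether $r\le y/2$ (apply (d)(i)) or $r>y/2$ (which forces $k_2-k_1\asymp k_2$, then apply (d)(ii)). Dualizing this directly is awkward because the ball $B(y,r)$ \emph{contains} $[k_1,k_2]$, which is the wrong inclusion for a lower bound. You instead inscribe the ball, taking $\rho=\min(y-k_1,k_2-y,y/2)$, and handle the degenerate configurations (small $k_1$ or small $k_2-k_1$) not via (d)(ii) but via the trivial single-term estimate $S\ge y^{-2h}$. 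The upshot is that in your argument (d)(i) alone is equivalent to the first clause of~(b), and (d)(ii) alone to the second clause; the paper's style of argument mixes them. Your matching is cleaner conceptually, at the price of the extra bookkeeping around $\rho_1=(k_2-k_1)k_1k_2/(k_1+k_2)^2$ that you correctly identify as the main organizational step.
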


\begin{proof}[Proof of Theorems \ref{theoremlowerregularnew} and \ref{theoremupperregularnew}]
By way of illustration we shall show that (d) of Theorem \ref{theoremlowerregularnew} implies (b) of Theorem \ref{theoremlowerregular}. The proof of the other implications are left to the reader.

Fix $k_1 < k_2$. If $I\cap[k_1,k_2] = \emptyset$, then the pair $(k_1,k_2)$ does not contribute to the supremum (\ref{lowerregular1}). Thus, suppose that $I\cap[k_1,k_2]\neq\emptyset$, and fix $y\in I\cap[k_1,k_2]$. Let $r = \max(k_2 - y,y - k_1)$. If $r\leq y/2$, then we have
\begin{align*}
\frac{2}{3}k_2 &\leq y \leq 2k_1\\
r &\leq k_2 - k_1 \leq 2r
\end{align*}
and thus by (d)(i) of Theorem \ref{theoremlowerregularnew} we have
\begin{align*}
\frac{(k_1 k_2)^h}{(k_2 - k_1)^h}\sum_{\substack{i\in I \\ k_1\leq i\leq k_2}} i^{-2h}
&\asymp \frac{(y^2)^h}{r^h}\sum_{\substack{i\in I \\ k_1\leq i\leq k_2}}y^{-2h}\\
&= r^{-h}\#(I\cap[k_1,k_2])\\
&\leq r^{-h}\#(I\cap B(y,r))
\lesssim r^{-h} r^h = 1.
\end{align*}
On the other hand, suppose that $r\geq y/2$. Then
\begin{align*}
k_2 - k_1 &\geq r\geq \frac{k_1}{2}\\
k_2 &\geq \frac{3}{2}k_1\\
k_2 - k_1 &\geq \frac{k_2}{3}
\end{align*}
and thus by (d)(ii) of Theorem \ref{theoremlowerregularnew} we have
\begin{align*}
\frac{(k_1 k_2)^h}{(k_2 - k_1)^h}\sum_{\substack{i\in I \\ k_1\leq i\leq k_2}} i^{-2h}
&\leq \frac{(k_1 k_2)^h}{(k_2/3)^h}\sum_{\substack{i\in I \\ i\geq k_1}} i^{-2h}\\
&\asymp k_1^h \sum_{\substack{i\in I \\ i\geq k_1}} i^{-2h}
\lesssim k_1^h k_1^{-h} = 1
\end{align*}
Thus either way we have
\[
\frac{(k_1 k_2)^h}{(k_2 - k_1)^h}\sum_{\substack{i\in I \\ k_1\leq i\leq k_2}} i^{-2h} \lesssim 1,
\]
which is equivalent to (\ref{lowerregular1}).
\end{proof}

If we restrict our attention to sets $I$ which satisfy both the conditions of Theorem~\ref{theoremlowerregular} and those of Theorem~\ref{theoremupperregular}, then we get even more characterizations:
\begin{theorem}
\label{theoremcombinatorialcharacterization}
Fix an infinite set $I\subset\N$, and let $h = h_I = \HD(J_I)$. Then \text{(a)-(c3)} are equivalent and imply \text{(d)-(e)}:
\begin{itemize}
\item[(a)] $\HH^h(J_I) > 0$ and $\PP^h(J_I) < \infty$.
\item[(b1)] $\HH^h\given_{J_I}$ is Ahlfors $h$-regular.
\item[(b2)] $\PP^h\given_{J_I}$ is Ahlfors $h$-regular.
\item[(b3)] The IFS $\SS_I$ is regular and $m_I$ is Ahlfors $h$-regular.
\item[(b4)] The IFS $\SS_I$ is regular and $\mu_I$ is Ahlfors $h$-regular.
\item[(c1)] For all $y\in I$ and $r\geq 1$
\begin{equation}
\label{combinatoriallyregular}
\#(B(y,r)\cap I) \asymp r^h.
\end{equation}
\item[(c2)] Both of the following hold:
\begin{itemize}
\item[(i)] \textup{(\ref{combinatoriallyregular})} holds for all $y\in I$ and $1\leq r\leq y/2$.
\item[(ii)] There exists $m\in\N$ such that for all $k\in\N$, we have $[k,mk]\cap I\neq\emptyset$.
\end{itemize}
\item[(c3)] Both of the following hold:
\begin{itemize}
\item[(i)] \textup{(\ref{combinatoriallyregular})} holds for all $y\in I$ and $1\leq r\leq y/2$.
\item[(ii)] For all $k\in\N$ we have
\begin{equation}
\label{hItail}
\sum_{\substack{i\in I\\ i > k}}i^{-2h} \asymp k^{-h}.
\end{equation}
\end{itemize}
\item[(d)] $\theta_I = h/2$.
\item[(e)] The IFS $\SS_I$ is cofinitely regular.
\end{itemize}
\end{theorem}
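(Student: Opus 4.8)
The plan is to organize the cycle into an analytic block \textup{(a)}$\Leftrightarrow$\textup{(b1)}$\Leftrightarrow$\textup{(b2)}$\Leftrightarrow$\textup{(b3)}$\Leftrightarrow$\textup{(b4)}, a combinatorial block \textup{(c1)}$\Leftrightarrow$\textup{(c2)}$\Leftrightarrow$\textup{(c3)}, and a bridge \textup{(a)}$\Leftrightarrow$\textup{(c3)}, and then to read \textup{(d)} and \textup{(e)} off the combinatorial block. For the analytic block, the point is that Ahlfors $h$-regularity of a measure is exactly the conjunction of Ahlfors $h$-lower and $h$-upper regularity, so Theorems~\ref{theoremlowerregular} and \ref{theoremupperregular} together yield \textup{(a)}$\Leftrightarrow$\textup{(b3)} once $\SS_I$ is known to be regular --- and $\HH^h(J_I)>0$ forces regularity by Proposition~\ref{propositionHausdorffpacking}(a). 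Then \textup{(b3)}$\Leftrightarrow$\textup{(b4)} is immediate from Proposition~\ref{propositioninvariantmeasure}, since $\mu_I$ and $m_I$ have the same topological support and $\mu_I(B)\asymp m_I(B)$ (the logarithm of the Radon--Nikodym derivative being bounded). Finally, when \textup{(a)} holds Proposition~\ref{propositionHausdorffpacking} makes $\HH^h\given_{J_I}$ and $\PP^h\given_{J_I}$ positive scalar multiples of $m_I$, hence Ahlfors $h$-regular; conversely, if (say) $\HH^h\given_{J_I}$ is Ahlfors $h$-regular then $0<\HH^h(J_I)<\infty$ (finiteness being automatic), so Proposition~\ref{propositionHausdorffpacking}(a) applies and $m_I$, being proportional to $\HH^h\given_{J_I}$, is Ahlfors $h$-regular; this gives \textup{(a)}$\Leftrightarrow$\textup{(b1)}, and \textup{(a)}$\Leftrightarrow$\textup{(b2)} is the same with part \textup{(b)} of that proposition.

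The substantial work is the combinatorial block. Introduce the counting function $N(T):=\#(I\cap[1,T])$, and call $L$ the common local condition that $\#(B(y,r)\cap I)\asymp r^h$ for all $y\in I$ and all $1\le r\le y/2$; this is condition \textup{(c2)(i)}$=$\textup{(c3)(i)}, and it is implied by \textup{(c1)}. The crucial observation is that $L$ already forces the global bound $N(T)\lesssim T^h$: cover $(0,T]$ by the geometric blocks $D_j:=\bigl((3/4)^{j+1}T,\,(3/4)^jT\bigr]$, note that whenever $D_j$ meets $I$ and lies above $[0,2]$ one has $D_j\subseteq B(y,y/2)$ for every $y\in I\cap D_j$, apply $L$ with $r=y/2$ to get $\#(I\cap D_j)\lesssim\bigl((3/4)^jT\bigr)^h$, and sum the geometric series (the finitely many blocks near $0$ contributing $O(1)$). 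The complementary information --- the lower bound $N(T)\gtrsim T^h$ and the absence of arbitrarily large multiplicative gaps in $I$ --- is, granted $L$, encoded equivalently by the tail estimate \textup{(c3)(ii)}, by the gap condition \textup{(c2)(ii)}, and by the instance $r>y/2$ of \textup{(c1)}; one moves among these by dyadic decomposition and Abel summation, using the routine device of working with blocks $(k,Mk]$ for a single sufficiently large fixed ratio $M$ so as to recover lower bounds on block sums from tail sums. In this way each of \textup{(c1)}, \textup{(c2)}, \textup{(c3)} is shown to be equivalent to the single assertion ``$L$ holds and $N(T)\asymp T^h$'', which gives \textup{(c1)}$\Leftrightarrow$\textup{(c2)}$\Leftrightarrow$\textup{(c3)}.

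It remains to bridge the blocks and to deduce \textup{(d)} and \textup{(e)}. Assume any of \textup{(c1)}--\textup{(c3)}, so that $N(T)\asymp T^h$. By Abel summation $\sum_{i\in I}i^{-2t}$ converges precisely for $t>h/2$, so $\theta_I=h/2$ by the characterization of $\theta_I$ in Proposition~\ref{theta} --- this is \textup{(d)} --- and $\sum_{i\in I}i^{-2\theta_I}=\sum_{i\in I}i^{-h}$ diverges, whence $\SS_I$ is cofinitely regular by Proposition~\ref{propositioncofinitelyregular} --- this is \textup{(e)} --- and in particular $\SS_I$ is regular. Now, with $\SS_I$ regular in hand (from Proposition~\ref{propositionHausdorffpacking} under \textup{(a)}, and from the previous sentence under \textup{(c3)}), Theorems~\ref{theoremlowerregularnew} and \ref{theoremupperregularnew} apply and show that \textup{(a)} is equivalent to the conjunction of their two ``\textup{(d)}''-conditions; a short argument --- using once more that $L$ forces $N(T)\lesssim T^h$, and bounding $\#(B(y,r)\cap I)$ for $y\notin I$ by comparison with a nearby element of $I$ --- identifies that conjunction with \textup{(c3)}. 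Hence \textup{(a)}$\Leftrightarrow$\textup{(c3)}, and combining with the two blocks completes the proof.

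The step I expect to be the main obstacle is the combinatorial block --- in particular the covering estimate $N(T)\lesssim T^h$ from the local condition $L$ alone, together with the bookkeeping required to make the dyadic and Abel-summation passages between tail sums, block counts, and multiplicative-gap conditions come out with mutually compatible constants; the rest is an assembly of results already stated in the excerpt.
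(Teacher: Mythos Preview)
Your proposal is correct and follows essentially the same strategy as the paper: an analytic block \textup{(a)}--\textup{(b4)} assembled from Theorems~\ref{theoremlowerregular}/\ref{theoremupperregular} and Propositions~\ref{propositioninvariantmeasure}/\ref{propositionHausdorffpacking}, a combinatorial block \textup{(c1)}--\textup{(c3)} handled by dyadic/geometric covering and tail-sum estimates, a bridge via Theorems~\ref{theoremlowerregularnew}/\ref{theoremupperregularnew}, and \textup{(d)},\textup{(e)} read off the combinatorics. The only cosmetic difference is that the paper routes the combinatorial equivalences through \textup{(c2)} and the block estimate $\#(I\cap[k,3mk])\asymp k^h$, whereas you route them through the equivalent condition ``$L$ and $N(T)\asymp T^h$''; you are also slightly more explicit than the paper about the short passage from ``$y\in I$'' in \textup{(c3)} to ``$y\in\N$'' in Theorem~\ref{theoremlowerregularnew}(d)(i).
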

\begin{proof}
Let us first assume that $\SS_I$ is regular. Then the equivalence of (a), (b3), and (c3) follows directly from Theorems \ref{theoremlowerregularnew} and \ref{theoremupperregularnew}. The equivalence of (b3) and (b4) follows from Proposition~\ref{propositioninvariantmeasure}. To see that (b1) and (b3) are equivalent, note that by part (a) of Proposition~\ref{propositionHausdorffpacking}, if the equivalence fails then $\HH^h(J_I) = 0$. But in this case, clearly (b1) and (a) are both false, so since (a) is equivalent to (b3) we have (b1)$\Leftrightarrow$(b3). A similar argument yields the equivalence of (b2) and (b3).

We next show that (c1)$\Leftrightarrow$(c2)$\Leftrightarrow$(c3)$\Rightarrow$(d), (e). In these proofs we do not assume regularity of $\SS_I$.

\begin{proof}[Proof of \textup{(c3)$\Rightarrow$(c2)}] Suppose that (c3) holds. Let $C$ be the implied constant of (\ref{hItail}), and let $m = \lceil C^{2/h}\rceil + 1$. Then for any $k\in\N$, we have
\[
\sum_{\substack{i\in I\\ i > mk}}i^{-2h} \leq C (mk)^{-h}
< C^{-1} k^{-h} \leq \sum_{\substack{i\in I\\ i > k}}i^{-2h}
\]
which demonstrates that $[k,mk]\cap I\neq\emptyset$.
\QEDmod\end{proof}
\begin{proof}[Proof of \textup{(c2)$\Rightarrow$(c1), (c3), (d), (e)}] Suppose that (c2) holds. We claim that
\begin{equation}
\label{Ik3mk}
\#(I\cap[k,3mk]) \asymp k^h
\end{equation}
for all $k\in\N$. Indeed, the upper bound can be achieved by covering $I\cap [k,3mk]$ by finitely many sets of the form $B(y,y/2)$, where $y\in I\cap[k,3mk]$, and applying (\ref{combinatoriallyregular}). The lower bound follows from choosing a point $y\in I\cap[2k,2mk]$ and applying (\ref{combinatoriallyregular}) to the set $B(y,y/2)$.

From (\ref{Ik3mk}), we calculate that for any $t\geq 0$ and $k\in\N$ we have
\begin{align*}
\sum_{\substack{i\in I\\ i > k}}i^{-2t} &\asymp \sum_{n\in\N}\sum_{\substack{i\in I\\ (3m)^n k < i \leq (3m)^{n + 1} k}}i^{-2t}\\
&\asymp \sum_{n\in\N}[(3m)^n k]^h[(3m)^n k]^{-2t}
\end{align*}
which diverges if $t\leq h/2$ and is otherwise asymptotic to
\[
k^{h - 2t} < \infty.
\]
Specializing to the case $t = h$ yields (c3). Applying Proposition~\ref{theta} yields (d). Finally, Proposition~\ref{propositioninvariantmeasure} yields (e).

To demonstrate (c1), fix $y\in I$ and $r\geq 1$. If $r\leq y/2$, then we have (\ref{combinatoriallyregular}) for free. Thus, suppose $r > y/2$. Let $N = \lceil\log_{3m}(y + r)\rceil$. Then
\[
B(y,r)\subset \bigcup_{n = 0}^N \left[(3m)^n,(3m)^{n + 1}\right].
\]
On the other hand, for each $n\leq N$ we have from (\ref{Ik3mk})
\[
\#\left(I\cap\left[(3m)^n,(3m)^{n + 1}\right]\right) \asymp [(3m)^n]^h
\]
and summing yields
\[
\#(B(y,r)\cap I) \lesssim [(3m)^N]^h \asymp r^h.
\]
To get the lower bound, note that
\[\#(B(y,r)\cap I)\geq \#(B(y,y/2)\cap I)\asymp (y/2)^h\asymp y^h.\]
This bound is good enough unless $r\geq y$. In this case, let $k = \lfloor r/(3m)\rfloor$, and (\ref{Ik3mk}) yields the bound.
\QEDmod\end{proof}
\begin{proof}[Proof of \textup{(c1)$\Rightarrow$(c2)}] Similar to the proof of (c3)$\Rightarrow$(c2).
\QEDmod\end{proof}
This completes the proof of the theorem in the case where $\SS_I$ is regular. Suppose on the other hand that $\SS_I$ is not regular. Then (b3) and (b4) are clearly false. Applying parts (a) and (b) of Proposition~\ref{propositionHausdorffpacking} yields that (a), (b1), and (b2) are false. Applying part (a) of Proposition~\ref{propositionregularity} yields that (d) is false. Since we have (c1)$\Leftrightarrow$(c2)$\Leftrightarrow$(c3)$\Rightarrow$(d), and the proof of this did not depend on the regularity of $\SS_I$, we have that (c1)-(c3) are false. This yields the theorem.
\end{proof}

\section{Proof of Theorem~\ref{theoremconversestrong}}\label{sectiontheoremconverse}
In this section we will prove the following theorem:
\begin{theorem}
\label{theoremconversestrong}
Fix an infinite set $I\subset\N$, and let $h$ be the Hausdorff dimension of $J_I$. Assume that the $h$-dimensional Hausdorff measure $\HH^h$ restricted to $J_I$ is Ahlfors $h$-regular. Let $\mu = \HH^h\given_{J_I}$, and let $\psi:\N\rightarrow(0,+\infty)$ be a function such that the function $q\mapsto q^2\psi(q)$ is nonincreasing. Then $\mu$-almost every (resp. $\mu$-almost no) point is $\psi$-approximable, assuming that the series
\begin{equation}
\label{weiss}
\sum_{q = 1}^\infty q^{2\alpha - 1}\psi(q)^\alpha
\end{equation}
diverges (resp. converges).
\end{theorem}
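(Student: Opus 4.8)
The plan is to handle the two directions separately. The convergence direction is a special case of Weiss's theorem \cite{Weiss}, which holds for every absolutely decaying measure and every $\psi$, so nothing new is required there. For the divergence direction I would assume $\sum_q q^{2h-1}\psi(q)^h=\infty$ and show that $\mu$-almost every $x$ is $\psi$-approximable. First, several harmless normalizations. Ahlfors regularity of $\HH^h\given_{J_I}$ forces $\HH^h(J_I)>0$, so by Proposition~\ref{propositionHausdorffpacking}(a) the system $\SS_I$ is regular and $\mu$ is a constant multiple of the $h$-conformal measure $m_I$; let $\mu_I$ be the equivalent $G$-invariant measure of Proposition~\ref{propositioninvariantmeasure}, so that $\mu$, $m_I$, $\mu_I$ have the same null sets. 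Replacing $\psi$ by $q\mapsto\min(\psi(q),q^{-2})$ — which keeps $q\mapsto q^2\psi(q)$ nonincreasing and alters only finitely many terms of the series, since $\{q:\psi(q)>q^{-2}\}$ is a finite initial segment (unless it is all of $\N$, in which case Dirichlet already makes every $x$ $\psi$-approximable) — I may assume $\psi(q)\le q^{-2}$, so that $\phi(q):=(q^2\psi(q))^{-1}\ge 1$ is nondecreasing and $g(q):=\phi(q)^{-h}=(q^2\psi(q))^h\in(0,1]$ is nonincreasing, and the hypothesis reads $\sum_q g(q)/q=\infty$. Finally, from $|x-p_n/q_n|<1/(q_n^2\omega_n)$ (which follows from (\ref{xpqbounds}) and (\ref{qnrecursion})), if $\xi(G^n x)\ge\phi(q_n)$ then $x$ is $\psi$-approximable via $p_n/q_n$; hence it suffices to prove that for $\mu_I$-almost every $x$ the event $E_n:=\{x:\xi(G^n x)\ge\phi(q_n)\}$ occurs for infinitely many $n$.

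The heart of the matter is a conditional-probability estimate. Let $\mathcal F_n$ be the $\sigma$-algebra generated by the level-$n$ cylinders $S_\omega$, $\omega\in I^n$; then $q_n$ is $\mathcal F_n$-measurable and $E_n$ is a union of level-$(n+1)$ cylinders. On $S_\omega$ ($\omega\in I^n$), $h$-conformality of $m_I$, the chain rule, the identity $\|g_i'\|_\infty=i^{-2}$, and the distortion bound $\max|g_\omega'|/\min|g_\omega'|\le 4$ give $m_I(S_{\omega i})/m_I(S_\omega)\asymp i^{-2h}$ (the lower bound is Lemma~\ref{lemmaSomega}, the upper bound being the same computation with $\min$ and $\max$ exchanged). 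Summing over $i\in I$ with $i\ge\lceil\phi(q_n)\rceil$, invoking the tail estimate $\sum_{i\in I,\ i>k}i^{-2h}\asymp k^{-h}$ — this is part (c3)(ii) of Theorem~\ref{theoremcombinatorialcharacterization}, available precisely because $\HH^h\given_{J_I}$ is Ahlfors $h$-regular — and using $\lceil\phi(q_n)\rceil\asymp\phi(q_n)$, I obtain
\[
m_I(E_n\mid\mathcal F_n)\asymp\phi(q_n)^{-h}=g(q_n),
\]
with implied constants depending only on $I$ and $h$, not on $n$ or $\psi$; the same holds for $\mu_I$, since $\dee\mu_I/\dee m_I$ is bounded above and below.

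Next I would verify that $\sum_n m_I(E_n\mid\mathcal F_n)=\infty$ almost everywhere. By part (e) of Theorem~\ref{theoremcombinatorialcharacterization}, Ahlfors regularity makes $\SS_I$ cofinitely regular, hence strongly regular by Proposition~\ref{propositionregularity}(a), hence (Proposition~\ref{srimpfle}) $\chi_{\mu_I}(G)<\infty$; as in the proof of Theorem~\ref{theoremextremal} this yields $L:=\int\eta\,\dee\mu_I<\infty$, and $L>0$ since $\eta>0$. Birkhoff's theorem applied to $\eta$, together with $\log q_n\le\sum_{j<n}\eta(G^j x)$ from (\ref{qnbounds}), shows that for $\mu_I$-almost every $x$ one has $\log q_n(x)\le 2Ln$ for all large $n$; as $g$ is nonincreasing, for such $x$
\[
\sum_n g(q_n)\ \ge\ \sum_{n\text{ large}}g(e^{2Ln})\ \gtrsim\ \int^\infty\frac{g(q)}{q}\,\dee q\ \asymp\ \sum_q\frac{g(q)}{q}=\infty,
\]
where the middle step is the substitution $q=e^{2Lt}$ together with the integral test for the nonincreasing function $t\mapsto g(e^{2Lt})$, and the last $\asymp$ is the integral test for the nonincreasing function $g(q)/q$. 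The events $E_n$ lie in $\mathcal F_{n+1}$, so the conditional (L\'evy) Borel--Cantelli lemma, applied with the filtration $(\mathcal F_{n+1})_{n\ge 0}$, gives that $\{E_n\text{ i.o.}\}$ has the same measure as $\{\sum_n m_I(E_n\mid\mathcal F_n)=\infty\}$, which is full. Since infinitely many such $n$ produce infinitely many distinct convergents $p_n/q_n$ (as $q_n\to\infty$), $\mu$-almost every $x$ is $\psi$-approximable.

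The step I expect to be the real obstacle is the conditional-probability estimate $m_I(E_n\mid\mathcal F_n)\asymp g(q_n)$: this is where the Ahlfors regularity hypothesis is used essentially, through the two-sided tail estimate (c3)(ii) of Theorem~\ref{theoremcombinatorialcharacterization}, and it is where the exponent $2h$ governing the conformal measures of cylinders forces the nonstandard series $\sum q^{2h-1}\psi(q)^h$ rather than the Khinchine series $\sum q\psi(q)$. The remaining ingredients are comparatively soft: the divergence of the conditional series is elementary monotonicity plus one use of Birkhoff's theorem (needing only $\chi_{\mu_I}(G)<\infty$), and the conditional Borel--Cantelli lemma is used as a black box.
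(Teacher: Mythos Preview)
Your argument is correct and uses the same essential ingredients as the paper's proof: the reduction to $m_I$ via Proposition~\ref{propositionHausdorffpacking}, the tail asymptotic $\sum_{i\in I,\,i>k}i^{-2h}\asymp k^{-h}$ from (c3)(ii) of Theorem~\ref{theoremcombinatorialcharacterization}, the cylinder ratio bound of Lemma~\ref{lemmaSomega}, Birkhoff's theorem via finite Lyapunov exponent, and Cauchy condensation to match the series (\ref{weiss}). The packaging of the Borel--Cantelli step, however, is genuinely different. The paper first invokes Birkhoff and the negation of Lemma~\ref{lemmaba} to replace the random threshold $\phi(q_n)$ by a deterministic one $K\phi(\gamma^n)$ with $\gamma=1+\lceil e^E\rceil$; then it shows directly, by iterating the one-sided bound (\ref{220120504}), that the nested intersection $\bigcap_n\{\omega_n\le K\phi(\gamma^n)\}$ has $m_I$-measure at most $\prod_n(1-c\phi(\gamma^n)^{-h})=0$. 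You instead keep the $\mathcal F_n$-measurable threshold $\phi(q_n)$, obtain the two-sided conditional estimate $m_I(E_n\mid\mathcal F_n)\asymp g(q_n)$, and appeal to the L\'evy conditional Borel--Cantelli lemma as a black box, using Birkhoff only afterward to verify $\sum_n g(q_n)=\infty$ almost surely. The paper's route is slightly more elementary (no martingale machinery, only the lower bound in (\ref{SomegaiSomega}) is needed) and in fact yields the marginally stronger conclusion of $\psi$-\emph{well} approximability; your route is cleaner conceptually and makes the role of the filtration explicit. Note that for your application only the $\gtrsim$ half of your conditional estimate is actually used, so the upper bound on $m_I(S_{\omega i})/m_I(S_\omega)$ is not needed.
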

As noted in the introduction, the convergence case follows from Weiss's theorem \cite{Weiss}.

Fix a function $\psi:\N\rightarrow(0,\infty)$, and suppose that the series (\ref{weiss}) diverges and that the function $q\mapsto q^2\psi(q)$ is nonincreasing. By (a) of Proposition~\ref{propositionHausdorffpacking}, we have $m_I\asymp \HH^h\given_{J_I}$, so to prove the theorem it suffices to show that $m_I$-almost every number is $\psi$-approximable. In fact, we will demonstrate the (slightly) stronger statement that $m_I$-almost every number is $\psi$-well approximable.

By (b1)$\Rightarrow$(e) of Theorem~\ref{theoremcombinatorialcharacterization}, the iterated function system $\SS_I = \{g_a\}_{a\in I}$ is cofinitely regular. Thus the Lyapunov exponent of $\mu_I$ is finite (Proposition~\ref{srimpfle} and part (a) of Proposition~\ref{propositionregularity}) and in particular $0 < \int\eta\dee\mu_I < \infty$ (see (\ref{intetabounds})), where $\eta$ is defined as in Definition~\ref{definitionxieta}. Thus by the Birkhoff ergodic theorem, we have
\[
\frac{1}{n}\sum_{j = 0}^{n - 1}\eta(G^j(x)) \tendsto n E := \int\eta\dee\mu_I
\]
for $\mu_I$-almost every $x\in\OI$. Combining the above equation with (\ref{qnbounds}) gives
\begin{equation}
\label{birkhoffnew}
\frac{E}{2} \leq \liminf_{n\rightarrow\infty}\frac{1}{n}\log(q_n) \leq \limsup_{n\rightarrow\infty}\frac{1}{n}\log(q_n) \leq E.
\end{equation}

Let $x\in\OI$ be a point such that (\ref{birkhoffnew}) holds but which is not $\psi$-well approximable. By Lemma~\ref{lemmaba} there exists $K > 0$ such that for all $n\in\N$, (\ref{badlyapproximable2}) fails to hold. Combining (\ref{birkhoffnew}), the negation of (\ref{badlyapproximable2}), and the fact that $\phi(q) = \frac{1}{q^2\psi(q)}$ is nondecreasing yields
\begin{equation}
\label{badlyapproximable3}
\omega_n = \xi(G^n(x)) \leq K \phi(\gamma^n)
\end{equation}
for all $n$ sufficiently large, where $\gamma = 1 + \lceil e^E\rceil$. By increasing $K$, we may ensure that (\ref{badlyapproximable3}) holds for all $n\in\N$.

Thus, we are done if we show that the set of $x$ for which there exists $K$ such that (\ref{badlyapproximable3}) holds for all $n\in\N$ is a null set. Given $n\in\N$ and $K > 0$ let
\[
S_{\psi,n,K} = \{x\in J_I: \text{(\ref{badlyapproximable3}) holds for
  $n,K$}\}
\]
and 
\[
S_{\psi,n,K}^+ = \bigcap_{j=0}^{n - 1} S_{\psi,j,K}.
\]
To complete the proof of Theorem~\ref{theoremconversestrong} we must therefore show that
\begin{equation}
\label{SpsiinftyK}
m_I\left(S_{\psi,\infty,K}^+\right) = 0 \all K > 0.
\end{equation}
\noindent Fix $K > 0$.

For each $n\in\N$, let $k_n = K\phi(\gamma^n)$. In the notation of Lemma~\ref{lemmaSomega}, we have
\begin{align*}
S_{\psi,n,K} &= \bigcup_{\omega\in A_n}S_\omega\\
S_{\psi,n + 1,K} &= \bigcup_{\omega\in A_n}S_{\omega,k_n}^+,
\end{align*}
where
\[
A_n = \prod_{j = 0}^{n - 1}\{1,\ldots,k_j\}.
\]
It therefore follows from (\ref{220120504}) that
\[
\frac{m_I(S_{\psi,n + 1}^+)}{m_I(S_{\psi,n}^+)}
\leq 1 - \frac{1}{4^{h_I}}\sum_{\substack{i\in I \\ i > k_n}}i^{-2 h_I}.
\]
On the other hand, by the implication (b1)$\Rightarrow$(c3) of Theorem~\ref{theoremcombinatorialcharacterization} we have 
\begin{equation}
\label{sigmadef}
\sum_{\substack{i \in I \\ i > k_n}} i^{-2h} \asymp k_n^{-h} \asymp \phi(\gamma^n)^{-h}.
\end{equation}
Thus for some constant $K_2 > 0$ depending on $K$, we have
\[
\frac{m_I(S_{\psi,n + 1,K}^+)}{m_I(S_{\psi,n,K}^+)}
\leq 1 - K_2\phi(\gamma^n)^{-h}.
\]
Thus
\[
m_I(S_{\psi,\infty,K}^+) \leq \prod_{n = 0}^\infty \left(1-K_2\phi(\gamma^n)^{-h}\right)
\]
which is zero if the series
\begin{equation}
\label{sigmaseries}
\sum_{n = 0}^\infty \phi(\gamma^n)^{-h}
\end{equation}
diverges. Now, by Cauchy's condensation test, (\ref{sigmaseries}) diverges if
and only if (\ref{weiss}) diverges. This demonstrates (\ref{SpsiinftyK}), completing the proof.

\section{Proof of Theorem~\ref{theoremexistence}}\label{sectiontheoremexistence}
In this section we will prove the following theorem:
\begin{theorem}
\label{theoremexistence}
For every $0 < \delta \leq 1$ there exists an infinite set $I\subset\N$ such that $\HD(J_I) = \delta$ and such that $\HH^\delta\given_{J_I}$ is Ahlfors $\delta$-regular.
\end{theorem}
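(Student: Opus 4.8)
The plan is to construct $I$ so that its counting function satisfies the "dual Ahlfors regularity" condition (c1) of Theorem~\ref{theoremcombinatorialcharacterization}, namely $\#(B(y,r)\cap I)\asymp r^h$ with $h=\delta$, while simultaneously arranging that $\HD(J_I)=\delta$. By the implications (c1)$\Rightarrow$(b1) (when $\SS_I$ is regular) together with the regularity that follows once we know $h_I=\delta$ and (c1) holds, this gives $\HH^\delta\given_{J_I}$ Ahlfors $\delta$-regular. So the whole problem reduces to: build an infinite set $I\subset\N$ with $\#(I\cap[1,T])\asymp T^\delta$ for all $T\geq 1$, such that the pressure zero $h_I$ equals $\delta$.

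**First I would** handle the geometric skeleton. For a target density $\delta$, the natural candidate is something like $I=\{\,\lfloor n^{1/\delta}\rfloor : n\in\N\,\}$ (for $\delta<1$; for $\delta=1$ take $I=\N$). This set clearly satisfies $\#(I\cap[1,T])\asymp T^\delta$, and more to the point one checks the local version (c1)(i) $\#(B(y,r)\cap I)\asymp r^\delta$ for $1\leq r\leq y/2$: the gaps between consecutive elements near $y$ are of size $\asymp y^{1-1/\delta}\cdot$(something), and counting lattice points of the form $n^{1/\delta}$ in an interval of length $2r$ around $y$ gives $\asymp r y^{\delta-1}\cdot y^{1-\delta}\cdot$... — this needs a short but careful computation, and one also verifies the tail condition $\sum_{i\in I,\,i>k}i^{-2\delta}\asymp k^{-\delta}$ directly. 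With (c1) in hand, Theorem~\ref{theoremcombinatorialcharacterization}(d) forces $\theta_I=h_I/2$; so if I can independently show $h_I=\delta$, everything closes.

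**The main obstacle** is pinning down $h_I=\delta$ exactly, rather than just $\theta_I\leq\delta/2$ or $h_I\geq\delta$. One direction is soft: since $\#(I\cap[1,T])\asymp T^\delta$ we get $\sum_{i\in I}i^{-2t}<\infty$ iff $t>\delta/2$, so $\theta_I=\delta/2$; and a covering argument (the cylinders $g_\omega(\OI)$ over $\omega\in I^n$ have diameters controlled by $\prod i^{-2}$) combined with the counting bound gives $\HD(J_I)\leq\delta$. For the lower bound $h_I\geq\delta$ I would build an explicit subsystem or an explicit mass distribution: take a parameter $t<\delta$ close to $\delta$ and show $P_I(t)>0$, hence by Bowen's formula (Theorem~\ref{BowenFormula}) $h_I\geq t$; letting $t\uparrow\delta$ finishes. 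To show $P_I(t)>0$ for $t<\delta$, I would use $\theta_I=\delta/2<t$, which means $\sum_{i\in I}i^{-2t}=+\infty$, so $P_I(t)=+\infty>0$ — in fact this shows $\SS_I$ is cofinitely regular directly via Proposition~\ref{propositioncofinitelyregular} once $P_I(\theta_I)=+\infty$, which again follows from the divergence of $\sum_{i\in I}i^{-2\theta_I}=\sum_{i\in I}i^{-\delta}\asymp\sum_n n^{-1}$. Thus $h_I>\theta_I=\delta/2$ and $h_I\leq\delta$; the remaining task is the sharper lower bound $h_I\geq\delta$, for which the clean route is to note that (c1)(i)–(ii) have already been verified, so we may invoke Theorem~\ref{theoremlowerregular}: condition (b) there (the supremum $\sup_{k_1<k_2}\frac{(k_1k_2)^h}{(k_2-k_1)^h}\sum_{k_1\leq i\leq k_2}i^{-2h}<\infty$) holds at $h=\delta$ by the same gap estimate, giving $\HH^\delta(J_I)>0$, hence $h_I\geq\delta$, hence $h_I=\delta$.

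**Finally**, with $h_I=\delta$ established and (c1) verified at the correct exponent $h=h_I$, Theorem~\ref{theoremcombinatorialcharacterization} yields (b1): $\HH^\delta\given_{J_I}$ is Ahlfors $\delta$-regular. I expect the write-up to consist of: (1) defining $I$; (2) the elementary counting estimates giving (c1)(i)–(ii) and the tail asymptotic; (3) deducing $\theta_I=\delta/2$ and cofinite regularity; (4) the two-sided dimension bound $h_I=\delta$ via Bowen's formula for the easy upper bound and Theorem~\ref{theoremlowerregular} for $\HH^\delta(J_I)>0$; (5) quoting Theorem~\ref{theoremcombinatorialcharacterization}. Step (2) — getting the local counting $\#(B(y,r)\cap I)\asymp r^\delta$ uniformly, with the right implied constants as $r$ ranges over $[1,y/2]$ and $y$ ranges over $I$ — is where the only real work lies, though it is entirely elementary; the subtlety is making sure the floor function $\lfloor n^{1/\delta}\rfloor$ does not destroy the lower counting bound (consecutive values could in principle collide when $1/\delta$ is small, i.e. $\delta$ near $1$, so one may prefer $I=\{\lceil c\,2^{n/\delta}\rceil\}$-type lacunary-within-polynomial constructions, or simply restrict to $\delta<1$ and handle $\delta=1$ by $I=\N$ separately).
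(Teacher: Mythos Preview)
Your construction $I=\{\lfloor n^{1/\delta}\rfloor\}$ does \emph{not} satisfy the local counting condition (c1) of Theorem~\ref{theoremcombinatorialcharacterization}. The global count $\#(I\cap[1,T])\asymp T^\delta$ is correct, but (c1) demands $\#(B(y,r)\cap I)\asymp r^\delta$ uniformly over $y\in I$ and $1\le r\le y/2$. Near $y\approx n^{1/\delta}$ the gap between consecutive elements of $I$ is $\asymp y^{1-\delta}$, so $\#(B(y,r)\cap I)\asymp \max\bigl(1,\,r\,y^{\delta-1}\bigr)$, which is not $\asymp r^\delta$ at intermediate scales. Concretely, for $\delta=1/2$ your $I$ is the set of perfect squares; with $y=N^2$ and $r=N^{1/2}$ one has $\#(B(y,r)\cap I)=1$ while $r^\delta=N^{1/4}\to\infty$. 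Condition (c1) is genuinely an Ahlfors-regularity condition on $I$ itself, and a polynomial sequence is too uniformly spaced to have it. (Your lacunary alternative $\{\lceil c\,2^{n/\delta}\rceil\}$ fails for the opposite reason: its counting function is logarithmic.) The paper's $I_0=\bigl\{1+\sum_n a_n\lfloor 2^{n/\delta}\rfloor:a_n\in\{0,1\}\bigr\}$ is a digit-expansion set with the required self-similarity at every scale.

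There is a second, independent gap: even once a set satisfying (c1) at exponent $\delta$ is in hand, nothing forces $\HD(J_I)=\delta$. The paper remarks explicitly (just after Theorem~\ref{theoremcombinatorialcharacterization}) that (c1) can hold for some $h\neq h_I$. Your proposed closure is circular: Theorem~\ref{theoremlowerregular} is stated with $h=\HD(J_I)$ as a standing hypothesis, not a free parameter, so you cannot invoke it at $h=\delta$ to conclude $\HH^\delta(J_I)>0$ before knowing $h_I=\delta$; and your covering argument for $h_I\le\delta$ only yields $\sum_{i\in I}i^{-2\delta}<\infty$, hence $P_I(\delta)<\infty$, not $P_I(\delta)\le 0$. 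The paper resolves this by a separate recursive construction, using Lemma~\ref{lemmaKZ} to adjust the set one element at a time so that $\lambda_\delta(I)=e^{P_I(\delta)}$ lands exactly on $1$, and then carefully merges that set with $I_0$ so as to preserve (c1).
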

Fix $0 < \delta \leq 1$. If $\delta = 1$, we let $I = \N$; the conclusion of the proposition is satisfied since then $\HH^\delta\given_{J_I}$ is simply Lebesgue measure. Thus, we shall assume without loss of generality that $\delta < 1$. We observe that by the implication (c1)$\Rightarrow$(b1) of Theorem~\ref{theoremcombinatorialcharacterization}, to prove Theorem~\ref{theoremexistence} it suffices to find a set $I$ satisfying
\begin{equation}
\label{deltaequalsh}
\HD(J_I) = \delta
\end{equation}
and
\begin{equation}
\label{combinatoriallydeltaregular}
\#(B(y,r)\cap I)\asymp r^\delta.
\end{equation}
We will begin by finding a set $I_0$ which satisfies (\ref{combinatoriallydeltaregular}) but not necessarily (\ref{deltaequalsh}). Then we will construct a set $R$ which satisfies (\ref{deltaequalsh}) but not necessarily (\ref{combinatoriallydeltaregular}). Finally we will combine $I_0$ and $R$ into a single set $I_\delta$ which is satisfies both (\ref{deltaequalsh}) and (\ref{combinatoriallydeltaregular}).

\subsection{Constructing $I_0$}
\begin{lemma}
\label{lemmaI0}
There exists a set $I_0\subset\N$ satisfying \textup{(\ref{combinatoriallydeltaregular})}.
\end{lemma}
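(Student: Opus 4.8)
The plan is to construct $I_0$ explicitly as a digit-restricted set in a suitable integer base, so that it is self-similar across all scales $\geq 1$, which is precisely what (\ref{combinatoriallydeltaregular}) demands. Since the case $\delta = 1$ has already been disposed of (there $I = \N$), I assume $0 < \delta < 1$.

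First I would fix an integer base $b$ large enough (depending on $\delta$) that $2 \leq \lfloor b^\delta\rfloor$ and $\lfloor b^\delta\rfloor + 1 \leq b$; both hold once $b$ is large, since $\delta < 1$. Put $M = \lfloor b^\delta\rfloor$. Because $b^\delta$ need not be an integer, I would then choose a sequence $(N_j)_{j\geq 1}$ with $N_j\in\{M,M+1\}$ \emph{greedily}, so that the running error $\sum_{j\leq k}\log N_j - \delta k\log b$ is decremented (choose $N_j = M$) or incremented (choose $N_j = M+1$) according to its sign; this keeps the error bounded by $\log(1+1/M)$ for all $k$, whence
\[
\prod_{j=1}^{k} N_j \asymp b^{\delta k}\qquad(k\geq 0),
\]
with implied constants depending only on $\delta$. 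Now pick sets $D_i\subseteq\{0,1,\ldots,b-1\}$ with $0\in D_i$ and $\#D_i = N_{i+1}$ (for instance $D_i = \{0,1,\ldots,N_{i+1}-1\}$), and set
\[
I_0 = \{n\in\N : d_i(n)\in D_i\text{ for all } i\geq 0\},
\]
where $d_i(n)$ is the $i$-th base-$b$ digit of $n$. Since each $N_j\geq 2$, the set $I_0$ is infinite.

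The proof of (\ref{combinatoriallydeltaregular}) then rests on two elementary counting facts. (1) For every $k\geq 0$ and every $m$, the block $[mb^k,(m+1)b^k)$ contains either no element of $I_0$ or exactly $\prod_{i=0}^{k-1}N_{i+1}$ of them, according to whether the digits of $m$ are admissible; in all cases the count is $\leq\prod_{i=0}^{k-1}N_{i+1}\asymp b^{\delta k}$. (2) If $y\in I_0$, then altering only the digits $d_0(y),\ldots,d_{k-1}(y)$ to arbitrary values of $D_0,\ldots,D_{k-1}$ produces $\prod_{i=0}^{k-1}N_{i+1}$ distinct elements of $I_0$, each within $b^k-1 < b^k$ of $y$. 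Given $y\in I_0$ and $r\geq 1$, the upper bound follows by taking $k$ minimal with $b^k > r$: then $B(y,r)$ is an interval of length $<2b^k$, hence meets at most three consecutive level-$k$ blocks, so by (1), $\#\bigl(B(y,r)\cap I_0\bigr)\leq 3\prod_{i=0}^{k-1}N_{i+1}\lesssim b^{\delta k}\lesssim r^\delta$ (using $b^{k-1}\leq r$). The lower bound follows by taking $k$ maximal with $b^k\leq r$: by (2), $\#\bigl(B(y,r)\cap I_0\bigr)\geq\prod_{i=0}^{k-1}N_{i+1}\gtrsim b^{\delta k}\gtrsim r^\delta$ (using $r < b^{k+1}$). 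This establishes (\ref{combinatoriallydeltaregular}).

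I expect the one point needing genuine care to be the exponent-matching sequence $(N_j)$: a single fixed branching number cannot reproduce $b^{\delta k}$ unless $b^\delta$ is an integer, so one must interpolate between $M$ and $M+1$ while keeping the partial products within a bounded multiplicative factor of $b^{\delta k}$ uniformly in $k$ --- this, together with choosing $b$ large enough that $2\leq N_j\leq b$, is where the hypothesis $\delta < 1$ is used. The remaining ingredients --- the two counting facts and the covering of $B(y,r)$ by $O(1)$ blocks --- are routine base-$b$ bookkeeping; notably, neither bound requires the digit sets $D_i$ to be spread out, only to have the prescribed sizes and to contain $0$.
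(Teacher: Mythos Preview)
Your proof is correct, and it takes a genuinely different route from the paper's. The paper defines $I_0$ in one line as the set of all finite sums
\[
1 + \sum_{n\in\N} a_n\,\lfloor 2^{n/\delta}\rfloor,\qquad a_n\in\{0,1\},
\]
i.e.\ a binary-digit construction with \emph{irregular} place values growing like $2^{n/\delta}$, and then simply asserts that (\ref{combinatoriallydeltaregular}) is ``readily verified.'' Your construction instead fixes a large integer base $b$ and restricts digits to sets $D_i$ of carefully calibrated sizes $N_{i+1}\in\{M,M+1\}$, so that the place values are \emph{regular} but the branching varies. The paper's set is quicker to write down; yours is quicker to verify, because the block structure $[mb^k,(m+1)b^k)$ aligns with the base and the two counting facts are immediate. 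In the paper's construction one must first check that $\sum_{n<N}\lfloor 2^{n/\delta}\rfloor < \lfloor 2^{N/\delta}\rfloor$ (which does hold for $\delta<1$) to ensure the representations are unique, and then run an argument analogous to yours with the $c_n=\lfloor 2^{n/\delta}\rfloor$ playing the role of block boundaries. Your exponent-matching device --- the greedy sequence $(N_j)$ keeping $\prod_{j\le k}N_j\asymp b^{\delta k}$ --- is the natural counterpart to the paper's choice of place values $\asymp 2^{n/\delta}$; each handles the fact that $\delta$ is typically irrational in its own way.
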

\begin{proof}
Indeed, let $I_0$ be the set of all sums of the form
\[
1 + \sum_{n\in\N}a_n\lfloor 2^{n/\delta}\rfloor,
\]
where $a_n = 0$ or $1$ for all $n\in\N$, with only finitely many $1$s. It is readily verified that $I_0$ satisfies (\ref{combinatoriallydeltaregular}).
\end{proof}

\subsection{Constructing $R$}
We define a sequence of subsets $R_N\subset\N$ by induction:
\begin{itemize}
\item[1.] Let $R_1 = \{1\}$.
\item[2.] Suppose that $R_{N - 1}\subset \{1,\ldots,N - 1\}$ has been
  defined for some $N\geq 2$. If 
\[
\lambda_\delta(R_{N - 1}\cup\{N\}) < 1,
\]
then let $R_N = R_{N - 1} \cup \{N\}$, otherwise let $R_N = R_{N - 1}$.
\end{itemize}
\begin{observation}
\label{observationlambdaRN}
For all $N\in\N$
\[
\lambda_\delta(R_N) < 1.
\]
\end{observation}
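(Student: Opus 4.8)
The plan is to prove the observation by induction on $N$, with essentially all of the work concentrated in the base case; the inductive step is forced by the way $R_N$ is defined.

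For the base case $N = 1$ I must show $\lambda_\delta(\{1\}) < 1$, equivalently $P_{\{1\}}(\delta) < 0$. The cleanest route is a direct computation: writing the $n$-fold composite $g_{1^n} = g_1\circ\cdots\circ g_1$ as a M\"obius transformation, one finds $g_{1^n}(x) = (F_{n-1} + F_n x)/(F_n + F_{n+1}x)$, where $(F_n)$ is the Fibonacci sequence, so that (the associated matrix having determinant $\pm1$) $|g_{1^n}'(x)| = (F_n + F_{n+1}x)^{-2}$ and hence $\|g_{1^n}'\|_\infty = F_n^{-2}$, the supremum being attained at $x = 0$. Since the sum defining the pressure over $\{1\}^n$ has a single term,
\[
P_{\{1\}}(\delta) = \lim_{n\to\infty}\frac1n\log\|g_{1^n}'\|_\infty^\delta = -2\delta\lim_{n\to\infty}\frac{\log F_n}{n} = -2\delta\log\!\left(\frac{1+\sqrt5}{2}\right) < 0,
\]
so $\lambda_\delta(\{1\}) < 1$. (Alternatively, since $\{1\}$ is finite, $\SS_{\{1\}}$ is regular by part~(a) of Proposition~\ref{scfsr}, and $J_{\{1\}}$ is the single fixed point of $g_1$, so $\HD(J_{\{1\}}) = 0$; by Bowen's formula (Theorem~\ref{BowenFormula}) the zero of $P_{\{1\}}$ is $0$, and strict monotonicity of the pressure then gives $P_{\{1\}}(\delta) < P_{\{1\}}(0) = 0$ for $\delta > 0$.)

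For the inductive step, assume $\lambda_\delta(R_{N-1}) < 1$. By construction there are two cases. If $\lambda_\delta(R_{N-1}\cup\{N\}) < 1$, then $R_N = R_{N-1}\cup\{N\}$ and $\lambda_\delta(R_N) < 1$ is precisely the condition that was checked. Otherwise $R_N = R_{N-1}$, and $\lambda_\delta(R_N) = \lambda_\delta(R_{N-1}) < 1$ by the inductive hypothesis. In either case $\lambda_\delta(R_N) < 1$, which closes the induction.

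The only step requiring any computation is the base-case estimate $\lambda_\delta(\{1\}) < 1$, and even that is routine; the recursive rule defining $R_N$ is designed so that $\lambda_\delta(R_N)$ can never reach $1$, so there is no genuine obstacle to overcome here.
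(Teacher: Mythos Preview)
Your proof is correct and follows exactly the approach the paper sketches: induction on $N$, with the base case handled either by direct computation or via Bowen's formula (the paper's one-line proof mentions both options), and the inductive step immediate from the construction of $R_N$. One cosmetic slip: the M\"obius formula should read $g_{1^n}(x) = (F_{n-1}x + F_n)/(F_n x + F_{n+1})$, giving $\|g_{1^n}'\|_\infty = F_{n+1}^{-2}$ rather than $F_n^{-2}$, but this does not affect the limit $-2\delta\log\frac{1+\sqrt5}{2}$.
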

\begin{proof}
The base case follows either from direct computation or from Bowen's formula (Theorem~\ref{BowenFormula}); the inductive step follows from the construction of $R_N$.
\end{proof}

\begin{claim}
$R:= \bigcup_N R_N$ is not cofinite.
\end{claim}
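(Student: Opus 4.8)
The plan is to argue by contradiction: suppose $R = \bigcup_N R_N$ is cofinite, say $R \supseteq \{N_0, N_0 + 1, \ldots\}$ for some $N_0$. The key point is that for all $N \geq N_0$ we have $N \in R$, which by the construction in step 2 means that at stage $N$ we had $\lambda_\delta(R_{N-1} \cup \{N\}) < 1$, i.e. $N$ was actually added. In particular $R_N = R_{N-1} \cup \{N\}$ for all $N \geq N_0$, so $R_N \supseteq \{N_0, \ldots, N\}$ and hence, by Observation~\ref{observationlambdaRN},
\[
\lambda_\delta(\{N_0, \ldots, N\}) \leq \lambda_\delta(R_N) < 1
\]
for every $N \geq N_0$. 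Letting $N \to \infty$ and applying Theorem~\ref{continuityofpressure} (which gives $P_R(\delta) = \lim_N P_{R \cap \{1,\ldots,N\}}(\delta)$, equivalently $\lambda_\delta(R) = \lim_N \lambda_\delta(R \cap \{1,\ldots,N\})$), we would get $\lambda_\delta(\{N_0, N_0+1, \ldots\}) \leq 1$, hence $\lambda_\delta(R) \leq 1$ as well since $R$ is a superset.

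The contradiction then comes from the fact that $\delta \leq 1/2$ (recall we are in the regime $0 < \delta \le 1/2$ from the construction of Theorem~\ref{theoremcounterexample}; in the present section we may likewise assume $\delta$ small, or more carefully invoke Proposition~\ref{theta} with $\theta_\N = 1/2$), so that the cofinite set $\{N_0, N_0+1,\ldots\}$ has $\sum_{i \geq N_0} i^{-2\delta} = \infty$. By Proposition~\ref{propositioncofinitelyregular} this forces the system generated by a cofinite set to be cofinitely regular, and in particular $P_{\{N_0, N_0+1,\ldots\}}(\delta) = +\infty$ when $\delta \le \theta = 1/2$, so $\lambda_\delta$ of this set is $+\infty$, not $\le 1$. (More directly: by Lemma~\ref{lemmaKZ}, adding the elements $N_0, N_0+1, \ldots$ one at a time increases $\lambda_\delta$ by at least $(i+1)^{-2\delta}$ each time, and $\sum_i (i+1)^{-2\delta}$ diverges, so $\lambda_\delta$ of any cofinite set is infinite — this is exactly the argument already used for (\ref{lambdadeltamN}).) This contradicts $\lambda_\delta(R) \le 1$ obtained above.

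The only genuinely delicate point is the passage to the limit: one must be careful that $\lambda_\delta(R_{N-1} \cup \{N\}) < 1$ being the condition for inclusion means precisely that whenever $N \notin R$, the "budget" $\lambda_\delta$ was already saturated, and conversely a cofinite tail in $R$ would let us lower-bound $\lambda_\delta(R)$ by $\lambda_\delta$ of an arbitrarily long initial-segment-of-a-cofinite-tail, whose limit diverges. Everything else is a direct application of Observation~\ref{observationlambdaRN}, Lemma~\ref{lemmaKZ}, and Theorem~\ref{continuityofpressure}, all of which are available. I expect no real obstacle beyond bookkeeping the indices correctly and confirming that the regime of $\delta$ under consideration in this section indeed has $\delta \le 1/2$ so that the relevant series diverges.
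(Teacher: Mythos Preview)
Your argument has a genuine gap: you assume $\delta \le 1/2$, but this claim lives in Section~\ref{sectiontheoremexistence} (the proof of Theorem~\ref{theoremexistence}), where the standing hypothesis is $0 < \delta < 1$, not in Section~\ref{sectionconformalextremal}. The restriction $\delta \le 1/2$ belongs to the construction of Theorem~\ref{theoremcounterexample} and is simply unavailable here. When $1/2 < \delta < 1$ the series $\sum_i i^{-2\delta}$ converges, so your divergence argument (``$\lambda_\delta$ of any cofinite set is infinite'') fails outright, and neither Proposition~\ref{theta} nor Proposition~\ref{propositioncofinitelyregular} rescues it: $\theta_\N = 1/2$ tells you nothing about where $\delta$ sits relative to $1/2$.

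For $\delta \le 1/2$ your approach is essentially the paper's. The paper, however, treats the case $1/2 < \delta < 1$ by a quantitative refinement: one takes the \emph{maximal} element $M$ of $\N\setminus R$ (which exists if $R$ is cofinite and $R\ne\N$; the latter follows from $\lambda_\delta(R)\le 1$ and Bowen's formula), so that $\{M+1,M+2,\ldots\}\subset R$. From $M\notin R$ one gets $\lambda_\delta(R_{M-1})\ge 1 - \big(\tfrac{2}{2+M}\big)^{2\delta}$ via Lemma~\ref{lemmaKZ}, while Observation~\ref{observationlambdaRN} and Lemma~\ref{lemmaKZ} give
\[
\Big(\tfrac{2}{2+M}\Big)^{2\delta} > \sum_{i>M}\Big(\tfrac{1}{1+i}\Big)^{2\delta} > \int_{M+1}^\infty (1+x)^{-2\delta}\,dx = \frac{(M+2)^{1-2\delta}}{2\delta-1}.
\]
Rearranging yields $M+2 < 2^{2\delta}(2\delta-1) \le 4$, contradicting $M\ge 2$ (note $1\in R$). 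This extra inequality is precisely the missing idea in your proposal.

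A minor aside: your sentence ``hence $\lambda_\delta(R)\le 1$ as well since $R$ is a superset'' has the monotonicity backwards; supersets have larger pressure, not smaller. It is harmless because you already have $\lambda_\delta(\{N_0,N_0+1,\ldots\})\le 1$, which is all your contradiction uses, but it should be removed.
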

\begin{proof}
By Theorem~\ref{continuityofpressure} and by the previous observation, we have $\lambda_\delta(R)\leq 1$. Combining with Bowen's formula, we see that $\HD(J_R) \leq \delta < 1 = \HD(J_\N)$. In particular, $R\neq\N$.

Thus if we suppose by contradiction that $R$ is cofinite, then $\N\butnot R$ has a maximal element $M$; moreover, we know that $M\geq 2$ since $1\in R$. But then by the construction of $R_M$, we have 
\[
\lambda_\delta(R_{M - 1}\cup\{M\}) \geq 1
\]
and so by Lemma~\ref{lemmaKZ} we have
\begin{equation}
\label{toomuchR}
\lambda_\delta(R_{M - 1}) \geq 1 - \left(\frac{2}{2 + M}\right)^{2\delta}\cdot
\end{equation}
On the other hand, by Observation~\ref{observationlambdaRN} we have
\[
\lambda_\delta(R_{M - 1}\cup\{M + 1,\ldots,N\}) = \lambda_\delta(R_N) < 1
\]
for every $N\in\N$. So, applying Lemma~\ref{lemmaKZ}, we see  that
\begin{align*}
\left(\frac{2}{2 + M}\right)^{2\delta}
&> \lambda_\delta(R_{M - 1}\cup\{M + 1,\ldots,N\}) - \lambda_\delta(R_{M - 1})\\
&\geq \sum_{i = M + 1}^N \left(\frac{1}{1 + i}\right)^{2\delta}\\
&> \int_{x = M + 1}^{N + 1}\left(\frac{1}{1 + x}\right)^{2\delta}\dee x.
\end{align*}
Since $N$ was arbitrary, we can take the limit as $N$ approaches
infinity and so we have 
\[
\int_{x = M + 1}^\infty \left(\frac{1}{1 + x}\right)^{2\delta}\dee x <
\left(\frac{2}{2 + M}\right)^{2\delta}\cdot 
\]
If $\delta\leq 1/2$, then the left hand integral diverges, a contradiction. If $\delta > 1/2$, the left hand integral converges and we have 
\[
\frac{(M + 2)^{1 - 2\delta}}{2\delta - 1} < \left(\frac{2}{2 + M}\right)^{2\delta}\cdot
\]
Rearranging yields
\[
M + 2 < 2^{2\delta}(2\delta - 1) \leq 2^2 (2 - 1) = 4.
\]
This contradicts $M\geq 2$ and the proof is finished.
\end{proof}
\begin{observation}
It follows from (\ref{toomuchR}), Observation~\ref{observationlambdaRN}, and Theorem~\ref{continuityofpressure} that
\[
\HD(J_R) = \delta.
\]
\end{observation}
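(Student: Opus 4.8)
The plan is to deduce the equality from Bowen's formula (Theorem~\ref{BowenFormula}) by showing that $\lambda_\delta(R) = 1$, i.e. $P_R(\delta) = 0$; once a zero of $P_R$ is known to exist, Bowen's formula identifies it as $\HD(J_R)$, so $\HD(J_R) = \delta$. The upper bound $\lambda_\delta(R)\le 1$ is essentially already contained in the proof of the preceding Claim, so the real substance is the lower bound $\lambda_\delta(R)\ge 1$.

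For the upper bound I would first record that $R\cap\{1,\ldots,N\} = R_N$ for every $N$: an integer $i$ can be added to $R$ (if at all) only at stage $i$ of the construction, since $R_{i-1}\subseteq\{1,\ldots,i-1\}$, so $i\in R$ iff $i\in R_i$, and the sets $R_N$ increase. Then Theorem~\ref{continuityofpressure} gives $\lambda_\delta(R) = \lim_{N\to\infty}\lambda_\delta(R_N)$, and since each $\lambda_\delta(R_N) < 1$ by Observation~\ref{observationlambdaRN}, the limit is $\le 1$.

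For the lower bound I would use inequality (\ref{toomuchR}) together with the non-cofiniteness of $R$ established in the Claim. The key observation is that although (\ref{toomuchR}) was written down inside the proof of the Claim under the temporary (ultimately self-contradictory) assumption that $R$ is cofinite, the estimate it records is valid for every $M\ge 2$ with $M\notin R$, with the same one-line justification: at stage $M$ we must have had $\lambda_\delta(R_{M-1}\cup\{M\}) \ge 1$, for otherwise $M$ would have been placed in $R$, so Lemma~\ref{lemmaKZ} yields
\[
\lambda_\delta(R_{M-1}) \;\ge\; \lambda_\delta(R_{M-1}\cup\{M\}) - \left(\frac{2}{M+2}\right)^{2\delta} \;\ge\; 1 - \left(\frac{2}{M+2}\right)^{2\delta}.
\]
Since $R_{M-1}\subseteq R$, monotonicity of topological pressure (immediate from its definition) gives $\lambda_\delta(R) \ge \lambda_\delta(R_{M-1}) \ge 1 - (2/(M+2))^{2\delta}$. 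By the Claim, $\N\butnot R$ is infinite, hence contains arbitrarily large $M$; letting $M\to\infty$ through $\N\butnot R$ we obtain $\lambda_\delta(R)\ge 1$. Combined with the upper bound this gives $\lambda_\delta(R) = 1$, hence $P_R(\delta) = 0$ and $\HD(J_R) = \delta$.

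The main obstacle — really the only nontrivial point — is recognizing that the inequality (\ref{toomuchR}) does not depend on the cofiniteness hypothesis under which it first appeared, only on $M$ lying in $\N\butnot R$ with $M\ge 2$, and then feeding it the already-established fact that arbitrarily large such $M$ exist. Everything else (the identification $R\cap\{1,\ldots,N\}=R_N$, continuity and monotonicity of pressure, and the appeal to Bowen's formula) is routine.
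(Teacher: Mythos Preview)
Your argument is correct and is precisely the intended one: the paper states the Observation without proof, citing exactly the three ingredients you assemble (together with Bowen's formula, which is implicit), and your reconstruction---upper bound via $\lambda_\delta(R)=\lim_N\lambda_\delta(R_N)\le 1$, lower bound by applying (\ref{toomuchR}) at arbitrarily large $M\in\N\butnot R$ supplied by the preceding Claim---is exactly what the authors have in mind. Your remark that (\ref{toomuchR}) depends only on $M\notin R$, $M\ge 2$, and not on the cofiniteness hypothesis, is the key point and is correctly justified.
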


\subsection{Combining $I_0$ and $R$}
Fix $N_1\in\N\butnot R$ large, to be determined later.\footnote{Specifically, we let $N_1$ be large enough so that (\ref{N1def}) cannot hold whenever $M\geq N_1$.} By the construction of $R_{N_1}$ we have (\ref{toomuchR}) with $M = N_1$, and so 
\[
1 - \left(\frac{2}{2 + N_1}\right)^{2\delta} \leq \lambda_\delta(R_{N_1 - 1}) < 1.
\]
Now let $I_0$ be as in Lemma~\ref{lemmaI0}, and let
\begin{align*}
I_+ &:= 2I_0\\
I_- &:= 2I_0 - 1.
\end{align*}
It is evident that any set $I_\delta\subset \N$ satisfying
\begin{equation}
\label{impliesstar}
I_- \subset_* I_\delta \subset_* I_+\cup I_-
\end{equation}
satisfies (\ref{combinatoriallydeltaregular}), where $A\subset_* B$ means $\#(A\butnot B) < \infty$. We will construct such a set
recursively. Now by the implication (c1)$\Rightarrow$(c3) of Theorem~\ref{theoremcombinatorialcharacterization}\footnote{Note that the implication holds even when $h\neq \HD(J_I)$.}, we have
\[
\sum_{i\in I_-}\left(\frac{2}{2 + i}\right)^{2\delta} \asymp \sum_{i\in I_-}i^{-2\delta} < \infty;
\]
thus we may choose $N_2$ large enough so that
\begin{equation}
\label{N1def}
\sum_{i\in I_-\butnot\{1,\ldots,N_2\}}\left(\frac{2}{2 + i}\right)^{2\delta} < 1 - \lambda_\delta(R_{N_1 - 1}). 
\end{equation}
We will now construct a sequence of sets $(I_N)_{N\geq N_1 - 1}$
recursively in the following manner: 
\begin{itemize}
\item[1.] Let
\[
I_{N_1 - 1} = R_{N_1 - 1}\cup(I_-\butnot\{1,\ldots,N_2\}).
\]
\item[2.] Suppose that $I_{N - 1}$ has been defined for some $N\geq
  N_1$. If $N\notin I_+\cup\{N_1\}$, then let $I_N = I_{N - 1}$. 
\item[3.] If $N\in I_+\cup\{N_1\}$, and if
\[
\lambda_\delta(I_{N - 1}\cup\{N\}) < 1,
\]
then let $I_N = I_{N - 1} \cup \{N\}$.
\item[4.] Otherwise, let $I_N = I_{N - 1}$.
\end{itemize}

\begin{observation}
For all $N\geq N_1-1$
\[
\lambda_\delta(I_N) < 1.
\]
\end{observation}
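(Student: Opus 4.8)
The plan is to argue by induction on $N \geq N_1 - 1$. The inductive step will be essentially a tautology, reading off directly from the three branches in the construction of $I_N$, so the only substantive point is the base case $N = N_1 - 1$, where one must bound $\lambda_\delta\big(R_{N_1-1} \cup (I_- \butnot \{1,\ldots,N_2\})\big)$.

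For the base case I would combine the two facts the construction was arranged to provide. First, Observation~\ref{observationlambdaRN} gives $\lambda_\delta(R_{N_1-1}) < 1$. Second, since $R_{N_1-1} \subseteq \{1,\ldots,N_1-1\}$ and (taking $N_2 \geq N_1$, as we may, since enlarging $N_2$ only shrinks the left side of (\ref{N1def})) the set $I_-\butnot\{1,\ldots,N_2\}$ is contained in $\{N_2+1,N_2+2,\ldots\}$, the two sets are disjoint; hence one can pass from $R_{N_1-1}$ to $I_{N_1-1} = R_{N_1-1}\cup(I_-\butnot\{1,\ldots,N_2\})$ by adjoining the elements of $I_-\butnot\{1,\ldots,N_2\}$ one at a time. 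Each such element $i$ exceeds $N_2 \geq 2$ and is not already present, so the upper bound in Lemma~\ref{lemmaKZ} applies at each step; summing these increments over the finite truncations $(I_-\butnot\{1,\ldots,N_2\})\cap\{1,\ldots,M\}$ and letting $M\to\infty$ by way of Theorem~\ref{continuityofpressure} yields
\[
\lambda_\delta(I_{N_1-1}) \leq \lambda_\delta(R_{N_1-1}) + \sum_{i\in I_-\butnot\{1,\ldots,N_2\}}\left(\frac{2}{2+i}\right)^{2\delta}.
\]
By the defining inequality (\ref{N1def}) of $N_2$, the sum on the right is strictly smaller than $1 - \lambda_\delta(R_{N_1-1})$, so the whole right-hand side is $< 1$, establishing the base case.

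For the inductive step, assume $\lambda_\delta(I_{N-1}) < 1$ for some $N \geq N_1$ and inspect the construction: if $N\notin I_+\cup\{N_1\}$, or if $N\in I_+\cup\{N_1\}$ but $\lambda_\delta(I_{N-1}\cup\{N\})\geq 1$, then $I_N = I_{N-1}$ and we conclude by the inductive hypothesis; if $N\in I_+\cup\{N_1\}$ and $\lambda_\delta(I_{N-1}\cup\{N\}) < 1$, then $I_N = I_{N-1}\cup\{N\}$ and $\lambda_\delta(I_N) < 1$ is exactly the condition that placed us in this branch. This closes the induction. The only real obstacle is the bookkeeping in the base case — in particular, justifying the telescoped application of Lemma~\ref{lemmaKZ} over the infinite set $I_-\butnot\{1,\ldots,N_2\}$, which is precisely where continuity of the pressure (Theorem~\ref{continuityofpressure}) enters to legitimize passing to the limit; everything else is immediate from the recursive definition.
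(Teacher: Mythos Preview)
Your proof is correct and follows essentially the same approach as the paper: induction on $N$, with the base case handled by Lemma~\ref{lemmaKZ} combined with the defining inequality (\ref{N1def}), and the inductive step read off from the three branches of the construction. The paper's own proof is the two-line summary ``The base case of induction follows from Lemma~\ref{lemmaKZ} together with (\ref{N1def}). The induction step follows from the construction of $I_N$''; you have simply unpacked the telescoping and the limit passage (via Theorem~\ref{continuityofpressure}, which is exactly the content of the Remark following Lemma~\ref{lemmaKZ}) that the paper leaves implicit.
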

\begin{proof}
The base case of induction follows from Lemma~\ref{lemmaKZ} together with
(\ref{N1def}). The induction step follows from the construction of
$I_N$. 
\end{proof}

\begin{claim}
Case 4 occurs infinitely many times.
\end{claim}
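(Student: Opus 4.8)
The plan is to argue by contradiction: suppose Case 4 occurs only finitely many times, say never after some stage $N_0 \geq N_1$. Then for all $N \geq N_0$ with $N \in I_+$, we are always in Case 3, i.e. we always have $\lambda_\delta(I_{N-1}\cup\{N\}) < 1$ and we adjoin $N$. Consequently, writing $I_\delta = \bigcup_N I_N$, the set $I_\delta$ contains all but finitely many elements of $I_+$; combined with $I_- \subset I_\delta$ (which holds by construction, since $I_- \butnot \{1,\ldots,N_2\} \subset I_{N_1-1}$), this gives $I_+ \cup I_- \subset_* I_\delta$, hence $I_\delta \supset_* 2I_0 \cup (2I_0 - 1)$.

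The next step is to estimate $\lambda_\delta(I_\delta)$ from below and derive a contradiction with the bound $\lambda_\delta(I_N) < 1$ (valid for all $N$ by the preceding Observation, hence $\lambda_\delta(I_\delta) \leq 1$ by Theorem~\ref{continuityofpressure}). The key point is that $I_0$ satisfies (\ref{combinatoriallydeltaregular}), so by the implication (c1)$\Rightarrow$(c3) of Theorem~\ref{theoremcombinatorialcharacterization} applied with $h = \delta$ (noting the footnote that this implication does not require $\delta = \HD(J_{I_0})$), we get $\sum_{i\in I_0, i>k} i^{-2\delta} \asymp k^{-\delta}$; in particular the series $\sum_{i\in I_0} i^{-2\delta}$ diverges (its tails do not go to zero summably — more precisely, $\sum_{i\in I_0}\left(\tfrac{1}{1+i}\right)^{2\delta}$ diverges since $\sum_{i\in I_0, i>k}(1+i)^{-2\delta}\asymp k^{-\delta}\not\to$ a convergent tail). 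The same holds for $I_+ = 2I_0$ and $I_- = 2I_0 - 1$, so $\sum_{i\in I_+\cup I_-}\left(\tfrac{1}{1+i}\right)^{2\delta} = \infty$. Now apply the lower bound in Lemma~\ref{lemmaKZ} (extended to all subsets of $\N$ by the remark following it): adjoining a finite collection $F \subset I_+\cup I_-$ of large elements to any finite set $I$ increases $\lambda_\delta$ by at least $\sum_{i\in F}\left(\tfrac{1}{1+i}\right)^{2\delta}$. Since $I_\delta$ contains all but finitely many elements of $I_+\cup I_-$ and this series diverges, we can find a finite subset of $I_\delta$ on which $\lambda_\delta$ already exceeds $1$, contradicting $\lambda_\delta(I_\delta)\leq 1$.

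More carefully: fix a large finite $I \subset I_\delta$ with $\lambda_\delta(I) \geq 1 - \varepsilon$ for small $\varepsilon$ (possible since we may at least take $I = I_{N_1-1}$, on which $\lambda_\delta \geq 1 - (2/(2+N_1))^{2\delta}$ is already close to $1$, or simply use that $\lambda_\delta(I_N)$ is nondecreasing in $N$ and approaches $\lambda_\delta(I_\delta)$); then choose finitely many further elements $i_1 < \cdots < i_k$ of $(I_+\cup I_-)\cap I_\delta$, all larger than $\max I$, with $\sum_{j=1}^k (1+i_j)^{-2\delta} > \varepsilon$, which is possible by divergence; Lemma~\ref{lemmaKZ} then forces $\lambda_\delta(I \cup \{i_1,\ldots,i_k\}) \geq \lambda_\delta(I) + \varepsilon > 1$, contradicting the fact that this finite set is contained in some $I_N$ with $\lambda_\delta(I_N) < 1$.

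The main obstacle will be bookkeeping the "$\subset_*$" discrepancies correctly — making sure that the finitely many elements of $I_+ \cup I_-$ that may have been omitted (those $\leq \max(N_0, N_2)$, or those for which adjoining would have violated the $\lambda_\delta < 1$ constraint at an early stage) do not affect the divergence argument. This is harmless because divergence of $\sum_{i\in I_+\cup I_-}(1+i)^{-2\delta}$ is insensitive to removing finitely many terms, so the tail $\sum_{i\in I_\delta\cap(I_+\cup I_-),\, i > M}(1+i)^{-2\delta}$ still diverges for every $M$, which is exactly what the argument needs. A secondary point to verify is that $\lambda_\delta(I_N)$ is indeed nondecreasing in $N$ (immediate from Lemma~\ref{lemmaKZ}, since $\lambda_\delta$ only increases when elements are added), so that $\lambda_\delta(I_\delta) = \lim_N \lambda_\delta(I_N) \leq 1$ is legitimate.
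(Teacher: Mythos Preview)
Your argument contains a fatal error: the series $\sum_{i\in I_+\cup I_-}(1+i)^{-2\delta}$ \emph{converges}. The tail asymptotic you correctly derive, $\sum_{i\in I_0,\,i>k}i^{-2\delta}\asymp k^{-\delta}$, says exactly that the tails tend to zero (since $\delta>0$), hence the full sum is finite; indeed, the paper uses precisely this convergence a few lines earlier when choosing $N_2$. Your phrase ``tails do not go to zero summably'' conflates two unrelated facts: that the sequence of tails $(k^{-\delta})_k$ is itself not summable in $k$ has no bearing on whether the original series converges. Consequently your final step --- choosing $i_1<\cdots<i_k$ in $I_\delta$ beyond $\max I$ with $\sum_j(1+i_j)^{-2\delta}>\varepsilon$ --- is unavailable for arbitrary $\varepsilon>0$, and the argument collapses.

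The paper's proof replaces divergence with a quantitative comparison of orders. It takes $M$ to be the \emph{last} stage at which Case~4 occurs; then $\lambda_\delta(I_{M-1}\cup\{M\})\ge 1$ forces, via Lemma~\ref{lemmaKZ}, the gap $1-\lambda_\delta(I_{M-1})\le\bigl(2/(2+M)\bigr)^{2\delta}\asymp M^{-2\delta}$. Meanwhile all of $I_+\cap\{M+1,M+2,\ldots\}$ is subsequently adjoined, and the lower bound in Lemma~\ref{lemmaKZ} together with the tail estimate $\sum_{i\in I_+,\,i>M}(1+i)^{-2\delta}\asymp M^{-\delta}$ show that these additions raise $\lambda_\delta$ by at least a constant times $M^{-\delta}$. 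Since $M^{-\delta}\gg M^{-2\delta}$ for $M$ large (and $N_1$ was chosen precisely so that $M\ge N_1$ lies in this regime), one obtains $\lambda_\delta>1$, a contradiction. The point your argument misses is that both the gap and the available increment must be controlled by the \emph{same} parameter $M$; pinning the gap to $N_1$ while the guaranteed increments are only available beyond an uncontrolled $N_0\ge N_1$ would not suffice even with the correct (finite) tail asymptotic in hand.
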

\begin{proof}
As $N_1\notin R$, we know that Case 4 occurs at least once, namely at $N = N_1$. If we suppose by contradiction that it occurs only finitely often, then there is some maximal value $M$ at which it occurs. In particular 
\[
\lambda_\delta(I_{M - 1}\cup\{M\})\geq 1,
\]
and applying Lemma~\ref{lemmaKZ} gives
\begin{equation}
\label{toomuchI}
\lambda_\delta(I_{M - 1}) \geq 1 - \left(\frac{2}{2 + M}\right)^{2\delta}\cdot
\end{equation}
On the other hand, by the above observation and by the maximality of $M$ we have
\[
\lambda_\delta(I_{M - 1}\cup(I_+\cap\{M + 1,\ldots,N\})) < 1
\]
for all $N\in\N$. Combining these last two formulas and then applying Lemma~\ref{lemmaKZ}, we see that
\begin{align*}
\left(\frac{2}{2 + M}\right)^{2\delta}
&> \lambda_\delta(I_{M - 1}\cup(I_+\cap\{M + 1,\ldots,N\})) -
\lambda_\delta(I_{M - 1})\\ 
&\geq \sum_{\substack{i = M + 1\\ i\in I_+}}^N \left(\frac{1}{1 + i}\right)^{2\delta}\cdot
\end{align*}
Since $N$ was arbitrary, we can take the limit as $N$ approaches infinity which yields
\begin{equation}
\label{N1def}
M^{-2\delta}
\asymp \left(\frac{2}{2 + M}\right)^{2\delta}
> \sum_{\substack{i = M + 1\\ i\in I_+}}^\infty \left(\frac{1}{1 + i}\right)^{2\delta}\\
\asymp M^{-\delta}.
\end{equation}
Since $\delta > 0$, this is a contradiction if $M$ is sufficiently large. Thus, if we let $N_1$ be large enough so that (\ref{N1def}) cannot hold whenever $M\geq N_1$, then this completes the proof of the claim. 
\end{proof}

Now let 
\[
I = I_\delta = \bigcup_{N\geq N_1 - 1}I_N.
\]
As mentioned earlier, it is clear that $I$ satisfies (\ref{combinatoriallydeltaregular}) since it satisfies (\ref{impliesstar}). Thus to complete the proof of Theorem~\ref{theoremexistence}, it suffices to demonstrate (\ref{deltaequalsh}). To this end, let $(M_k)_k$ be an increasing sequence of points at which Case 4 occurs. For each $k\in\N$, we have (\ref{toomuchI}) with $M = M_k$ i.e. 
\[
1 - \left(\frac{2}{2 + M_k}\right)^{2\delta} \leq \lambda_\delta(I_{M_k - 1}) < 1.
\]
Taking the limit as $k$ approaches infinity, we see that $\lambda_\delta(I) = 1$. Thus by Bowen's formula (Theorem~\ref{BowenFormula}), we have $\HD(J_I) = \delta$. This completes the proof.

\end{document}